\newcommand{\ad}{\textrm{\normalfont ad}}
\newcommand{\Ad}{\textrm{\normalfont Ad}}
\newtheorem{theorem}{Theorem}[section]
\newtheorem{definition}{Definition}[section]
\newtheorem{example}{Example}[section]
\newtheorem{proposition}{Proposition}[section]
\newtheorem{lemma}{Lemma}[section]
\newtheorem{corollary}{Corollary}[section]
\newtheorem{remark}{Remark}[section]
\begin{document}

\title[]{Higher-order Euler--Poincaré field equations}

\author{Marco Castrillón López}
\email{mcastri@mat.ucm.es}
\address{Facultad de Ciencias Matemáticas, Universidad Complutense de Madrid, Plaza de las Ciencias, 3, Madrid, 28040, Madrid, Spain}

\author{Álvaro Rodríguez Abella}
\address{Department of Mathematics and Computer Science, Saint Louis University (Madrid Campus), Avenida del Valle, 34, Madrid, 28003, Madrid, Spain}
\address{Instituto de Ciencias Matem\'aticas (CSIC--UAM--UC3M--UCM), Calle Nicol\'as Cabrera, 13--15, Madrid, 28049, Spain}
\email{alvrod06@ucm.es}


\begin{abstract}
We develop a reduction theory for $G$-invariant Lagrangian field theories defined on the higher-order jet bundle of a principal $G$-bundle, thus obtaining the higher-order Euler--Poincaré field equations. To that end, we transfer the Hamilton's principle to the reduced configuration bundle, which is identified with the bundle of flat connections (up to a certain order) of the principal $G$-bundle. As a result, the reconstruction condition is always satisfied and, hence, every solution of the reduced field equations locally comes from a solution of the original (unreduced) equations. Furthermore, the reduced equations are shown to be equivalent to the conservation of the Noether current. Lastly, we illustrate the theory by investigating multivariate higher-order splines on Lie groups.
\end{abstract}

\keywords{Euler--Poincaré equations, higher-order field theory, Lagrangian density, multivariate splines, Noether theorem} 
\subjclass{70S05, 70S10, 70H50 (Primary), 53C05, 58E15 (Secondary).}

\maketitle


\section{Introduction}

In the realm of geometric mechanics, reduction by symmetries involves dropping the dynamical equations of a system with symmetry to a lower-dimensional space, which is the family of group orbits of the symmetry action. The modern approach of this procedure was first introduced in \cite{Ar1966,Sm1970,Me1973,MaWe1974}, and for Lagrangian theories in \cite{MaSh1993,CeMaRa2001,CeMaRa2001b}. For the latter, the key idea is to transfer the variational principle, typically the Hamilton's principle, to the reduced configuration space. This yields the reduced equations when applied to the reduced Lagrangian. 

The reduction results in mechanics for first-order sistems, i.e., those whose Lagrangians depend only on the generalized positions and their velocities, have been successfully extended to field theories~\cite{CaMa2008,GaRa2014}, including covariant field theories~\cite{CaGaRa2001,CaRa2003,ElGaHoRa2011,CaGaRo2013}, where the configuration manifold is replaced by a fiber bundle whose base space models the spacetime. The key case for reduction in both Mechanics and field theories is Euler--Poincaré reduction, where the variational variables take values in a Lie group $G$ or are local sections of a $G$-principal bundle, and $G$ is at the same time the group of symmetries.

Lagrangian reduction in Mechanics (and, in particular, Euler--Poincaré) has been extended to higher-order systems (see for example \cite{GaHoRa2011,GaHoMeRaVi2012,GaHoMeRaVi2012b}). However, there is not a counterpart in field theory yet, despite the fact that there are interesting systems and situations that fit in this context. 

This work addresses higher-order Euler--Poincaré reduction, i.e., reduction for higher-order Lagrangian field theories whose configuration bundle is a principal $G$-bundle and the group of symmetries is $G$. We thus extend the results in \cite{CaGaRa2001} to higher-order Lagrangian field theories. The analysis of  multivariate splines on Lie groups is given at the end of the article to illustrate the main results of it.

Reduction theory has also been analyzed in the discrete setting for both mechanics~\cite{BoSu1999a,MaPeSh1999,MaPeSh2000,BlCoJi2019,RoLe2023} and field theory~\cite{Va2007}. A discretization of the reduction theory introduced here would be desirable in order to build variational integrators for the reduced equations. We leave this as a future work.

The paper is organised as follows. Firstly, some essential facts about higher-order jet bundles are recalled in Section \ref{sec:higherorderjets}, including the total partial derivatives and the jet prolongation of vector fields, and the formulation of higher-order Lagrangian field theories is recalled in \ref{sec:calculus}. Next, in Section \ref{sec:reducedspace} the geometry of the reduced configuration space $(J^kP)/G$ for a $k$-th order Lagrangian theory on a principal bundle is investigated. The understanding of this reduced phase bundle is essential for a correct analysis of the reduced variational principle. For first order theories, this reduced bundle is the bundle of connections $C(P)$, the sections of which are principal connections on $P$. However, even though one might suspect that $(J^kP)/G\simeq J^{k-1}C(P)$, $k\geq 2$, it turns out that this reduced phase bundle is the space of $(k-1)$-th order pointwise principal connections with vanishing curvature up to $k-2$ order. This fact is already shown for particular cases in \cite{Jan04} and is analyzed in depth in \cite{EtMu1994}. We follow the strategy of the latter. The main result of the paper is established in Section \ref{sec:redeqs}, where we compute the reduced equations, i.e., the higher-order Euler--Poincaré field equations. For $k\geq 2$ the reconstruction process providing solutions of the original problem from solution of the reduced equations is given in detail (at least locally). In this case, the situation $k=1$ and $k\geq 2$ show a different conceptual behaviour, since the first order case requires an auxiliary compatibility condition, whereas that condition is implicit in the set of admissible solution for higher-order Lagrangians. Furthermore, the reduced equations are reinterpreted as a Noether conservation law in Section \ref{sec:noether}. At last, Section \ref{sec:splines} is devoted to illustrate the previous theory by computing the reduced equations for multivariate $k$-splines (brane splines) on Lie groups.

In the following, every manifold or map is assumed to be smooth, meaning $C^\infty$, unless otherwise stated. In addition, every fiber bundle $\pi_{Y,X}:Y\rightarrow X$ is assumed to be locally trivial and is denoted by $\pi_{Y,X}$. Given $x\in X$, $Y_x=\pi_{Y,X}^{-1}(\{x\})$ denotes the fiber over $x$. We assume that $\dim X=n$ and $\dim Y_x=m$. The space of (smooth) global sections of $\pi_{Y,X}$ is denoted by $\Gamma(\pi_{Y,X})$. In particular, vector fields on a manifold $X$ are denoted by $\mathfrak X(X)=\Gamma(\pi_{TX,X})$, where $TX$ is the tangent bundle of $X$. Likewise, $k$-forms on $X$ are denoted by $\Omega^k(X)=\Gamma(\pi_{T^*X,X})$, where $T^*X$ is the cotangent bundle of $X$. The space of local sections is denoted by $\Gamma_{loc}(\pi_{Y,X})$, and the same notation stands for local vector fields and forms. In the same vein, given an open set $\mathcal U\subset X$, the family of sections of $\pi_{Y,X}$ defined on $\mathcal U$ is denoted by $\Gamma\left(\mathcal U,\pi_{Y,X}\right)$, and analogous for the other spaces. The tangent map of a map $f\in C^\infty(X,X')$ between the manifolds $X$ and $X'$ is denoted by $(df)_x:T_xX\rightarrow T_{f(x)}X'$ for each $x\in X$. In the same vein, the pull-back of $\alpha\in\Omega^k(X')$ is denoted by $f^*\alpha\in\Omega^k(X)$ and its exterior derivative is denoted by ${\rm d}\alpha\in\Omega^{k+1}(X')$. When working in local coordinates, indices will be denoted by lowercase letters ($\mu$, $\alpha$, etc.), and multi-indices by capital letters ($J=(J_1,\dots,J_n)\in \mathbb{N}^n$, etc.). Given two multi-indices $J=(J_1,\dots,J_n),I=(I_1,\dots,I_n)\in \mathbb{N}^n$, we write
\begin{equation*}
J!=J_1!\dots J_n!,\qquad\binom{J}{I}=\binom{J_1}{I_1}\dots\binom{J_n}{I_n},
\end{equation*}
Besides, we will assume the Einstein summation convention for repeated (multi-)indices.

\section{Higher-order jet bundles}\label{sec:higherorderjets}

We summarize the main results about higher-order jet bundles that we will need in the following (see, for example, \cite[Chapter 6]{Sa1989}). Let $\pi_{Y,X}:Y\to X$ be a fiber bundle and $k\in\mathbb Z^+$. The $k$-th order jet bundle of $\pi_{Y,X}$ is denoted by
\begin{equation*}
\pi_{J^k Y,X}:J^k Y\longrightarrow X,    
\end{equation*}
and its elements by $j_x^k s$. The $k$-th jet lift of a section $s\in\Gamma(\pi_{Y,X})$ is denoted by $j^k s\in\Gamma\left(\pi_{J^k Y,X}\right)$. Recall that the maps $\pi_{k,l}:J^k Y\to J^l Y$, $j^k_xs\mapsto j^l_xs$, $0\leq l<k$, are fiber bundles, where we denote $J^0 Y=Y$. In addition, $\pi_{k,k-1}$ is an affine bundle modelled on
\begin{equation*}
\pi_{J^{k-1}Y,X}^*\left(\textstyle\bigvee^k T^*X\right)\otimes\pi_{k-1,0}^*(VY),
\end{equation*}
where $VY=\ker(\pi_{Y,X})_*$ is the vertical bundle of $\pi_{Y,X}$.

Let $(x^\mu,y^\alpha)$ be bundle coordinates for $\pi_{Y,X}$. The induced coordinates on $J^k Y$ are $(x^\mu,y^\alpha;y_J^\alpha)$, $1\leq|J|\leq k$, where $|J|=J_1+\dots+J_n$ is the length of $J$.  If, locally, $s(x^\mu)=\left(x^\mu,s^\alpha(x^\mu)\right)$ for some (local) functions $s^\alpha\in C^\infty(X)$, $1\leq\alpha\leq m$, then $j^k s(x^\mu)=\left(x^\mu,s^\alpha(x^\mu);s_J^\alpha(x^\mu)\right)$ with
\begin{equation*}
s_J^\alpha(x^\mu)=\left.\frac{\partial^{|J|}}{\partial x^J}\right|_{x=(x^\mu)}s^\alpha(x)=\left.\left(\frac{\partial}{\partial x^1}\right)^{J_1}\dots\left(\frac{\partial}{\partial x^n}\right)^{J_n}\right|_{x=(x^\mu)}s^\alpha(x),\qquad 1\leq|J|\leq k.
\end{equation*}

\begin{definition}
Let $\pi_{Y,X}$ and $\pi_{Y',X'}$ be fiber bundles, $k\in\mathbb Z^+$, and $F:Y\to Y'$ be a bundle morphism covering a diffeomorphism $f:X\to X'$. The \emph{$k$-th order jet lift of $F$} is the map $j^kF:J^k Y\to J^k Y'$ defined as $j_x^k s\mapsto j_{f(x)}^k\left(F\circ s\circ f^{-1}\right)$.
\end{definition}

If $\pi_{Y'',X''}:Y''\to X''$ is another fiber bundle and $G:Y'\to Y''$ is bundle morphism covering a diffeomorphism $g:X'\to X''$, then $j^k(G\circ F)=j^k G\circ j^k F$. In other words, the following diagram is commutative.
\begin{equation}\label{eq:commutativityjetlift}
\begin{tikzpicture}
\matrix (m) [matrix of math nodes,row sep=2em,column sep=4em,minimum width=2em]
{	J^k Y & J^k Y' & J^kY''\\
	 Y & Y'& Y''\\
	 X & X' & X''\\};
\path[-stealth]
(m-1-1) edge [] node [above] {$j^k F$} (m-1-2)
(m-1-1) edge [] node [left] {$\pi_{k,0}$} (m-2-1)
(m-1-2) edge [] node [right] {$\pi_{k,0}'$} (m-2-2)
(m-2-1) edge [] node [above] {$F$} (m-2-2)
(m-2-1) edge [] node [left] {$\pi_{Y,X}$} (m-3-1)
(m-2-2) edge [] node [right] {$\pi_{Y',X'}$} (m-3-2)
(m-3-1) edge [] node [above] {$f$} (m-3-2)
(m-1-2) edge [] node [above] {$j^k G$} (m-1-3)
(m-2-2) edge [] node [above] {$G$} (m-2-3)
(m-3-2) edge [] node [above] {$g$} (m-3-3)
(m-1-3) edge [] node [right] {$\pi_{k,0}''$} (m-2-3)
(m-2-3) edge [] node [right] {$\pi_{Y'',X''}$} (m-3-3)
(m-1-1) edge [bend left] node [above] {$j^k(F\circ G)$} (m-1-3);
\end{tikzpicture}    
\end{equation}

\begin{proposition}
The following is a canonical decomposition of vector bundles over $J^k Y$,
\begin{equation*}
\pi_{k+1,k}^*\left(T(J^k Y)\right)=\pi_{k+1,k}^*\left(V(J^k Y)\right)\oplus H(\pi_{k+1,k}),
\end{equation*}
where $V(J^k Y)=\ker(\pi_{J^k Y,X})_*$ is the vertical bundle of $\pi_{J^k Y,X}$ and $H(\pi_{k+1,k})_{j_x^{k+1}s}=d(j^k s)_x(T_x X)$ for each $j_x^{k+1}s\in J^{k+1}Y$. 
\end{proposition}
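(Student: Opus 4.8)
The plan is to establish the decomposition pointwise over each $j_x^{k+1}s \in J^{k+1}Y$ and then check smoothness in the fibered coordinates, following the classical pattern for the contact/horizontal splitting on first-order jet bundles (which is the case $k=0$ here) and extending it one level up.

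First I would fix $j_x^{k+1}s \in J^{k+1}Y$ and its projection $j_x^k s = \pi_{k+1,k}(j_x^{k+1}s) \in J^k Y$, and consider the fibre $T_{j_x^k s}(J^k Y)$. The subspace $d(j^k s)_x(T_x X)$ is well defined: although $j^k s$ is only a local section near $x$, its $1$-jet at $x$ — hence the image of its tangent map at $x$ — depends only on $j_x^{k+1}s$, because the coordinates $s^\alpha_J(x)$ for $|J| \le k$ together with the first derivatives of the top-order ones $s^\alpha_J(x)$ with $|J|=k$ (which are the $s^\alpha_{J+1_\mu}(x)$ with $|J+1_\mu| = k+1$) are exactly the data encoded in $j_x^{k+1}s$. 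So I would define $H(\pi_{k+1,k})_{j_x^{k+1}s} := d(j^k s)_x(T_x X) \subset T_{j_x^k s}(J^k Y)$ and note it is $n$-dimensional, since $(\pi_{J^k Y,X})_* \circ d(j^k s)_x = \mathrm{id}_{T_x X}$; this same identity shows $H(\pi_{k+1,k})_{j_x^{k+1}s} \cap V_{j_x^k s}(J^k Y) = 0$. A dimension count, $\dim T(J^k Y)_x = n + \dim(J^k Y)_x = \dim V(J^k Y)_x + n$, then gives the direct sum decomposition fibrewise.

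Next I would verify that $H(\pi_{k+1,k})$ is a smooth subbundle. In adapted coordinates $(x^\mu, y^\alpha; y^\alpha_J)_{1 \le |J| \le k}$ on $J^k Y$ and the induced coordinates on $J^{k+1}Y$, the horizontal subspace at $j_x^{k+1}s$ is spanned by the $n$ vectors that one conventionally writes as the total derivative operators
\begin{equation*}
\frac{d}{dx^\mu}\bigg|_{j_x^{k+1}s} = \frac{\partial}{\partial x^\mu} + y^\alpha_{1_\mu}\frac{\partial}{\partial y^\alpha} + \sum_{1 \le |J| \le k} y^\alpha_{J+1_\mu}\frac{\partial}{\partial y^\alpha_J},
\end{equation*}
evaluated at $j_x^{k+1}s$, where $1_\mu$ denotes the multi-index with a $1$ in slot $\mu$. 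These depend smoothly on $j_x^{k+1}s$ and are everywhere linearly independent (their $T_xX$-projections are the coordinate basis), so they frame a rank-$n$ subbundle of $\pi_{k+1,k}^*(T(J^kY))$ complementary to the pull-back of $V(J^kY)$, which in these coordinates is framed by $\partial/\partial y^\alpha$ and $\partial/\partial y^\alpha_J$, $1 \le |J| \le k$. Canonicity follows because the defining prescription $H(\pi_{k+1,k})_{j_x^{k+1}s} = d(j^k s)_x(T_xX)$ makes no reference to coordinates; equivalently, one checks compatibility with the jet lift $j^k F$ of Definition~1.1 using the commutativity of diagram~\eqref{eq:commutativityjetlift}.

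The only genuinely delicate point is the well-definedness of $d(j^k s)_x(T_x X)$ as a function of $j_x^{k+1}s$ alone — i.e.\ that it does not see higher jets of $s$. This is where one must be careful: two local sections with the same $(k+1)$-jet at $x$ have $k$-jet lifts that agree to first order at $x$ (their coordinate expressions $x \mapsto s^\alpha_J(x)$, $|J|\le k$, have the same value and same first derivatives at $x$), so their tangent maps at $x$ coincide; hence the prescription descends to $J^{k+1}Y$. Everything else is a routine dimension count and a coordinate check; I would present the coordinate frame above as the concrete witness and keep the intrinsic argument for canonicity.
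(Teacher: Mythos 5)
Your proof is correct. The paper itself states this proposition without proof, recalling it as standard material from \cite[Chapter 6]{Sa1989}, so there is no in-paper argument to compare against; your argument --- well-definedness of $d(j^k s)_x(T_xX)$ as a function of $j_x^{k+1}s$ alone, transversality to the vertical via $(\pi_{J^kY,X})_*\circ d(j^ks)_x=\mathrm{id}_{T_xX}$, a dimension count, and the total-derivative frame for smoothness --- is exactly the standard one, and you correctly identify the one delicate point (that the splitting lives over $J^{k+1}Y$, not $J^kY$, precisely because the first derivatives of the top-order coordinates require the $(k+1)$-jet).
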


Given $j_x^{k+1} s\in J^{k+1}Y$ and $U_x\in T_x X$, the vector $U_{j_x^{k+1}s}^k=d(j^k s)_x(U_x)\in H(\pi_{k+1,k})$ is called \emph{$k$-th holonomic lift of $U_x$ by $j_x^{k+1}s$}. In coordinates, if we write $j_x^{k+1}s=\left(x^\mu,y^\alpha;y_J^\alpha\right)$ and $U_x=U^\mu\partial_\mu$, then
\begin{equation*}
U_{j_x^{k+1}s}^k=U^\mu\left(\partial_\mu+y_{1_\mu}^\alpha\partial_\alpha+\sum_{|J|=1}^k y_{J+1_\mu}^\alpha \partial_\alpha^J\right),
\end{equation*}
where $1_\mu$ is the multi-index given by $(1_\mu)_\nu=\delta_{\mu\nu}$, and $\partial_\mu$, $\partial_\alpha$ and $\partial_\alpha^J$ are the partial derivatives of the coordinates $x^\mu$, $y^\alpha$, $y_J^\alpha$, respectively, $1\leq\mu,\nu\leq n$, $1\leq\alpha\leq m$, $1\leq|J|\leq k$. In particular, the $k$-th holonomic lifts of the partial vector fields, $\partial_\mu$, $1\leq\mu\leq n$, are called \emph{coordinate total derivatives} and are given by
\begin{equation*}
\frac{d}{dx^\mu}=(\partial_\mu)_{j_x^{k+1}s}^k=\partial_\mu+y_{1_\mu}^\alpha\partial_\alpha+y_{J+1_\mu}^\alpha\partial_\alpha^J.
\end{equation*}

\begin{example}[Total time derivative]
Let $Q$ be a smooth manifold and consider the trivial bundle $\pi_{\mathbb R\times Q,\mathbb R}$ with (local) coordinates $(t,q^\alpha)$. Given $q(t)=\left(t,q^\alpha(t)\right)$, the \emph{total time derivative} is given by
\begin{equation*}
\frac{d}{dt}=\frac{\partial}{\partial t}+\frac{\partial q^\alpha}{\partial t}\partial_\alpha+\sum_{j=1}^k \frac{\partial^{j+1} q^\alpha}{\partial t^{j+1}}\partial_\alpha^j
\end{equation*}
\end{example}

Note that given a (local) function $f\in C^\infty(J^k Y)$ and a multi-index $J=(J^1,\dots,J^n)$, then
\begin{equation*}
\frac{d^{|J|}f}{dx^J}\in C^\infty(J^{k+|J|}Y).
\end{equation*}
Furthermore, for a section $s\in\Gamma(\pi_{Y,X})$ we have
\begin{equation}\label{eq:totalderivativesection}
\frac{d^{|J|}f}{dx^J}\circ j^{k+|J|}s=\frac{\partial^{|J|}\left(f\circ j^k s\right)}{\partial x^J}.
\end{equation}

Next, we define the prolongation of vector fields.

\begin{definition}
Let $l\in\mathbb Z^+$. A \emph{generalized vector field} on $J^lY$ is a section $U\in\Gamma\left(\pi_{l,0}^*TY\to J^l Y\right)$. Furthermore, it is said to be vertical if $U(j_x^l s)\in V_{s(x)}Y$ for each $j_x^l s\in J^l Y$.
\end{definition}

Note that (standard) vector fields on $Y$ would be  regarded as a generalized vector fields for $l=0$. Locally, generalized vector fields are given by
\begin{equation*}
U=U^\mu\partial_\mu+U^\alpha\partial_\alpha,\qquad U^\mu,U^\alpha:J^l Y\to\mathbb R,~1\leq\mu\leq n,~1\leq\alpha\leq m.
\end{equation*}
This way, vertical generalized vector fields are those ones such that $U^\mu=0$ for $1\leq\mu\leq n$.

Given a generalized vector field, $U\in\Gamma\left(\pi_{l,0}^*TY\to J^l Y\right)$, and $k\in\mathbb Z^+$, it may be defined the \emph{$k$-th order prolongation of $U$}, which is a section $U^{(k)}\in\Gamma\left(\pi_{k+l,k}^*\left(T\left(J^k Y\right)\right)\to J^{k+l}Y\right)$ (cf. \cite[Definition 6.4.16]{Sa1989}). In particular, $k$-the prolongations of (standard) vector fields on $Y$ are (standard) vector fields on $J^k Y$.

\begin{proposition}\label{prop:jetprolongationvector}
Let $k\in\mathbb Z^+$, $(x^\mu,y^\alpha;y_J^\alpha)$ be bundle coordinates for $J^k Y$ and $V=V^\alpha\partial_\alpha\in\mathfrak X(Y)$ be a vertical vector field on $Y$, where $V^\alpha:Y\to\mathbb R$, $1\leq\alpha\leq m$. Then $k$-th order prolongation of $V$ is given by
\begin{equation*}
V^{(k)}=V^\alpha\partial_\alpha+\frac{d^{|J|}V^\alpha}{dx^J}\partial_\alpha^J\in\mathfrak X\left(J^k Y\right).
\end{equation*}
\end{proposition}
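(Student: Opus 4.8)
The plan is to prove Proposition \ref{prop:jetprolongationvector} by reducing the computation of the $k$-th order prolongation $V^{(k)}$ to the flow of $V$ and the naturality of the jet lift established in the diagram \eqref{eq:commutativityjetlift}. Recall that for a standard vector field, the prolongation $V^{(k)}$ is the infinitesimal generator of the one-parameter group of bundle automorphisms $j^k\phi_\varepsilon$ of $J^k Y$, where $\{\phi_\varepsilon\}$ is the flow of $V$ on $Y$. Since $V=V^\alpha\partial_\alpha$ is vertical, each $\phi_\varepsilon$ covers the identity on $X$, so $j^k\phi_\varepsilon$ also covers the identity on $X$ and $V^{(k)}$ is a vertical vector field on $J^kY$ (confirming the claimed form has no $\partial_\mu$ component and no $\partial_\alpha$ component beyond the one displayed — more precisely, the coefficient of $\partial_\alpha$ must be $V^\alpha$ and the coefficients of $\partial_\alpha^J$ remain to be identified).

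First I would fix bundle coordinates and write $\phi_\varepsilon(x^\mu,y^\beta)=(x^\mu, \psi^\beta(\varepsilon,x,y))$ with $\partial_\varepsilon\psi^\beta|_{\varepsilon=0}=V^\beta(x,y)$ and $\psi^\beta(0,x,y)=y^\beta$. For a local section $s$ with components $s^\alpha(x)$, the pushed-forward section $\phi_\varepsilon\circ s$ has components $\psi^\alpha(\varepsilon,x,s(x))$, and so by definition of the induced coordinates on $J^kY$, the $J$-component of $j^k(\phi_\varepsilon\circ s)$ at $x$ is $\partial^{|J|}/\partial x^J\big(\psi^\alpha(\varepsilon,x,s(x))\big)$. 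Differentiating in $\varepsilon$ at $\varepsilon=0$ and using that $\partial/\partial\varepsilon$ commutes with $\partial/\partial x^J$ gives the $y_J^\alpha$-component of $V^{(k)}$ evaluated along $j^{k}s$ as $\partial^{|J|}/\partial x^J\big(V^\alpha(x,s(x))\big)$. Now I invoke the total-derivative identity \eqref{eq:totalderivativesection}, applied to the function $V^\alpha\in C^\infty(Y)\subset C^\infty(J^0Y)$: it says precisely that $\partial^{|J|}(V^\alpha\circ j^0 s)/\partial x^J = (d^{|J|}V^\alpha/dx^J)\circ j^{|J|}s$. Since this holds for every local section $s$ and the holonomic lifts $d(j^ks)_x(T_xX)$ together with the vertical directions span $T(J^kY)$ (Proposition on the canonical decomposition), we conclude the coefficient of $\partial_\alpha^J$ in $V^{(k)}$ is exactly $d^{|J|}V^\alpha/dx^J$, which is a function on $J^{|J|}Y\subset J^kY$ for $|J|\le k$, so $V^{(k)}\in\mathfrak X(J^kY)$ as claimed.

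The one genuine subtlety — and the step I expect to be the main obstacle to write cleanly — is justifying that matching the two vertical vector fields $V^{(k)}$ and $V^\alpha\partial_\alpha + (d^{|J|}V^\alpha/dx^J)\partial_\alpha^J$ along all holonomic jets $j^k s(x)$ suffices to identify them as vector fields on all of $J^kY$. This is where one uses that every point of $J^kY$ is of the form $j^k_x s$ for some local section $s$ (so the two fields agree pointwise everywhere) together with the fact that both are vertical and hence determined by their action on the fiber coordinates $y^\alpha, y^\alpha_J$; alternatively, one can cite directly \cite[Definition 6.4.16]{Sa1989} and simply verify that the proposed expression transforms correctly and restricts correctly under $\pi_{k,k-1}$ by induction on $k$, the inductive step being exactly the observation that $d^{|J+1_\mu|}V^\alpha/dx^{J+1_\mu} = (d/dx^\mu)(d^{|J|}V^\alpha/dx^J)$. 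Either route is routine once the flow computation above is in place; I would present the flow argument as the main line and relegate the well-definedness remark to a single sentence.
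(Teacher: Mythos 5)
Your argument is correct, but note that the paper does not actually prove this statement: it is recalled as a standard fact from \cite[Chapter 6]{Sa1989} (the definition of the prolongation being \cite[Definition 6.4.16]{Sa1989}), so there is no in-paper proof to compare against. Your flow-based derivation is the classical one: take the flow $\phi_\varepsilon$ of the vertical field $V$, observe that $j^k\phi_\varepsilon$ covers the identity, compute the $y^\alpha_J$-coordinate of $j^k_x(\phi_\varepsilon\circ s)$ as $\partial^{|J|}_x\psi^\alpha(\varepsilon,x,s(x))$, differentiate in $\varepsilon$ at $0$, and convert $\partial^{|J|}(V^\alpha\circ s)/\partial x^J$ into $(d^{|J|}V^\alpha/dx^J)\circ j^{|J|}s$ via \eqref{eq:totalderivativesection}. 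The resulting value at $j^k_xs$ depends only on $j^{|J|}_xs$ (chain rule), hence only on the point of $J^kY$, and since every point of $J^kY$ is a holonomic jet $j^k_xs$, the two vector fields agree everywhere. That last observation is the correct justification; the earlier sentence about holonomic lifts together with vertical directions spanning $T(J^kY)$ is a red herring and can be deleted --- you are matching values of vector fields at points, not pairings against a spanning set. Two small caveats worth flagging: (1) you start from the characterization of $V^{(k)}$ as the generator of $j^k\phi_\varepsilon$, which in the paper is Proposition \ref{prop:prolongationflow}, stated \emph{after} the proposition you are proving; this is not circular provided you take the flow characterization as the definition for ordinary projectable fields (or derive it independently from \cite[Definition 6.4.16]{Sa1989}), but you should say so explicitly. (2) The coefficient $d^{|J|}V^\alpha/dx^J$ is a function on $J^{|J|}Y$ pulled back to $J^kY$ along $\pi_{k,|J|}$, not a function on a subset $J^{|J|}Y\subset J^kY$. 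With those cosmetic repairs the proof is complete and standard.
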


\begin{proposition}\label{prop:prolongationflow}
Let $k\in\mathbb Z^+$ and $U\in\mathfrak X(Y)$ be a $\pi_{Y,X}$-projectable vector field on $Y$. Then the flow of its $k$-th order prolongation, $U^{(k)}\in\mathfrak X\left(J^k Y\right)$, is $\left\{j^k\phi_t\mid t\in(-\epsilon,\epsilon)\right\}$, where $\{\phi_t\mid t\in(-\epsilon,\epsilon)\}$ is the flow of $U$.
\end{proposition}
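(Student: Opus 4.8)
The plan is to prove Proposition~\ref{prop:prolongationflow} by showing that the family $\{j^k\phi_t\}$ is a one-parameter group of diffeomorphisms of $J^kY$ whose infinitesimal generator coincides with $U^{(k)}$; by uniqueness of flows this identifies it as the flow of $U^{(k)}$. First I would check that $\{j^k\phi_t\}$ is indeed a flow on $J^kY$. Since $U$ is $\pi_{Y,X}$-projectable, each $\phi_t$ is a bundle morphism of $\pi_{Y,X}$ covering the flow $\{\psi_t\}$ of the projected vector field on $X$; in particular each $\psi_t$ is a (local) diffeomorphism of $X$, so the jet lift $j^k\phi_t$ from the first definition of the excerpt is well-defined. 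The group property $j^k\phi_{t+s}=j^k\phi_t\circ j^k\phi_s$ and $j^k\phi_0=\mathrm{id}$ follow from functoriality of the jet lift, i.e.\ the commutative diagram~\eqref{eq:commutativityjetlift} applied to $\phi_{t+s}=\phi_t\circ\phi_s$, together with $j^k\mathrm{id}=\mathrm{id}$. Smoothness in $t$ is inherited from smoothness of the flow of $U$.

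Next I would compute the infinitesimal generator. Take $j_x^k s\in J^kY$ and differentiate $t\mapsto j^k\phi_t(j_x^k s)=j_{\psi_t(x)}^k(\phi_t\circ s\circ\psi_t^{-1})$ at $t=0$. The cleanest route is to reduce to the vertical case: writing $U=U^V + U^H$ where $U^H$ is (a projectable vector field projecting onto the same base field and whose flow one can take tangent to sections is not quite available, so) — more precisely, I would first treat $U$ vertical, where $\psi_t=\mathrm{id}$ and the curve is $t\mapsto j_x^k(\phi_t\circ s)$. Then in coordinates, with $\phi_t(x^\mu,y^\alpha)=(x^\mu,y^\alpha+tV^\alpha+o(t))$, the section $\phi_t\circ s$ has components $s^\alpha(x)+tV^\alpha(x,s(x))+o(t)$, so its $k$-jet has components $s_J^\alpha(x)+t\,\partial^{|J|}\big(V^\alpha\circ s\big)/\partial x^J+o(t)$. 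By~\eqref{eq:totalderivativesection} the coefficient of $t$ in the $y_J^\alpha$-slot is exactly $\big(d^{|J|}V^\alpha/dx^J\big)(j_x^k s)$, which matches the formula for $V^{(k)}$ in Proposition~\ref{prop:jetprolongationvector}. For general projectable $U$, I would either invoke the definition of prolongation of a vector field via~\cite[Definition 6.4.16]{Sa1989} directly (the prolongation is \emph{defined} so that it is $\pi_{k+l,k}$-related appropriately, and in the standard-vector-field case $l=0$ the infinitesimal-generator characterization is the definition), or decompose $U$ locally and handle the base part by noting that $j^k$ commutes with composition, so that the contribution of the base flow is the jet lift of the base flow acting by pullback of the jet coordinates in the standard way.

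The main obstacle is making the general (non-vertical) case clean without simply quoting the definition of $U^{(k)}$: one has to be careful that $\phi_t\circ s\circ\psi_t^{-1}$ is only a \emph{local} section near $\psi_t(x)$, that the $\psi_t^{-1}$ introduces base-coordinate reparametrizations whose derivatives contribute to the jet coordinates via the chain rule, and that all of this assembles into the Lie-derivative/total-derivative expression. A way to sidestep this is: establish the vertical case by the explicit computation above; observe that both $\{j^k\phi_t\}$ and the flow of $U^{(k)}$ are $\pi_{k,0}$-related to $\{\phi_t\}$ (the former by functoriality, the latter because $U^{(k)}$ is $\pi_{k,0}$-related to $U$ by construction of the prolongation); and then note that the difference of the two generators is a $\pi_{k,0}$-vertical generalized vector field, reducing the identification to the vertical computation fiberwise along $\pi_{k,0}$. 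I expect the bookkeeping of multi-indices in the chain-rule step to be the only genuinely technical point, and it is routine once~\eqref{eq:totalderivativesection} is in hand.
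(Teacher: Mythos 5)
First, a point of reference: the paper itself gives no proof of Proposition \ref{prop:prolongationflow} --- it is recalled as a standard fact from \cite[Chapter 6]{Sa1989} --- so there is no in-paper argument to compare yours against; I evaluate it on its own merits. Your skeleton (show that $\{j^k\phi_t\}$ is a local one-parameter group via the functoriality \eqref{eq:commutativityjetlift}, then identify its infinitesimal generator with $U^{(k)}$) is the standard and correct route, and your treatment of the vertical case is complete: the expansion $\Phi_t^\alpha(x,s(x))=s^\alpha(x)+tV^\alpha(x,s(x))+o(t)$, differentiation under $\partial^{|J|}/\partial x^J$ (justified by joint smoothness of the flow in $(t,x)$), and \eqref{eq:totalderivativesection} give exactly the components of Proposition \ref{prop:jetprolongationvector}. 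Since the paper only ever applies the proposition to vertical fields (the field $V$ in Proposition \ref{prop:actionvariation} and the infinitesimal generators $\xi^*$ in Section \ref{sec:noether}), this part already covers every use made of the statement.

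The gap is in the general projectable case. Your proposed reduction --- both generators are $\pi_{k,0}$-related to $U$, hence their difference is $\pi_{k,0}$-vertical, ``reducing the identification to the vertical computation fiberwise'' --- does not close the argument. $\pi_{k,0}$-verticality of the difference only says the two vector fields on $J^kY$ agree in their $\partial_\mu$- and $\partial_\alpha$-components; it gives no information about the components along $\partial_\alpha^J$ for $|J|\geq 1$, which is precisely where the content lies. Moreover, the vertical computation you carried out concerns prolongations of $\pi_{Y,X}$-vertical fields on $Y$, not $\pi_{k,0}$-vertical fields on $J^kY$, so it cannot be invoked ``fiberwise along $\pi_{k,0}$.'' The honest computation in the general case is $\left.\partial_t\right|_{t=0}\,j^k_{\psi_t(x)}\bigl(\phi_t\circ s\circ\psi_t^{-1}\bigr)$, where the reparametrization by $\psi_t^{-1}$ enters through the chain rule and yields the characteristic-form formula $U^{(k)}=U^\mu\partial_\mu+U^\alpha\partial_\alpha+\bigl(\frac{d^{|J|}}{dx^J}(U^\alpha-y^\alpha_{1_\mu}U^\mu)+y^\alpha_{J+1_\mu}U^\mu\bigr)\partial_\alpha^J$; this requires more than \eqref{eq:totalderivativesection}, and one must then verify agreement with the cited \cite[Definition 6.4.16]{Sa1989}. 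Your alternative of ``invoking the definition directly'' works only if that definition is literally the generator of the prolonged flow, which for generalized vector fields (where no flow exists) it cannot be, so the consistency check cannot be waved away. Either carry out the chain-rule computation, or explicitly restrict the claim to the vertical case that the paper actually uses.
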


To conclude this brief overview, we present the Leibniz rule for higher-order, multivariable calculus, which can be straightforwardly proven by induction in the number of factors (see \cite[Proposition 6]{Ha2006} for two factors).

\begin{lemma}[Higher-order Leibniz rule]\label{lemma:higherleibniz}
Let $m\in\mathbb Z^+$ and $f_1,\dots,f_m\in C^\infty(\mathbb R^n)$. Then
\begin{equation*}
\frac{\partial^{|I|}}{\partial x^I}\prod_{\alpha=1}^m f_\alpha=\sum_{I^{(1)}+\dots+I^{(m)}=I}\frac{I!}{I^{(1)}!\dots I^{(m)}!}\prod_{\alpha=1}^m\frac{\partial^{|I^{(\alpha)}|}f_\alpha}{\partial x^{I^{(\alpha)}}},\qquad |I|\geq 0.
\end{equation*}
\end{lemma}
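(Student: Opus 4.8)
The plan is to prove the higher-order Leibniz rule
\[
\frac{\partial^{|I|}}{\partial x^I}\prod_{\alpha=1}^m f_\alpha=\sum_{I^{(1)}+\dots+I^{(m)}=I}\frac{I!}{I^{(1)}!\dots I^{(m)}!}\prod_{\alpha=1}^m\frac{\partial^{|I^{(\alpha)}|}f_\alpha}{\partial x^{I^{(\alpha)}}}
\]
by a double induction: an outer induction on the number of factors $m$, using as the engine the two-factor multivariate Leibniz rule, which is exactly \cite[Proposition 6]{Ha2006} (itself proved by iterating the one-variable Leibniz formula coordinate by coordinate, since the mixed partials commute for smooth functions). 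For $m=1$ the statement is a tautology, and for $m=2$ it is the cited result
\[
\frac{\partial^{|I|}}{\partial x^I}(f_1 f_2)=\sum_{I^{(1)}+I^{(2)}=I}\binom{I}{I^{(1)}}\frac{\partial^{|I^{(1)}|}f_1}{\partial x^{I^{(1)}}}\frac{\partial^{|I^{(2)}|}f_2}{\partial x^{I^{(2)}}},
\]
after noting $\binom{I}{I^{(1)}}=I!/\bigl(I^{(1)}!\,I^{(2)}!\bigr)$ when $I^{(1)}+I^{(2)}=I$.

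For the inductive step, I would assume the formula holds for $m-1$ factors and write $\prod_{\alpha=1}^m f_\alpha=\bigl(\prod_{\alpha=1}^{m-1}f_\alpha\bigr)f_m$. Applying the two-factor rule gives
\[
\frac{\partial^{|I|}}{\partial x^I}\prod_{\alpha=1}^m f_\alpha=\sum_{J+I^{(m)}=I}\frac{I!}{J!\,I^{(m)}!}\left(\frac{\partial^{|J|}}{\partial x^{J}}\prod_{\alpha=1}^{m-1}f_\alpha\right)\frac{\partial^{|I^{(m)}|}f_m}{\partial x^{I^{(m)}}},
\]
and then the induction hypothesis expands the inner derivative as a sum over $I^{(1)}+\dots+I^{(m-1)}=J$ with coefficient $J!/\bigl(I^{(1)}!\dots I^{(m-1)}!\bigr)$. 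Substituting and cancelling the $J!$ telescopes the two multinomial coefficients into a single one:
\[
\frac{I!}{J!\,I^{(m)}!}\cdot\frac{J!}{I^{(1)}!\dots I^{(m-1)}!}=\frac{I!}{I^{(1)}!\dots I^{(m)}!}.
\]
Re-indexing the resulting double sum over pairs $(I^{(1)},\dots,I^{(m-1)})$ with $I^{(1)}+\dots+I^{(m-1)}=J$ and $J$ with $J+I^{(m)}=I$ as a single sum over all $(I^{(1)},\dots,I^{(m)})$ with $I^{(1)}+\dots+I^{(m)}=I$ yields the claim.

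The only genuine subtlety—hardly an obstacle—is the bookkeeping in that re-indexing: one must check that the map $(J,I^{(1)},\dots,I^{(m-1)},I^{(m)})\mapsto(I^{(1)},\dots,I^{(m)})$ is a bijection between $\{(J,I^{(1)},\dots,I^{(m)}):\sum_{\alpha<m}I^{(\alpha)}=J,\ J+I^{(m)}=I\}$ and $\{(I^{(1)},\dots,I^{(m)}):\sum_\alpha I^{(\alpha)}=I\}$, which is immediate since $J$ is determined by the $I^{(\alpha)}$. All the manipulations are entrywise in each of the $n$ coordinates of the multi-indices, so nothing beyond the one-dimensional case is really needed; smoothness of the $f_\alpha$ guarantees that all the mixed partials involved are well defined and order-independent. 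I would state the whole argument in two or three lines in the final write-up, since the substance is entirely contained in the two-factor case and the coefficient cancellation above.
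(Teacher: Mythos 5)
Your proof is correct and follows essentially the same route as the paper's: induction on the number of factors $m$, anchored at the two-factor case of \cite[Proposition 6]{Ha2006}, with the inductive step splitting off $f_m$, applying the induction hypothesis to the remaining product, and telescoping the multinomial coefficients via the cancellation $\frac{I!}{J!\,I^{(m)}!}\cdot\frac{J!}{I^{(1)}!\dots I^{(m-1)}!}=\frac{I!}{I^{(1)}!\dots I^{(m)}!}$. The re-indexing bijection you flag is exactly the step the paper handles by writing $J^{(m)}=I^{(2)}$ in its last line, so nothing is missing.
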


\section{Calculus of variations for higher-order Lagrangian densities}\label{sec:calculus}

We now recall higher-order calculus of variations (for a comprehensive exposition see, for example,  \cite{FeFr1983,Ma1985,BrKr2005}). Let $X$ be a compact, $\pi_{Y,X}:Y\to X$ be a fiber bundle, and $k\in\mathbb Z^+$. A \emph{$k$-th order Lagrangian density} on $\pi_{Y,X}$ is a bundle morphism
\begin{equation*}
\mathfrak L:J^k Y\longrightarrow\textstyle\bigwedge^{n} T^*X
\end{equation*}
covering the identity on $X$. If $X$ is oriented by a volume form $v\in\Omega^n(X)$, we will write $\mathfrak L=Lv$ for a function $L\in C^\infty\left(J^k Y\right)$ known as \emph{Lagrangian}. The \emph{action functional} defined by $\mathfrak L$ is
\begin{equation*}
\mathbb S:\Gamma(\pi_{Y,X})\to\mathbb R,\qquad s\mapsto\int_X\,\mathfrak L\left(j^ks\right).
\end{equation*}
A \emph{variation} of $s\in\Gamma\left(\pi_{Y,X}\right)$ is a 1-parameter family of sections $\left\{s_t\in\Gamma\left(\pi_{Y,X}\right)\mid t\in(-\epsilon,\epsilon),~s_0=s\right\}$. The corresponding \emph{infinitesimal variation} is
\begin{equation*}
\delta s=\left.\frac{d}{dt}\right|_{t=0}s_t\in\Gamma\left(\pi_{s^*TY,X}\right).
\end{equation*}
Henceforth, only $\pi_{Y,X}$-vertical variations are considered, that is, those ones satisfying $\delta s(x)\in V_{s(x)}Y$ for each $x\in X$. The variation of the action functional induced by $\{s_t\}$ is defined as
\begin{equation*}
\delta\mathbb S[s]=\left.\frac{d\,\mathbb S[s_t]}{dt}\right|_{t=0},
\end{equation*}
and it only depends on the infinitesimal variation. That is to say, if $\{s_t\}$ and $\{s'_t\}$ are two variations of $s$ such that $\delta s=\delta s'$, then $\left.d\,\mathbb S[s_t]/dt\right|_{t=0}=\left.d\,\mathbb S[s'_t]/dt\right|_{t=0}$.

\begin{definition}
A section $s\in\Gamma\left(\pi_{Y,X}\right)$ is \emph{critical} or \emph{stationary for $\mathbb S$} if the variation of the corresponding action functional vanishes for every vertical variation of $s$, i.e.,
\begin{equation*}
\delta\mathbb S[s]=0,\qquad\delta s\in\Gamma\left(\pi_{s^*VY,X}\right).
\end{equation*}
\end{definition}

Unlike the first order case, the \emph{covariant Cartan form},
\begin{equation*}
\Theta_{\mathfrak L}\in\Omega^n\left(J^{2k-1} Y\right),    
\end{equation*}
is not uniquely defined for higher-order Lagrangians if $\dim X=n>1$. The main reason is that, although there is a canonical embedding $J^k Y\hookrightarrow J^1\left(J^{k-1}Y\right)$, there are many different choices for the corresponding projection. Nevertheless, it is always possible to construct a globally defined projection by means of tubular neighborhoods, thus yielding a globally defined covariant Cartan form on $J^k Y$ (cf. \cite[Theorem 6.5.13]{Sa1989}). In any case, there is a unique choice for the \emph{Euler--Lagrange form} associated to $\mathfrak L$,
\begin{equation*}
\mathcal{EL}(\mathfrak L)\in\Omega^n\left(J^{2k}Y,\pi_{2k,0}^*\left(V^*Y\right)\right),
\end{equation*}
where $\pi_{V^*Y,Y}$ is the dual of the vertical bundle $\pi_{VY,Y}$. There exist adapted coordinates $(x^\mu,y^\alpha;y_J^\alpha)$ for $J^{2k}Y$ such that the covariant Cartan form is locally given by\footnote{Since the higher-order covariant Cartan form is not uniquely determined, its local expression depends on the choice of bundle coordinates (cf. \cite[\S 5.5]{Sa1989}, \cite[\S 3B]{GoIsMaMo1997} for first order, and \cite[\S 5]{LeRo1987}, \cite[\S 6.5]{Sa1989} for higher orders).}
\begin{equation}\label{eq:Cartanformlocal}
\Theta_{\mathfrak L}=\sum_{|I|=0}^{k-1}\sum_{|J|=0}^{k-|I|-1}(-1)^{|J|} \frac{d^{|J|}}{dx^J}\left(\frac{\partial L}{\partial y_{I+J+1_\mu}^\alpha}\right)\\
\left(dy_I^\alpha-y_{I+1_\nu}^\alpha dx^\nu\right)\wedge\left(\iota_{\partial_\mu}v\right)+Lv,
\end{equation}
where $\iota:\mathfrak X(X)\times\Omega^{k+1}(X)\to\Omega^k(X)$ denotes the left interior product. Similarly, the Euler--Lagrange form is given by 
\begin{equation}\label{eq:ELformlocal}
\mathcal{EL}(\mathfrak L)=\sum_{|J|=0}^k(-1)^{|J|}\frac{d^{|J|}}{dx^J}\left(\frac{\partial L}{\partial y_J^\alpha}\right)v\otimes dy^\alpha.
\end{equation}

\begin{proposition}\label{prop:actionvariation}
Let $\delta s\in\Gamma(\pi_{s^*VY,X})$ be a variation of a section $s\in\Gamma(\pi_{Y,X})$. Then the variation of the action functional is given by
\begin{equation*}
\delta\mathbb S[s]=\int_X\left\langle\left(j^{2k}s\right)^*\mathcal{EL}(\mathfrak L),\delta s\right\rangle,
\end{equation*}
where $\langle\cdot,\cdot\rangle$ denotes the dual pairing within the vector bundle $VY\to Y$.
\end{proposition}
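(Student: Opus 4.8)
}
The plan is to reduce the computation of $\delta\mathbb S[s]$ to a pointwise pairing against the Euler--Lagrange form by the standard higher-order integration-by-parts argument, carried out intrinsically as far as possible and then checked in the adapted coordinates $(x^\mu,y^\alpha;y^\alpha_J)$ of \eqref{eq:ELformlocal}. First I would fix a vertical variation $\delta s\in\Gamma(\pi_{s^*VY,X})$, realize it by a $1$-parameter family $\{s_t\}$ with $s_0=s$, and recall from the text that $\delta\mathbb S[s]$ depends only on $\delta s$, so the choice of family is immaterial. Differentiating under the integral sign,
\begin{equation*}
\delta\mathbb S[s]=\int_X\left.\frac{d}{dt}\right|_{t=0}\mathfrak L\bigl(j^ks_t\bigr)=\int_X\left\langle (j^k s)^*\,d\mathfrak L,\; \left.\frac{d}{dt}\right|_{t=0}j^k s_t\right\rangle,
\end{equation*}
and the key observation is that $\left.\tfrac{d}{dt}\right|_{t=0}j^k s_t$ is precisely the $k$-th prolongation applied to $\delta s$: writing $\delta s = V^\alpha(x)\,\partial_\alpha$ along $s$, the components of this velocity in the $y^\alpha_J$ directions are $\partial^{|J|}V^\alpha/\partial x^J$, by Proposition \ref{prop:jetprolongationvector} together with the identity \eqref{eq:totalderivativesection} (equivalently, Proposition \ref{prop:prolongationflow} applied to any projectable extension of the variation vector field). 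In coordinates with $\mathfrak L = Lv$ this gives
\begin{equation*}
\delta\mathbb S[s]=\int_X\sum_{|J|=0}^{k}\frac{\partial L}{\partial y^\alpha_J}\Bigl(j^k s\Bigr)\,\frac{\partial^{|J|}V^\alpha}{\partial x^J}\;v.
\end{equation*}

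The second step is the iterated integration by parts. Using \eqref{eq:totalderivativesection} to pass between total derivatives on jet bundles and ordinary partial derivatives along sections, I would integrate by parts $|J|$ times in each summand, moving all derivatives off $V^\alpha$ and onto $\partial L/\partial y^\alpha_J$; the boundary terms that appear at each stage are integrals over $X$ of exact $n$-forms, hence vanish since $X$ is compact \emph{without boundary} (as assumed at the start of Section \ref{sec:calculus}). Collecting the signs $(-1)^{|J|}$ one obtains
\begin{equation*}
\delta\mathbb S[s]=\int_X\left(\sum_{|J|=0}^{k}(-1)^{|J|}\frac{d^{|J|}}{dx^J}\!\left(\frac{\partial L}{\partial y^\alpha_J}\right)\circ j^{2k}s\right)V^\alpha\, v,
\end{equation*}
which is exactly $\int_X\langle (j^{2k}s)^*\mathcal{EL}(\mathfrak L),\delta s\rangle$ by the local formula \eqref{eq:ELformlocal}. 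To globalize, I would note that both sides are already coordinate-independent objects: the left-hand side is defined without reference to coordinates, and the right-hand side is the intrinsic pairing of the globally defined Euler--Lagrange form with $\delta s$; a partition of unity subordinate to an adapted atlas then upgrades the coordinate computation to the stated identity.

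The main obstacle is bookkeeping rather than conceptual: one must track the multi-index combinatorics through the repeated integration by parts and verify that the accumulated coefficient is the total-derivative operator $\sum_{|J|}(-1)^{|J|} d^{|J|}/dx^J(\partial L/\partial y^\alpha_J)$ appearing in \eqref{eq:ELformlocal}, rather than some rearrangement of it; here the higher-order Leibniz rule (Lemma \ref{lemma:higherleibniz}) and the identity \eqref{eq:totalderivativesection} do the heavy lifting, and the compactness of $X$ is what makes all intermediate boundary contributions disappear. A minor point worth stating explicitly is that the result is independent of the (non-unique) choice of covariant Cartan form $\Theta_{\mathfrak L}$, since $\mathcal{EL}(\mathfrak L)$ itself is canonical; no choice of projection $J^kY\hookrightarrow J^1(J^{k-1}Y)$ enters the variational computation.
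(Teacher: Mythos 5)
Your proposal is correct and follows essentially the same route as the paper: realize the variation as the flow of a vertical projectable vector field, identify $\left.\tfrac{d}{dt}\right|_{t=0}j^ks_t$ with the $k$-th prolongation $V^{(k)}$ via Propositions \ref{prop:jetprolongationvector} and \ref{prop:prolongationflow}, pass to coordinates, and integrate by parts $|J|$ times using \eqref{eq:totalderivativesection} and the absence of a boundary. The only cosmetic difference is that the paper packages the first differentiation through Cartan's formula $\pounds=d\circ\iota+\iota\circ d$ and Stokes' theorem, whereas you differentiate under the integral sign directly; both land on the same coordinate expression before the integration by parts.
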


\begin{proof}
Since the variation is vertical, we have $s_t=\phi_t\circ s$, $t\in(-\epsilon,\epsilon)$, where $\left\{\phi_t:Y\to Y\mid t\in(-\epsilon,\epsilon)\right\}$ is the flow of a vertical, $\pi_{Y,X}$-projectable vector field $V\in\mathfrak X(Y)$. 
Subsequently,
\begin{equation}\label{eq:deltaS1}
\left.\frac{d}{dt}\right|_{t=0}\mathfrak L\left(j^k s_t\right)=\left.\frac{d}{dt}\right|_{t=0}\left(j^k s\right)^*\left(\mathfrak L(j^k\phi_t)\right)=\left(j^k s\right)^*\left(\left.\frac{d}{dt}\right|_{t=0}\mathfrak L\left(j^k\phi_t\right)\right)=\left(j^k s\right)^*\left(\pounds_{V^{(k)}}\mathfrak L\right),
\end{equation}
where $\pounds$ denotes the Lie derivative and we have used the commutativity of \eqref{eq:commutativityjetlift} and Proposition \ref{prop:prolongationflow}. Since $X$ is boundaryless, the Stokes theorem leads to
\begin{equation}\label{eq:deltaS2}
\int_X \left(j^k s\right)^*{\rm d}\left(\iota_{V^{(k)}}\mathfrak L\right)=\int_X {\rm d}\left(\left(j^k s\right)^*\left(\iota_{V^{(k)}}\mathfrak L\right)\right)=\int_{\partial X}\left(j^k s\right)^*\left(\iota_{V^{(k)}}\mathfrak L\right)=0.
\end{equation}
Similarly, we pick bundle coordinates $(x^\mu,y^\alpha;y_J^\alpha)$ for $J^k Y$, which allows us to write $V=V^\alpha\partial_\alpha$ for certain (local) functions $V^\alpha:X\to\mathbb R$, $1\leq\alpha\leq m$. Thus, Proposition \ref{prop:jetprolongationvector} yields
\begin{equation}\label{eq:deltaS3}
\iota_{V^{(k)}}{\rm d}\mathfrak L=\left(\frac{\partial L}{\partial y^\alpha}V^\alpha+\frac{\partial L}{\partial y_J^\alpha}\frac{d^{|J|}V^\alpha}{d x^J}\right)v.
\end{equation}
Lastly, the higher-order integration by parts formula for boundaryless manifolds (see, for example, \cite[Lemma 4.5]{CaLeMa2010}), together with \eqref{eq:totalderivativesection}, give
\begin{align}\label{eq:deltaS4}
\int_X\left(j^k s\right)^*\left(\frac{\partial L}{\partial y_J^\alpha}\frac{d^{|J|}V^\alpha}{d x^J}\,v\right) & =\int_X\frac{\partial }{\partial y_J^\alpha}\left(L\circ j^k s\right)\,\frac{\partial^{|J|}V^\alpha}{\partial x^J}\,v\\\nonumber
& =(-1)^{|J|}\int_X\frac{\partial^{|J|}}{\partial x^J}\left(\frac{\partial }{\partial y_J^\alpha}\left(L\circ j^k s\right)\right)\,V^\alpha\,v\\\nonumber
& =(-1)^{|J|}\int_X\left(j^{2k}s\right)^*\left(\frac{d^{|J|}}{dx^J}\left(\frac{\partial L}{\partial y_J^\alpha}\right)V^\alpha\,v\right).
\end{align}
On the other hand, it is clear that $\delta s=\left.\left(d/dt\right)\right|_{t=0}\left(\phi_t\circ s\right)=V\circ s=V^\alpha\partial_\alpha\in\Gamma\left(\pi_{s^*VY,X}\right)$. Hence, form the local expression of the Euler--Lagrange form \eqref{eq:ELformlocal} we get
\begin{equation}\label{eq:deltaS5}
\left\langle\left(j^{2k}s\right)^*\mathcal{EL}(\mathfrak L),\delta s\right\rangle=\left(j^{2k}s\right)^*\left(\sum_{|J|=0}^k(-1)^{|J|}\frac{d^{|J|}}{dx^J}\left(\frac{\partial L}{\partial y_J^\alpha}\right)\right)V^\alpha\,v.
\end{equation}

By using Cartan's formula, i.e., $\pounds=d\circ\iota+\iota\circ d$, and by gathering the previous expressions, we finish
\begin{align*}
\delta\mathbb S[s] & =\left.\frac{d}{dt}\right|_{t=0}\int_X \mathfrak L\left(j^k s_t\right)\overset{\eqref{eq:deltaS1}}{=}\int_X \left(j^k s\right)^*\left(\pounds_{V^{(k)}}\mathfrak L\right)\\
& =\int_X \left(j^k s\right)^*\left(\iota_{V^{(k)}}{\rm d}\mathfrak L\right)+\int_X \left(j^k s\right)^*{\rm d}\left(\iota_{V^{(k)}}\mathfrak L\right)\overset{\eqref{eq:deltaS2},\eqref{eq:deltaS3}}{=}\int_X\left(j^k s\right)^*\left(\frac{\partial L}{\partial y^\alpha}V^\alpha+\frac{\partial L}{\partial y_J^\alpha}\frac{d^{|J|}V^\alpha}{d x^J}\right)v\\
& \overset{\eqref{eq:deltaS4}}{=}\int_X\left(j^{2k}s\right)^*\left(\frac{\partial L}{\partial y^\alpha}+(-1)^{|J|}\frac{d^{|J|}}{dx^J}\left(\frac{\partial L}{\partial y_J^\alpha}\right)\right)V^\alpha\,v\overset{\eqref{eq:deltaS5}}{=}\int_X\left\langle\left(j^{2k}s\right)^*\mathcal{EL}(\mathfrak L),\delta s\right\rangle.
\end{align*}
\end{proof}

The following result is a straightforward consequence of the previous Proposition and it gives the higher-order version of the well-known Euler--Lagrange equations.

\begin{theorem}\label{theorem:ELequations}
Let $\mathfrak L=Lv:J^k Y\to\bigwedge^n T^*X$ be a $k$-th order Lagrangian density, and consider the corresponding Euler--Lagrange form, $\mathcal{EL}(\mathfrak L)$. Then the following statements for a section $s\in\Gamma\left(\pi_{Y,X}\right)$ are equivalent:
\begin{enumerate}[(i)]
    \item The variational principle $ \delta\mathbb S[s]=0$ holds for arbitrary variations $\delta s\in\Gamma(\pi_{s^*VY,X})$.
    \item $s$ satisfies the \emph{$k$-th order Euler--Lagrange field equations}, i.e., $(j^{2k} s)^*\mathcal{EL}(\mathfrak L)=0$.
\end{enumerate}
\end{theorem}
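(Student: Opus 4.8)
The plan is to deduce this theorem directly from Proposition~\ref{prop:actionvariation}, since that proposition already identifies $\delta\mathbb S[s]$ with the integral pairing $\int_X\langle(j^{2k}s)^*\mathcal{EL}(\mathfrak L),\delta s\rangle$. The implication (ii)$\Rightarrow$(i) is then immediate: if $(j^{2k}s)^*\mathcal{EL}(\mathfrak L)=0$, then the integrand vanishes identically for every choice of vertical variation $\delta s$, hence $\delta\mathbb S[s]=0$. The real content is the converse (i)$\Rightarrow$(ii), which is the standard \emph{fundamental lemma of the calculus of variations} in the bundle setting.

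For (i)$\Rightarrow$(ii), I would argue by contraposition. Suppose $(j^{2k}s)^*\mathcal{EL}(\mathfrak L)\neq 0$. Writing things in adapted bundle coordinates $(x^\mu,y^\alpha;y^\alpha_J)$ and using the local expression \eqref{eq:ELformlocal}, the pulled-back Euler--Lagrange form has components
\begin{equation*}
\mathcal{E}_\alpha(x)=\left(j^{2k}s\right)^*\left(\frac{\partial L}{\partial y^\alpha}+\sum_{|J|=1}^k(-1)^{|J|}\frac{d^{|J|}}{dx^J}\left(\frac{\partial L}{\partial y^\alpha_J}\right)\right),\qquad 1\leq\alpha\leq m,
\end{equation*}
and by assumption some $\mathcal{E}_{\alpha_0}$ is nonzero at some point $x_0\in X$. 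One then chooses a bump-function–type variation: take $\delta s=V\circ s$ with $V=V^\alpha\partial_\alpha$ vertical, $V^{\alpha}=0$ for $\alpha\neq\alpha_0$, and $V^{\alpha_0}$ supported in a small coordinate chart around $x_0$, equal near $x_0$ to a nonnegative bump function times $\mathrm{sgn}(\mathcal{E}_{\alpha_0}(x_0))$, so that the integrand $\mathcal{E}_{\alpha_0}V^{\alpha_0}v$ is of one sign and strictly positive near $x_0$. This forces $\delta\mathbb S[s]=\int_X\mathcal{E}_{\alpha_0}V^{\alpha_0}\,v\neq 0$, contradicting (i). One must check that such a compactly-supported vertical variation is genuinely realizable as $\{s_t\}$ with $\delta s=V\circ s$; this follows from the flow construction used in the proof of Proposition~\ref{prop:actionvariation} (extend $V$ to a vertical $\pi_{Y,X}$-projectable vector field on $Y$ supported over the chart and take $s_t=\phi_t\circ s$), using that $X$ is compact so the flow is complete.

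I would also remark that the statement is independent of the choice of covariant Cartan form, since $\mathcal{EL}(\mathfrak L)$ is canonically defined by \eqref{eq:ELformlocal}; the ambiguity in $\Theta_{\mathfrak L}$ enters only through the exact (boundary) term, which drops out by Stokes' theorem as in \eqref{eq:deltaS2}. The main (and only genuine) obstacle is the localization argument in (i)$\Rightarrow$(ii): one needs the variations $\delta s$ to range over a large enough class — in particular compactly supported ones inside a single trivializing chart — and to be sure these are admissible in the sense of the variational problem. Since $X$ is boundaryless (compact without boundary) and the paper restricts to vertical variations generated by flows of vertical projectable vector fields, this is exactly the setting where the fundamental lemma applies verbatim, so the argument is routine once set up correctly.
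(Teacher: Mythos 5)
Your proposal is correct and follows essentially the same route as the paper, which states the theorem as a straightforward consequence of Proposition~\ref{prop:actionvariation}: identify $\delta\mathbb S[s]$ with $\int_X\langle(j^{2k}s)^*\mathcal{EL}(\mathfrak L),\delta s\rangle$ and invoke the fundamental lemma of the calculus of variations. You merely make explicit the bump-function localization and the realizability of compactly supported vertical variations via flows, details the paper leaves implicit.
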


From \eqref{eq:totalderivativesection} and \eqref{eq:ELformlocal}, we obtain the local expression of the higher-order Euler--Lagrange equations:
\begin{equation*}
\sum_{|J|=0}^k(-1)^{|J|}\frac{\partial^{|J|}}{\partial x^J}\left(\frac{\partial L}{\partial y_J^\alpha}\left(j^k s\right)\right)=0,\qquad 1\leq\alpha\leq m.
\end{equation*}
    
\begin{remark}\label{remark:noncompact}
The calculus of variations described above is straightforwardly extended to a non-compact base manifold $X$ by considering compactly supported variations. In other words, given a section $s\in\Gamma(\pi_{Y,X})$, the only variations $\delta s$ allowed are those satisfying 
\begin{equation*}
j_x^{k-1}\,\delta s=0,\qquad x\in\partial\mathcal U,
\end{equation*}
for some open subset $\mathcal U\subset X$ with compact closure, $\overline{\mathcal U}$. Locally, this condition ensures that $\delta s$ as well as its partial derivatives up to order $k-1$ vanish on the boundary, $\partial\mathcal U$. As a result, no boundary terms appear when integrating by parts and, thus, Proposition \ref{prop:actionvariation} is still valid.
\end{remark}

\section{Geometry of the reduced configuration space}\label{sec:reducedspace}

Let $G$ be a Lie group, $\mathfrak g=T_e G$ be its Lie algebra, $e\in G$ being the identity element, and $\mathfrak g^*$ be the dual of the Lie algebra. The corresponding exponential map is denoted by $\exp:\mathfrak g\to G$, and the adjoint representation of $G$ is denoted by $\Ad_g:\mathfrak g\to\mathfrak g$ for each $g\in G$. In the same vein, the adjoint and coadjoint representations of $\mathfrak g$ are denoted by $\ad_\xi:\mathfrak g\to\mathfrak g$ and $\ad_\xi^*:\mathfrak g^*\to\mathfrak g^*$, respectively, for each $\xi\in\mathfrak g$.

Let $\pi_{P,X}:P\to X$ be a principal $G$-bundle. The corresponding right action is denoted by
\begin{equation*}
R:P\times G\to P,\qquad(y,g)\mapsto R_y(g)=R_g(y)=y\cdot g.    
\end{equation*}
Recall that the infinitesimal generator of $\xi\in\mathfrak g$ is the vertical vector field $\xi^*\in\mathfrak X(P)$ given by
\begin{equation*}
\xi_y^*=\left.\frac{d}{dt}\right|_{t=0}y\cdot\exp(t\xi)=\left(dR_y\right)_e(\xi),\qquad y\in P.
\end{equation*}

For each $k\in\mathbb Z^+$, the $k$-th jet extension of the action,
\begin{equation*}
R^{(k)}:J^k P\times G\to J^k P,\qquad \left(j_x^k s,g\right)\mapsto j_x^k(s\cdot g),
\end{equation*}
is again free and proper, thus yielding a principal $G$-bundle, $J^k P\to J^k P/G$.

We denote by $C(P)=J^1 P/G\to X$ the bundle of connections of $\pi_{P,X}$, which is an affine bundle modelled on $T^*X\otimes \ad(P)\to X$, being $\ad(P)=(P\times\mathfrak g)/G$ the adjoint bundle of $\pi_{P,X}$. Recall that there is a bijective correspondence between (local) sections of $\pi_{C(P),X}$ and (local) principal connections on $\pi_{P,X}$, which we denote by
\begin{equation*}
\Omega^1(P,\mathfrak g)\ni{A}\overset{1:1}{\longleftrightarrow}\sigma_{A}\in\Gamma(\pi_{C(P),X}).
\end{equation*}
The following result ensures that holonomic sections of the jet bundle yield flat principal connections.

\begin{lemma}\label{lemma:holonomicflat}
Let $s\in\Gamma_{loc}(\pi_{P,X})$ and define $\sigma_{A}=\left[j^1 s\right]_G\in\Gamma_{loc}(\pi_{C(P),X})$. Then the local connection ${A}\in\Omega_{loc}^1(P,\mathfrak g)$ is flat.
\end{lemma}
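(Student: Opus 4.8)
The plan is to work in a local trivialization, where the connection form $A$ and its curvature $F_A$ have explicit expressions, and show that the curvature vanishes precisely because $A$ is built from the jet lift of a section, i.e. locally $A$ is a "pure gauge" connection.

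Concretely, I would first fix a local trivialization $P|_{\mathcal U}\simeq \mathcal U\times G$ in which the section reads $s(x)=(x,g(x))$ for some smooth map $g\colon\mathcal U\to G$. In this trivialization the flat (trivial) connection is the Maurer--Cartan form pulled back via the second projection; a general connection corresponds, under the identification above, to a $\mathfrak g$-valued one-form $\mathcal A\in\Omega^1(\mathcal U,\mathfrak g)$. The key computation is to identify which $\mathcal A$ corresponds to $\sigma_A=[j^1 s]_G$. Since the class $[j^1 s]_G$ is the $G$-orbit of the holonomic jet, and the holonomic jet of $s$ through $x$ determines the horizontal subspace $\mathrm{d}s_x(T_xX)\subset T_{s(x)}P$, the associated connection is the unique principal connection having $\mathrm{d}s(TX)$ as its horizontal distribution along $s(\mathcal U)$; translating this by the right $G$-action extends it to all of $P|_{\mathcal U}$. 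In the trivialization this forces $\mathcal A = -(\mathrm{d}g)\,g^{-1}$ (the right Maurer--Cartan form of $g^{-1}$, up to the usual sign/side conventions), i.e. $A$ is the gauge transform of the trivial connection by the gauge transformation $g$.

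Once $A$ is exhibited as a gauge transform of the flat trivial connection, flatness is immediate: the curvature is gauge-covariant, $F_{g\cdot 0} = \mathrm{Ad}_g\, F_0 = 0$. Alternatively, and perhaps cleaner to write, I would just verify directly that $\mathcal A=-(\mathrm{d}g)g^{-1}$ satisfies the structure equation $\mathrm{d}\mathcal A+\tfrac12[\mathcal A,\mathcal A]=0$: differentiating $(\mathrm{d}g)g^{-1}$ and using $\mathrm{d}(g^{-1})=-g^{-1}(\mathrm{d}g)g^{-1}$ produces exactly the bracket term with the opposite sign, so the two cancel. This is the Maurer--Cartan equation for $g$, a standard identity.

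The main obstacle is not any hard analysis but rather getting the reduction bookkeeping exactly right: one must argue carefully that $[j^1 s]_G$ really does correspond, under the bijection $\Omega^1(P,\mathfrak g)\leftrightarrow\Gamma(\pi_{C(P),X})$ recalled above, to the connection whose horizontal distribution is spanned by $\mathrm{d}s(TX)$ (this uses that $R^{(1)}$-invariance of the jet class matches $G$-equivariance of the horizontal distribution), and that this is well-defined independently of the chosen trivialization. Sign and side conventions for the Maurer--Cartan form in the chosen trivialization must be tracked consistently; with those pinned down, the flatness follows from the single identity above, so the content of the lemma is essentially that "the jet lift of a section is flat because it is locally pure gauge."
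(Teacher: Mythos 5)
Your proposal is correct and follows essentially the same route as the paper: both exhibit $A$ locally as (a gauge transform of) the canonical flat connection on the trivial bundle. The paper streamlines this by choosing the trivialization $\varphi(x,g)=s(x)\cdot g$ induced by $s$ itself, so that $\varphi^*A$ is \emph{literally} the canonical flat connection (comparison of horizontal lifts suffices) and no Maurer--Cartan structure-equation computation is needed, whereas your generic trivialization forces you to verify ${\rm d}\mathcal A+\tfrac12[\mathcal A,\mathcal A]=0$ for $\mathcal A=-({\rm d}g)g^{-1}$.
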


\begin{proof}
The local section $s$ induces a trivialization of $\pi_{P,X}$ on its domain $\mathcal U\subset X$,
\begin{equation*}
\varphi:\mathcal U\times G\overset{\sim}{\to}P|_{\mathcal U},\qquad(x,g)\mapsto \varphi(x,g)=s(x)\cdot g.
\end{equation*}
Recall that any connection is uniquely determined by its horizontal lift at each point. Given $y=s(x)\cdot g\in P|_{\mathcal U}$, we denote by $Hor_y^{A}: T_x X\to T_y P$ the horizontal lifting of ${A}$ at $y$. Since $\sigma_{A}=\left[j^1 s\right]_G$, we have
\begin{equation}\label{eq:horizontalliftsecion}
Hor_y^{A}(U_x)=d(R_g\circ s)_x(U_x)\in T_y P,\qquad U_x\in T_xX.
\end{equation}
On the other hand, consider the canonical flat connection ${A}_0\in\Omega^1(X\times G,\mathfrak g)$ on the trivial bundle $\pi_{X\times G,X}$. For each $(x,g)\in X\times G$ it is defined as
\begin{equation*}
Hor_{(x,g)}^{{A}_0}(U_x)=(U_x,0_g)\in T_{(x,g)}(X\times G),\qquad U_x\in T_x X.
\end{equation*}
It turns out that $\varphi^*{A}=\left.{A}_0\right|_{\mathcal U\times G}$, so ${A}$ is a flat connection. Indeed, let $(x,g)\in\mathcal U\times G$ and $U_x\in T_x X$. Since $\varphi$ covers the identity on $\mathcal U$, we have
\begin{align*}
Hor_{(x,g)}^{\varphi^*{A}}(U_x) & =\left(d\varphi^{-1}\right)_{s(x)\cdot g}\left(Hor_{\varphi(x,g)}^{A}(U_x)\right)=\left(d\varphi^{-1}\right)_{s(x)\cdot g}\left(d(R_g\circ s)_x(U_x)\right)\\
& =d\left(\varphi^{-1}\circ R_g\circ s\right)_x(U_x)=(d\iota_g)_x(U_x)=(U_x,0_g)=Hor_{(x,g)}^{{A}_0}(U_x),
\end{align*}
where $X\ni x\mapsto\iota_g(x)=(x,g)\in X\times G$.
\end{proof}

The main goal of this section is to study the geometry of the quotient $\left.\left(J^k P\right)\right/G$, which was first investigated in \cite{EtMu1994}. For the convenience of the reader, we present a detailed proof of the main result (see Corollary \ref{corollary:isomorphismquotient} below). To begin with, we introduce the following bundle morphisms.

\begin{proposition}\label{prop:welldefined}
For each $k\geq 2$, the bundle morphism $\Theta_k:J^k P\to J^{k-1}C(P)$ defined as $j_x^k s\mapsto j_x^{k-1}[j^1 s]_G$ is well-defined and it descends to a map on the quotient, $\Xi_k:\left.\left(J^k P\right)\right/G\to J^{k-1}C(P)$.
\end{proposition}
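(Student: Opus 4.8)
The plan is to realize $\Theta_k$ as a composition of two maps whose good behaviour is already standard, so that both its well-definedness (independence of the representative section) and its smoothness come for free, and then to obtain $\Xi_k$ by checking $G$-invariance and invoking the universal property of the orbit quotient. First recall that the assignment $s\mapsto[j^1s]_G=\sigma_{A}$ is nothing but post-composition with the canonical projection $\rho\colon J^1P\to J^1P/G=C(P)$, i.e.\ $[j^1s]_G=\rho\circ j^1s$ as a (local) section of $\pi_{C(P),X}$; here $\rho$ is a smooth bundle morphism over $\mathrm{id}_X$ (the principal $G$-bundle projection $J^1P\to C(P)$, which covers $\mathrm{id}_X$ because the action $R^{(1)}$ is fibrewise over $X$) and it is $G$-invariant, $\rho\circ R^{(1)}_g=\rho$ for every $g\in G$, by the very definition of the quotient. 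Next recall the canonical (holonomic) embedding
\[
\iota_k\colon J^kP\hookrightarrow J^{k-1}\bigl(J^1P\bigr),\qquad j^k_xs\longmapsto j^{k-1}_x\bigl(j^1s\bigr),
\]
a bundle morphism over $\mathrm{id}_X$ (see, e.g., \cite[Chapter 6]{Sa1989}); its being well defined is the elementary observation that two sections with the same $k$-jet at $x$ have first jets with the same $(k-1)$-jet at $x$, both amounting to equality of all partial derivatives of order $\leq k$ at $x$.

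With these two ingredients, unwinding the definition of the $(k-1)$-jet lift of the bundle morphism $\rho$ over the identity gives, for every local section $s$,
\[
\bigl(j^{k-1}\rho\circ\iota_k\bigr)\bigl(j^k_xs\bigr)=j^{k-1}\rho\bigl(j^{k-1}_x(j^1s)\bigr)=j^{k-1}_x\bigl(\rho\circ j^1s\bigr)=j^{k-1}_x[j^1s]_G,
\]
so $\Theta_k=j^{k-1}\rho\circ\iota_k$. As a composition of two smooth bundle morphisms over $\mathrm{id}_X$ (jet lifts being functorial, cf.\ \eqref{eq:commutativityjetlift}), $\Theta_k$ is itself a well-defined smooth bundle morphism over $X$; in particular $j^{k-1}_x[j^1s]_G$ depends only on $j^k_xs$, which is the asserted well-definedness. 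For the descent it remains to check that $\Theta_k$ is constant on the orbits of $R^{(k)}$. Fix $g\in G$; writing $j^k_xs\cdot g=R^{(k)}(j^k_xs,g)=j^k_x(s\cdot g)$ with $(s\cdot g)(x)=s(x)\cdot g$, and using $\rho\circ R^{(1)}_g=\rho$ pointwise, the sections $[j^1(s\cdot g)]_G=\rho\circ j^1(s\cdot g)$ and $[j^1s]_G=\rho\circ j^1s$ of $\pi_{C(P),X}$ coincide, whence
\[
\Theta_k\bigl(j^k_xs\cdot g\bigr)=j^{k-1}_x[j^1(s\cdot g)]_G=j^{k-1}_x[j^1s]_G=\Theta_k\bigl(j^k_xs\bigr).
\]
Since the orbit map $p_k\colon J^kP\to(J^kP)/G$ is a surjective submersion whose fibres are exactly these orbits (the action $R^{(k)}$ being free and proper), the $G$-invariant smooth map $\Theta_k$ factors uniquely as $\Theta_k=\Xi_k\circ p_k$ with $\Xi_k\colon(J^kP)/G\to J^{k-1}C(P)$ smooth, and $\Xi_k$ covers $\mathrm{id}_X$ because $\Theta_k$ does, so it is a bundle morphism over $X$.

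There is no genuine difficulty here: the only real step is to recognize the factorization $\Theta_k=j^{k-1}\rho\circ\iota_k$ through the canonical embedding $J^kP\hookrightarrow J^{k-1}(J^1P)$ and the order-one quotient map $\rho$; everything else is functoriality of jet prolongation and the universal property of the quotient. A purely local verification in adapted coordinates, using the coordinate expression of a principal connection and Lemma~\ref{lemma:holonomicflat}, is of course also possible, but it is more cumbersome and less transparent.
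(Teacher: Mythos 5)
Your proof is correct and follows essentially the same route as the paper: both factor $\Theta_k$ as the canonical immersion $J^kP\hookrightarrow J^{k-1}(J^1P)$ followed by the $(k-1)$-th jet lift of the projection $J^1P\to C(P)$, and both verify $G$-invariance by the same direct computation before descending to the quotient. Your version merely spells out the functoriality and the universal property of the orbit quotient in more detail.
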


\begin{proof}
To begin with, note that $\Theta_k$ is the composition of the natural immersion $J^k P\hookrightarrow J^{k-1}\left(J^1 P\right)$ with the $(k-1)$-th jet lift of the canonical projection $\pi_1:J^1 P\to C(P)$. Thus, it is well-defined.

For the second part, let $g\in G$. Then
\begin{equation*}
\Theta_k\left((j_x^ks)\cdot g\right)=\Theta_k\left(j_x^k(s\cdot g)\right)=j_x^{k-1}[j^1 (s\cdot g)]_G=j_x^{k-1}[(j^1 s)\cdot g]_G=j_x^{k-1}[j^1 s]_G=\Theta_k\left(j_x^k s\right).
\end{equation*}
\end{proof}

The previous result may be summarized in the following commutative diagram.

\begin{equation}\label{eq:diagram}
\begin{array}{cc}
\begin{tikzpicture}
\matrix (m) [matrix of math nodes,row sep=3em,column sep=5em,minimum width=2em]
{	J^k P & J^{k-1}C(P)\\
	\left.\left(J^k P\right)\right/G & \\};
\path[-stealth]
(m-1-1) edge [] node [above] {$\Theta_k$} (m-1-2)
(m-1-1) edge [] node [left] {$\pi_k$} (m-2-1)
(m-2-1) edge [] node [below] {$~\Xi_k$} (m-1-2);
\end{tikzpicture} 
\quad & \quad
\begin{tikzpicture}
\matrix (m) [matrix of math nodes,row sep=3em,column sep=5em,minimum width=2em]
{	j_x^k s & j_x^{k-1}[j^1 s]_G \\
	\left[j_x^k s\right]_G & \\};
\path[-stealth]
(m-1-1) edge [|->] node [above] {} (m-1-2)
(m-1-1) edge [|->] node [left] {} (m-2-1)
(m-2-1) edge [|->] node [right] {} (m-1-2);
\end{tikzpicture}
\end{array}
\end{equation}

Consider the curvature map, i.e.,
\begin{equation*}
\tilde F:J^1C(P)\to\textstyle\bigwedge^2 T^*X\otimes \ad(P),\qquad j_x^1\sigma_{A}\mapsto\tilde F^A(x),
\end{equation*}
where $\tilde F^A\in\Omega^2(X,\ad(P))$ is curvature of the (local) principal connection ${A}\in\Omega^1(P,\mathfrak g)$ regarded as a 2-from on $M$ with values in the adjoint bundle. By lifting it to the $(k-2)$-jet and by composing it with the natural immersion of $J^{k-1}C(P)\hookrightarrow J^{k-2}\left(J^1 C(P)\right)$ we obtain
\begin{equation*}
j^{k-2}\tilde F:J^{k-1}C(P)\to J^{k-2}\left(\textstyle\bigwedge^2 T^*X\otimes \ad(P)\right),\qquad j_x^{k-1}\sigma_{A}\mapsto j_x^{k-2}\tilde F^A.
\end{equation*}
Let $\hat 0:X\to\bigwedge^2 T^*X\otimes \ad(P)$ be the zero section, and consider its $(k-2)$-jet lift, $j^{k-2}\hat 0:X\to J^{k-2}\left(\bigwedge^2 T^*X\otimes \ad(P)\right)$. We define the kernel of $j^{k-2}\tilde F$ as
\begin{equation*}
\ker j^{k-2}\tilde F=\left\{j_x^{k-1}\sigma_{A}\in J^{k-1}C(P)\mid j_x^{k-2}\tilde F^A=j_x^{k-2}\hat 0\right\}.
\end{equation*}

\begin{proposition}\label{prop:Psikproperties}
For each $k\geq 2$, we have
\begin{enumerate}[(i)]
    \item $\Xi_k$ is an injective bundle morphism over $X$.
    \item $\operatorname{im}\Xi_k\subset\ker j^{k-2}\tilde F$.
\end{enumerate}
\end{proposition}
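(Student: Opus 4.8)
The plan is to prove the two assertions in turn, relying on the concrete description of $\Theta_k$ as a composition of jet lifts that was established in Proposition \ref{prop:welldefined}, together with Lemma \ref{lemma:holonomicflat}.

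\textbf{Injectivity of $\Xi_k$.} Since $\pi_k:J^kP\to (J^kP)/G$ is the quotient projection and $\Xi_k\circ\pi_k=\Theta_k$ by \eqref{eq:diagram}, showing that $\Xi_k$ is injective amounts to showing that $\Theta_k(j_x^k s)=\Theta_k(j_x^k s')$ forces $j_x^k s$ and $j_x^k s'$ to lie in the same $G$-orbit. First I would reduce to a local trivialization: pick a local section and use the induced coordinates $(x^\mu, g)$ on $P$, under which $s$ and $s'$ are represented by maps $x\mapsto g(x)$ and $x\mapsto g'(x)$ into $G$. The equality $\Theta_k(j_x^ks)=\Theta_k(j_x^k s')$ says that the $(k-1)$-jets at $x$ of the connection-valued sections $[j^1 s]_G$ and $[j^1 s']_G$ agree. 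The local connection form associated to a section $x\mapsto g(x)$ is (in the given trivialization) the Maurer--Cartan pullback $A = g^{-1}\,\mathrm{d}g$, so the hypothesis becomes: the $(k-1)$-jets at $x$ of $g^{-1}\mathrm{d}g$ and $(g')^{-1}\mathrm{d}g'$ coincide. Writing $h=g'\,g^{-1}$, a direct computation (using the higher-order Leibniz rule, Lemma \ref{lemma:higherleibniz}, to differentiate the product) shows that the $(k-1)$-jet of the logarithmic derivative of $h$ vanishes, i.e.\ all derivatives of $h$ up to order $k-1$ are determined, forcing $j_x^{k-1}h$ to be the jet of a constant $h_0\in G$; but then $j_x^k s' = j_x^k(s\cdot h_0)$ up to the order-$k$ data, and one checks the order-$k$ terms match as well. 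Hence $[j_x^ks]_G=[j_x^k s']_G$, which gives injectivity. I would also remark that $\Xi_k$ covers the identity on $X$ because $\Theta_k$ does, which is immediate from the definition $j_x^k s\mapsto j_x^{k-1}[j^1s]_G$.

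\textbf{Image lands in $\ker j^{k-2}\tilde F$.} For the second claim, take any $j_x^k s\in J^kP$ and set $\sigma_A=[j^1 s]_G\in\Gamma_{loc}(\pi_{C(P),X})$. By Lemma \ref{lemma:holonomicflat}, the associated local connection $A$ is flat, i.e.\ its curvature $\tilde F^A\in\Omega^2(X,\ad(P))$ vanishes identically on a neighborhood of $x$. Consequently every jet of $\tilde F^A$ vanishes there; in particular $j_x^{k-2}\tilde F^A = j_x^{k-2}\hat 0$. Since $\Theta_k(j_x^k s)=j_x^{k-1}\sigma_A$ and $(j^{k-2}\tilde F)(j_x^{k-1}\sigma_A)=j_x^{k-2}\tilde F^A$ by definition of the lifted curvature map, this says precisely $\Theta_k(j_x^k s)\in\ker j^{k-2}\tilde F$. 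Passing to the quotient, $\operatorname{im}\Xi_k = \operatorname{im}\Theta_k \subset \ker j^{k-2}\tilde F$.

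\textbf{Main obstacle.} The routine part is (ii), which is essentially a one-line consequence of Lemma \ref{lemma:holonomicflat}. The genuine work is the injectivity in (i): one has to turn the statement ``the $(k-1)$-jets of the two logarithmic derivatives agree'' into ``the two base sections differ by the action of a fixed group element, up to order $k$'', and this requires a careful bookkeeping of the higher-order chain/Leibniz rules in a possibly non-abelian group $G$, checking that the obstruction to matching the jets at each order $0,1,\dots,k$ is resolved by a single constant $h_0\in G$ rather than an $x$-dependent correction. I expect this inductive comparison of jet data — and in particular verifying that the top-order ($|J|=k$) terms are also forced once the lower-order ones match — to be the step that demands the most care.
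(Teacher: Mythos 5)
Your proof of \emph{(ii)} is exactly the paper's: both reduce it to Lemma \ref{lemma:holonomicflat} in one line. For \emph{(i)}, however, you take a genuinely different route. The paper argues by separation of orbits: it assumes $\left[j_x^k s\right]_G\neq\left[j_x^k s'\right]_G$ and asserts that consequently $[j_x^1 s]_G\neq[j_x^1 s']_G$, whence the images under $\Xi_k$ differ. That intermediate implication is not justified as stated (two sections can share the same $1$-jet at $x$ while having distinct, non-$G$-related $k$-jets there), so the real content of injectivity --- that the $(k-1)$-jet of the reduced section at $x$, together with the freeness of the action on $P$, pins down the $k$-jet of $s$ up to a single group element --- is what your argument supplies. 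Your route (pass to a trivialization, compare the $(k-1)$-jets of the logarithmic derivatives, and conclude that the two sections differ by a \emph{constant} group element to order $k$) is essentially the same recursion that the paper itself carries out in coordinates in the proof of Theorem \ref{theorem:imageandproper}, where the $y_J^\alpha$ are solved order by order from the ${A}_{\mu,J}^\alpha$ after normalizing $y^\alpha=0$; so your proof is more laborious than the paper's two lines but is also the one that actually closes the argument, and it buys you for free the explicit inverse of $\Xi_k$ on its image.

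One point to repair in the write-up: your conventions for the logarithmic derivative are adapted to a \emph{left} action, while the paper's structure group acts on the \emph{right}. With $A=g^{-1}\,{\rm d}g$ and $h=g'\,g^{-1}$, the identity is $(g')^{-1}{\rm d}g'=g^{-1}{\rm d}g+\Ad_{g^{-1}}\left(h^{-1}{\rm d}h\right)$, so equality of the jets forces $h$ constant and $g'=h_0\,g$, which is a left translation and need not lie in the $G$-orbit $\{g\cdot g_0\}$. You should instead compare the right logarithmic derivatives and set $h=g^{-1}g'$ (equivalently, work with the normalized representative $y^\alpha=0$ as in Proposition \ref{prop:Thetakcoordinates}), after which the same computation yields $j_x^k s'=j_x^k(s\cdot h_0)$ as required. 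Also note that once the $(k-1)$-jet of ${\rm d}h$ vanishes you already control $h$ up to order $k$, so no separate check of the top-order terms is needed.
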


\begin{proof}
To check \emph{(i)}, let $\left[j_x^k s\right]_G\neq\left[j_x^k s'\right]_G$ be two different elements in $\left.\left(J^k P\right)\right/G$. In other words, $j_x^k s\neq(j_x^k s')\cdot g=j_x^k(s'\cdot g)$ for each $g\in G$. Subsequently, $[j_x^1 s]_G\neq[j_x^1 s']_G$ and, thus, $\Xi_k\left(\left[j_x^k s\right]_G\right)\neq\Xi_k\left(\left[j_x^k s'\right]_G\right)$. On the other hand, \emph{(ii)} is a straightforward consequence of Lemma \ref{lemma:holonomicflat}.
\end{proof}

Let us introduce a trivializing chart of $\pi_{P,X}$, i.e., we pick an open subset $\mathcal U\subset X$ with $\mathcal U\simeq\mathbb R^n$ via the local coordinates $x=(x^\mu)$ and such that $P|_{\mathcal U}\simeq\mathcal U\times G$. To keep the notation simple, we write $\mathcal U=X$. Likewise, let $\{B_\alpha\in\mathfrak g\mid 1\leq\alpha\leq m\}$ be a basis of $\mathfrak g$ and consider normal coordinates $(y^\alpha)$ in a neighborhood of the identity element $e\in G$, i.e., $g=\exp(y^\alpha B_\alpha)$. This way, we have normal bundle coordinates $(x^\mu,y^\alpha)$ for $\pi_{P,X}$. Note that for each $g=\exp(y^\alpha B_\alpha),h=\exp(z^\alpha B_\alpha)\in G$ close enough to the identity, we have $gh=\exp\left(f_{CD}^\alpha y^C z^D B_\alpha\right)$,
where $C=(C_1,\dots,C_m)$ is a multi-index, $y^C=(y^1)^{C_1}\dots(y^m)^{C_m}$ and $f_{CD}^\alpha$, $1\leq\alpha\leq m$, $|C|,|D|\geq 0$, are the constants of the Baker--Campbell--Hausdorff formula (cf. \cite[\S 2.15]{Va1984} and \cite[Theorem 4.1]{EtMu1994}). By using this, it can be checked that
\begin{equation}\label{eq:xitopartiallocal}
P\times\mathfrak g\simeq VP,\qquad((x^\mu,y^\alpha),\xi^\alpha B_\alpha)\mapsto a_\beta^\alpha\xi^\beta\partial_\alpha,
\end{equation}
where
$$
a_\beta^\alpha(y^1,\ldots, y^m)=f_{C 1_{\beta}}^\alpha y^C.
$$
Note that for $y=(x,e)$ we have $a_\beta^\alpha(0,\ldots, 0)=\delta_\beta^\alpha$, $1\leq\alpha\leq m$. Hence, the inverse of the matrix $(a_\beta^\alpha(y^\alpha))$ is well defined in a neighbourhood of $y=(x,e)$, and we denote it by $(b_\beta^\alpha(y^1,\ldots, y^m))$. In such case, the inverse of the previous isomorphism is given by
\begin{equation}\label{eq:partialtoxilocal}
VP\simeq P\times\mathfrak g,\qquad (y,U^\alpha\partial_\alpha)\mapsto(y,b_\beta^\alpha U^\beta B_\alpha).
\end{equation}

We pick bundle coordinates $(x^\mu,{A}_\mu^\alpha)$ and $(x^\mu,F_{\mu\nu}^\alpha)$ for $\pi_{C(P),X}$ and $\pi_{\bigwedge^2 T^*X\otimes \ad(P),X}$, respectively. Similarly, we consider the corresponding bundle coordinates $(x^\mu,y^\alpha;y_J^\alpha)$, $(x^\mu,{A}_\mu^\alpha;{A}_{\mu,J}^\alpha)$ and $(x^\mu,F_{\mu\nu}^\alpha;F_{\mu\nu,J}^\alpha)$ for $J^k P$, $J^{k-1}C(P)$ and $J^{k-2}\left(\bigwedge^2 T^*X\otimes \ad(P)\right)$, respectively. In order to keep the notation simple, we will write $y_{J_0}^\alpha=y^\alpha$, ${A}_{\mu,J_0}^\alpha={A}_\mu^\alpha$ and $F_{\mu\nu,J_0}^\alpha=F_{\mu\nu}^\alpha$ for $J_0=(0,\dots,0)$.

\begin{remark}
Given two multi-indices $I$, $J$, the inequality $I\leq J$ means that $I_\mu\leq J_\mu$ for $1\leq\mu\leq n$. Likewise, we denote $I<J$ when $I\leq J$ and $I\neq J$, that is, $I_\mu\leq J_\mu$ for $1\leq\mu\leq n$ and there exists $\mu_0\in\{1,\dots,n\}$ such that $I_{\mu_0}<J_{\mu_0}$.
\end{remark}

Let us compute the coordinated expression of $\Theta_k$.

\begin{proposition}\label{prop:Thetakcoordinates}
For each $k\geq 2$ and each $(x^\mu,y^\alpha;y_J^\alpha)\in J^k P$ with $|y^\alpha|$ small enough, $1\leq\alpha\leq m$, we have $\Theta_k(x^\mu,y^\alpha;y_J^\alpha)=(x^\mu,{A}_\mu^\alpha;{A}_{\mu,J}^\alpha)$, where\footnote{
By abusing the notation, for each multi-index $J$ we denote
\begin{equation*}
\left.\frac{\partial^J b_\beta^\alpha}{\partial x^J}(y)\right|_x=\left.\frac{\partial b_\beta^\alpha}{\partial y^\gamma}(y)\right|_{y^\alpha=0}y_J^\gamma,\qquad1\leq\alpha,\beta\leq m.
\end{equation*}}
\begin{equation*}
{A}_\mu^\alpha=b_\beta^\alpha(y^\alpha)y_{1_\mu}^\beta,\quad{A}_{\mu,J}^\alpha=\left.\frac{\partial^{|J|}}{\partial x^J}\left(b_\beta^\alpha(y^\alpha)y_{1_\mu}^\beta\right)\right|_x,\quad1\leq\mu\leq n,~1\leq\alpha\leq m,~1\leq|J|\leq k-1.
\end{equation*}
\end{proposition}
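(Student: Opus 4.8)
The plan is to compute $\Theta_k(j_x^k s)$ directly from its definition $\Theta_k(j_x^k s) = j_x^{k-1}[j^1 s]_G$ in the chosen normal bundle coordinates, reducing everything to (a) identifying the connection coordinates ${A}_\mu^\alpha$ of the section $\sigma_{A} = [j^1 s]_G$ at a single point, and (b) prolonging to order $k-1$ via total derivatives along holonomic sections. For step (a), note that $\Theta_k$ factors through $J^1 P$ first: since $\pi_{k,1}(j_x^k s) = j_x^1 s$ depends only on the $1$-jet, it suffices to express the canonical projection $\pi_1\colon J^1 P \to C(P)$ in coordinates near the identity of $G$. Writing $s$ locally as $s(x) = (x, y^\alpha(x))$ so that $j_x^1 s = (x^\mu, y^\alpha, y_{1_\mu}^\alpha)$, the associated local connection $A$ is the one whose horizontal lift is $d(R_g \circ s)_x$ by \eqref{eq:horizontalliftsecion} in the proof of Lemma \ref{lemma:holonomicflat}, and in the trivialization induced by $s$ this is the canonical flat connection. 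Translating this flat connection back to the fixed normal trivialization $(x^\mu, y^\alpha)$ using the isomorphism \eqref{eq:partialtoxilocal} (which converts a vertical vector $U^\alpha \partial_\alpha$ into the Lie-algebra element $b_\beta^\alpha U^\beta B_\alpha$) gives precisely ${A}_\mu^\alpha = b_\beta^\alpha(y^\alpha) y_{1_\mu}^\alpha$, since the connection form evaluated on $\partial_\mu + y_{1_\mu}^\beta \partial_\beta$ reads off the vertical component $y_{1_\mu}^\beta \partial_\beta$, whose image under \eqref{eq:partialtoxilocal} is $b_\beta^\alpha y_{1_\mu}^\beta B_\alpha$. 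This is the base case.

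For step (b), I would invoke the fact that $\Theta_k$ is (by the first part of Proposition \ref{prop:welldefined}) the composite of the natural embedding $J^k P \hookrightarrow J^{k-1}(J^1 P)$ with $j^{k-1}\pi_1$. Concretely, if $j_x^k s \in J^k P$ with $s$ a holonomic representative, then $\Theta_k(j_x^k s) = j_x^{k-1}(\pi_1 \circ j^1 s) = j_x^{k-1}\sigma_{A}$, where $\sigma_{A}$ is the local section of $C(P)$ whose $\mu$-component is $x \mapsto b_\beta^\alpha(y^\alpha(x)) \frac{\partial y^\beta}{\partial x^{\mu}}(x)$ by step (a). Taking the $(k-1)$-jet of this section means differentiating this expression up to order $k-1$ in $x$; by \eqref{eq:totalderivativesection} (or directly by the definition of jet coordinates) the coordinate ${A}_{\mu,J}^\alpha$ of $j_x^{k-1}\sigma_{A}$ is $\frac{\partial^{|J|}}{\partial x^J}\big(b_\beta^\alpha(y^\beta(x)) \frac{\partial y^\beta}{\partial x^\mu}(x)\big)\big|_x$ for $1 \le |J| \le k-1$. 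Since all the partial derivatives $\frac{\partial^{|I|} y^\beta}{\partial x^I}$ for $|I| \le k$ are exactly the fiber coordinates $y_I^\beta$ of the point $j_x^k s$, the right-hand side is a function of $(x^\mu, y^\alpha; y_J^\alpha)$ alone — this is the content of the footnote's abuse of notation, where $\frac{\partial^J b_\beta^\alpha}{\partial x^J}(y)|_x$ means the chain-rule expansion evaluated on the jet. One checks the highest order $|J| = k-1$ involves $y_{J+1_\mu}^\beta$, which has length $\le k$, so everything stays within $J^k P$; hence $\Theta_k$ is indeed given by the stated formula.

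The smallness hypothesis on $|y^\alpha|$ is needed precisely because $(b_\beta^\alpha)$ is only defined as the inverse of $(a_\beta^\alpha)$ in a neighborhood of $y = 0$ (i.e. of the identity in $G$), per the discussion preceding the proposition. I expect the main obstacle to be the careful bookkeeping in step (a): one must be sure that the normal-coordinate description of $\pi_1$ via the BCH constants $f_{CD}^\alpha$ and the resulting matrices $a_\beta^\alpha, b_\beta^\alpha$ matches the intrinsic definition of the connection $[j^1 s]_G$, and in particular that the identification $P \times \mathfrak{g} \simeq VP$ from \eqref{eq:xitopartiallocal} is the one consistent with right-translation. Once the point-value ${A}_\mu^\alpha = b_\beta^\alpha(y^\alpha) y_{1_\mu}^\alpha$ is pinned down, the prolongation in step (b) is a purely formal consequence of how $j^{k-1}$ acts in jet coordinates together with \eqref{eq:totalderivativesection}, and requires no further geometric input.
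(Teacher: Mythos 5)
Your proposal is correct and follows essentially the same route as the paper's proof: identify ${A}_\mu^\alpha=b_\beta^\alpha(y^\alpha)y_{1_\mu}^\beta$ from the horizontal lift \eqref{eq:horizontalliftsecion} together with the identification \eqref{eq:partialtoxilocal}, and then obtain the higher coordinates ${A}_{\mu,J}^\alpha$ by differentiating along the holonomic representative. The only blemishes are cosmetic: the index typo $y_{1_\mu}^\alpha$ for $y_{1_\mu}^\beta$ in step (a), and the slightly loose phrase about the connection form "reading off" the vertical component of the horizontal lift (on which it in fact vanishes) — the intended meaning, that the fiber coordinate of $\sigma_{A}(x)$ records that vertical component converted to a Lie-algebra element, is what the paper uses.
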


\begin{proof}
We denote $j_x^1 s=(x^\mu,y^\alpha;y_J^\alpha)$. Recall from \eqref{eq:horizontalliftsecion} that the horizontal lift given by $[j^1 s]_G\in C(P)$ reads $Hor_{s(x)}^{A}=dx^\mu\otimes\partial_\mu+y_{1_\mu}^\alpha dx^\mu\otimes\partial_\alpha$. From this and \eqref{eq:partialtoxilocal}, we get
\begin{equation*}
{A}_{s(x)}=b_\beta^\alpha(y^\alpha)\left(-y_{1_\mu}^\beta dx^\mu+dy^\alpha\right)\otimes B_\alpha\in T_{s(x)}^*P\otimes\mathfrak g.
\end{equation*}
Then $[j^1 s]_G=(x^\mu,{A}_\mu^\alpha=b_\beta^\alpha(y^\alpha) y_{1_\mu}^\beta)$. By taking partial derivatives, we conclude.
\end{proof}

Now some technical lemmas are presented.

\begin{lemma}\label{lemma:curvaturecoordinates}
Let $k\geq 2$ and $j_x^{k-1}\sigma_{A}=(x^\mu,{A}_\mu^\alpha;{A}_{\mu,J}^\alpha)\in J^{k-1}C(P)$. Then $j_x^{k-2}\tilde F^A=\left(x^\mu,F_{\mu\nu}^\alpha;F_{\mu\nu,J}^\alpha\right)\in J^{k-2}\left(\bigwedge^2 T^*X\otimes \ad(P)\right)$ is given by
\begin{equation}\label{eq:curvaturelocal}
F_{\mu\nu,J}^\alpha=\frac{1}{2}\left({A}_{\nu,J+1_\mu}^\alpha-{A}_{\mu,J+1_\nu}^\alpha+\sum_{I\leq J}\binom{J}{I}c_{\beta\gamma}^\alpha{A}_{\mu,I}^\beta{A}_{\nu,J-I}^\gamma\right),
\end{equation}
for each $1\leq\mu,\nu\leq n$, $0\leq|J|\leq k-2$ and $1\leq\alpha\leq m$, where we denote by $c_{\beta\gamma}^\alpha=2 f_{1_\beta 1_\gamma}^\alpha$, $1\leq\alpha,\beta,\gamma\leq m$, the structure constants of $\mathfrak g$.
\end{lemma}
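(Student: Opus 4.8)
The plan is to establish the curvature formula \eqref{eq:curvaturelocal} by first working out the case $|J|=0$ from the classical structure equation and then extending it to arbitrary $|J|\leq k-2$ by repeatedly applying the coordinate total derivative, using the fact that the coordinates $A^\alpha_{\mu,J}$ on $J^{k-1}C(P)$ and $F^\alpha_{\mu\nu,J}$ on $J^{k-2}(\bigwedge^2T^*X\otimes\ad(P))$ are precisely the iterated total derivatives of the order-zero coordinates. Concretely, for a local connection $A = A^\alpha_\mu\,dx^\mu\otimes B_\alpha$ in the trivializing chart, its curvature as an $\ad(P)$-valued $2$-form is $\tilde F^A = \tfrac12 F^\alpha_{\mu\nu}\,dx^\mu\wedge dx^\nu\otimes B_\alpha$ with $F^\alpha_{\mu\nu} = \partial_\mu A^\alpha_\nu - \partial_\nu A^\alpha_\mu + c^\alpha_{\beta\gamma}A^\beta_\mu A^\gamma_\nu$ (the factor $\tfrac12$ and the normalization $c^\alpha_{\beta\gamma}=2f^\alpha_{1_\beta 1_\gamma}$ being exactly the conventions fixed by the identification \eqref{eq:xitopartiallocal}--\eqref{eq:partialtoxilocal} and the skew-symmetrization built into $\bigwedge^2 T^*X$); this is the $J=J_0$ instance of the claimed formula since the sum over $I\leq J_0$ collapses to the single term $I=J_0$.

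Next I would pass to the jet coordinates. By definition of the induced coordinates on $J^{k-2}\big(\bigwedge^2 T^*X\otimes\ad(P)\big)$, for a section $\sigma_A$ one has $F^\alpha_{\mu\nu,J}\circ j^{k-2}\tilde F^A = \partial^{|J|}(F^\alpha_{\mu\nu}\circ\sigma_A)/\partial x^J$, and likewise $A^\alpha_{\mu,J}\circ j^{k-1}\sigma_A = \partial^{|J|}(A^\alpha_\mu\circ\sigma_A)/\partial x^J$; equivalently, the $F^\alpha_{\mu\nu,J}$ are obtained from $F^\alpha_{\mu\nu}$ by applying $d^{|J|}/dx^J$ and re-expressing everything in terms of the $A^\alpha_{\mu,I}$. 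So I apply $\partial^{|J|}/\partial x^J$ to the order-zero identity $F^\alpha_{\mu\nu}\circ\sigma_A = \partial_\mu A^\alpha_\nu - \partial_\nu A^\alpha_\mu + c^\alpha_{\beta\gamma}A^\beta_\mu A^\gamma_\nu$. The linear terms differentiate to $A^\alpha_{\nu,J+1_\mu} - A^\alpha_{\mu,J+1_\nu}$ directly (differentiating once more in the $x^\mu$, resp. $x^\nu$, direction shifts the multi-index by $1_\mu$, resp. $1_\nu$). For the quadratic term $c^\alpha_{\beta\gamma}A^\beta_\mu A^\gamma_\nu$, the ordinary multivariable Leibniz rule — Lemma \ref{lemma:higherleibniz} with two factors — gives $\partial^{|J|}/\partial x^J(A^\beta_\mu A^\gamma_\nu) = \sum_{I\leq J}\binom{J}{I}(\partial^{|I|}A^\beta_\mu/\partial x^I)(\partial^{|J-I|}A^\gamma_\nu/\partial x^{J-I}) = \sum_{I\leq J}\binom{J}{I}A^\beta_{\mu,I}A^\gamma_{\nu,J-I}$, which is exactly the sum appearing in \eqref{eq:curvaturelocal}. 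Dividing by the overall factor of $2$ produces the stated formula, and since the identity holds along every jet of a section $\sigma_A$ and these fill up $J^{k-1}C(P)$ pointwise, it holds as an identity of coordinate functions.

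The main obstacle — really the only nontrivial point — is bookkeeping the conventions so that the factor $\tfrac12$ and the constant $c^\alpha_{\beta\gamma}=2f^\alpha_{1_\beta 1_\gamma}$ come out consistently: one must check that the curvature coordinates $F^\alpha_{\mu\nu}$ on $\bigwedge^2 T^*X\otimes\ad(P)$ are the components relative to the full (non-ordered) basis $dx^\mu\wedge dx^\nu$ with the $\tfrac12$ convention, and that the structure constants of $\mathfrak g$ read off from the Baker--Campbell--Hausdorff constants through \eqref{eq:xitopartiallocal} are indeed $c^\alpha_{\beta\gamma}=2f^\alpha_{1_\beta 1_\gamma}$ (equivalently, $[B_\beta,B_\gamma]=c^\alpha_{\beta\gamma}B_\alpha$ with $f^\alpha_{1_\beta 1_\gamma}-f^\alpha_{1_\gamma 1_\beta}=c^\alpha_{\beta\gamma}$ and the symmetric part of $f$ being absorbed). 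Once the $|J|=0$ case is pinned down with the correct normalizations, the induction step is a one-line application of Lemma \ref{lemma:higherleibniz} and requires no further care. I would therefore spend most of the written proof on the order-zero verification and then simply state that the general case follows by applying $\partial^{|J|}/\partial x^J$ and invoking the Leibniz rule.
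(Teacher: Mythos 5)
Your proposal is correct. It shares the paper's starting point --- the order-zero structure equation $F_{\mu\nu}^\alpha=\tfrac12\bigl(A_{\nu,1_\mu}^\alpha-A_{\mu,1_\nu}^\alpha+c_{\beta\gamma}^\alpha A_\mu^\beta A_\nu^\gamma\bigr)$, together with the observation that the jet coordinates $F^\alpha_{\mu\nu,J}$ are the iterated partial derivatives of $F^\alpha_{\mu\nu}\circ\sigma_A$ --- but the execution differs in the key tool. The paper proves the formula by induction on $|J|$, differentiating once in a direction $x^\sigma$ at each step and verifying by hand, via the multi-index Pascal identity $\binom{J+1_\sigma}{I}=\binom{J}{I}+\binom{J}{I-1_\sigma}$, that the binomial coefficients in the quadratic term recombine correctly. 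You instead apply $\partial^{|J|}/\partial x^J$ in one shot and invoke the two-factor case of the higher-order Leibniz rule (Lemma \ref{lemma:higherleibniz}, which the paper has already stated) to produce $\sum_{I\leq J}\binom{J}{I}A^\beta_{\mu,I}A^\gamma_{\nu,J-I}$ directly. Your route is shorter and reuses an existing lemma rather than re-deriving its combinatorial content inline; the paper's induction is self-contained but amounts to unrolling that same Leibniz argument. Your closing remark --- that checking the identity along $(k-1)$-jets of sections suffices because such jets exhaust $J^{k-1}C(P)$ --- is the right justification for promoting the pointwise computation to an identity of coordinate functions, and your flagging of the $\tfrac12$ and $c^\alpha_{\beta\gamma}=2f^\alpha_{1_\beta1_\gamma}$ normalizations as the only delicate bookkeeping matches where the actual care is needed.
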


\begin{proof}
We show it by induction in $0\leq |J|\leq k-2$. The case $|J|=0$ is straightforward, since the curvature of $j_x^1\sigma_{A}=(x^\mu,{A}_\mu^\alpha;{A}_{\mu,1_\nu}^\alpha)$ is given by (cf. \cite[Chapter 1, \S 4]{Sa2013})
\begin{equation*}
F_{\mu\nu}^\alpha=\frac{1}{2}\left({A}_{\nu,1_\mu}^\alpha-{A}_{\mu,1_\nu}^\alpha+c_{\beta\gamma}^\alpha{A}_\mu^\beta{A}_\nu^\gamma\right).
\end{equation*}
Now assume that \eqref{eq:curvaturelocal} holds for $1\leq|J|(<k-2)$. To conclude, we need to show that it holds for $J'=J+1_\sigma$ for each $1\leq\sigma\leq n$. Note that
\begin{equation*}
\binom{J'}{I }=\binom{J+1_\sigma}{I }=\binom{J}{I }+\binom{J}{I -1_\sigma},\qquad 1_\sigma\leq I \leq J.
\end{equation*}
Furthermore, note that 
\begin{equation*}
\sum_{\substack{I\leq J,\\I_\sigma=0}}\binom{J'}{I}=\sum_{\substack{I\leq J,\\I_\sigma=0}}\binom{J}{I},\qquad\sum_{\substack{I\leq J',\\I_\sigma=J_\sigma+1}}\binom{J'}{I}=\sum_{\substack{I\leq J',\\I_\sigma=J_\sigma+1}}\binom{J}{I-1_\sigma}.
\end{equation*}
Hence,
\begin{equation*}
\sum_{I \leq J'}\binom{J'}{I }=\sum_{\substack{I\leq J,\\I_\sigma=0}}\binom{J}{I}+\sum_{1_\sigma\leq I \leq J}\left(\binom{J}{I }+\binom{J}{I -1_\sigma}\right)+\sum_{\substack{I\leq J',\\I_\sigma=J_\sigma+1}}\binom{J}{I-1_\sigma}=\sum_{I\leq J}\binom{J}{I}+\sum_{1_\sigma\leq I\leq J'}\binom{J}{I-1_\sigma}.
\end{equation*}
By using this and by taking the partial derivative of \eqref{eq:curvaturelocal} with respect to $x^\sigma$, we finish
\begin{align*}
F_{\mu\nu,J'}^\alpha & =\frac{1}{2}\left({A}_{\nu,J'+1_\mu}^\alpha-{A}_{\mu,J'+1_\nu}^\alpha+\sum_{I\leq J}\binom{J}{I}c_{\beta\gamma}^\alpha\left({A}_{\mu,I+1_\sigma}^\beta{A}_{\nu,J-I}^\gamma+{A}_{\mu,I}^\beta{A}_{\nu,J-I+1_\sigma}^\gamma\right)\right)\\
& =\frac{1}{2}\left({A}_{\nu,J'+1_\mu}^\alpha-{A}_{\mu,J'+1_\nu}^\alpha+\sum_{1_\sigma\leq I \leq J'}\binom{J}{I -1_\sigma}c_{\beta\gamma}^\alpha{A}_{\mu,I }^\beta{A}_{\nu,J'-I }^\gamma+\sum_{I\leq J}\binom{J}{I}c_{\beta\gamma}^\alpha{A}_{\mu,I}^\beta{A}_{\nu,J'-I}^\gamma\right)\\
& =\frac{1}{2}\left({A}_{\nu,J'+1_\mu}^\alpha-{A}_{\mu,J'+1_\nu}^\alpha+\sum_{I \leq J'}\binom{J'}{I }c_{\beta\gamma}^\alpha{A}_{\mu,I }^\beta{A}_{\nu,J'-I }^\gamma\right).
\end{align*}
\end{proof}

\begin{lemma}\label{lemma:liebracketbs}
Let $k\geq 2$ and $g=\exp(y^\alpha B_\alpha)\in G$ with $|y^\alpha|$ small enough, $1\leq\alpha\leq m$. Then
\begin{equation*}
c_{\beta\gamma}^\lambda b_\alpha^\beta(y)b_\epsilon^\gamma(y)=\frac{\partial b_\alpha^\lambda}{\partial y^\epsilon}(y)-\frac{\partial b_\epsilon^\lambda}{\partial y^\alpha}(y),\qquad1\leq\alpha,\epsilon,\lambda\leq m.
\end{equation*}
\end{lemma}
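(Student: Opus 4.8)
The plan is to recognize the claimed identity as the Maurer--Cartan structure equation written out in the normal coordinates $(y^\alpha)$, and to derive it by differentiating the defining relation between the matrices $(a^\alpha_\beta)$ and $(b^\alpha_\beta)$. Recall that $(b^\alpha_\beta(y))$ is the inverse matrix of $(a^\alpha_\beta(y))$, where $a^\alpha_\beta(y)=f^\alpha_{C1_\beta}y^C$ comes from the Baker--Campbell--Hausdorff expansion, and that via \eqref{eq:xitopartiallocal}--\eqref{eq:partialtoxilocal} the columns of $(a^\alpha_\beta)$ are precisely the components (in the $\partial_\alpha$ frame) of the infinitesimal generators, i.e.\ the left-invariant vector fields $e_\beta := a^\alpha_\beta(y)\partial_\alpha$ form a basis of left-invariant vector fields on the normal coordinate chart with $[e_\beta,e_\gamma]=c^\lambda_{\beta\gamma}e_\lambda$ (using that $c^\lambda_{\beta\gamma}=2f^\lambda_{1_\beta 1_\gamma}$ are the structure constants, as fixed in Lemma \ref{lemma:curvaturecoordinates}). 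Dually, the left-invariant Maurer--Cartan one-forms are $\theta^\lambda := b^\lambda_\alpha(y)\,dy^\alpha$, and they satisfy the structure equation ${\rm d}\theta^\lambda = -\tfrac12 c^\lambda_{\beta\gamma}\,\theta^\beta\wedge\theta^\gamma$.

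First I would make this precise: from $b^\lambda_\alpha a^\alpha_\beta = \delta^\lambda_\beta$ and the bracket relation $[e_\beta, e_\gamma] = c^\lambda_{\beta\gamma} e_\lambda$, a direct computation of the bracket $[a^\cdot_\beta\partial_\cdot, a^\cdot_\gamma\partial_\cdot]$ in coordinates gives
\begin{equation*}
a^\alpha_\beta\frac{\partial a^\delta_\gamma}{\partial y^\alpha} - a^\alpha_\gamma\frac{\partial a^\delta_\beta}{\partial y^\alpha} = c^\lambda_{\beta\gamma}\,a^\delta_\lambda.
\end{equation*}
Then I would expand ${\rm d}\theta^\lambda$ directly: ${\rm d}\theta^\lambda = \tfrac{\partial b^\lambda_\alpha}{\partial y^\epsilon}\,dy^\epsilon\wedge dy^\alpha = \tfrac12\big(\tfrac{\partial b^\lambda_\alpha}{\partial y^\epsilon}-\tfrac{\partial b^\lambda_\epsilon}{\partial y^\alpha}\big)dy^\epsilon\wedge dy^\alpha$, while $-\tfrac12 c^\lambda_{\beta\gamma}\theta^\beta\wedge\theta^\gamma = -\tfrac12 c^\lambda_{\beta\gamma}b^\beta_\epsilon b^\gamma_\alpha\,dy^\epsilon\wedge dy^\alpha$. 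Comparing coefficients of $dy^\epsilon\wedge dy^\alpha$ (antisymmetrized in $\epsilon,\alpha$) yields exactly
\begin{equation*}
\frac{\partial b^\lambda_\alpha}{\partial y^\epsilon}-\frac{\partial b^\lambda_\epsilon}{\partial y^\alpha} = -\,c^\lambda_{\beta\gamma}b^\beta_\epsilon b^\gamma_\alpha = c^\lambda_{\beta\gamma}b^\beta_\alpha b^\gamma_\epsilon,
\end{equation*}
the last equality by the antisymmetry $c^\lambda_{\beta\gamma}=-c^\lambda_{\gamma\beta}$ and relabeling, which is the assertion.

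Alternatively — and this may be the cleaner route to present — I would avoid invoking the Maurer--Cartan formalism and instead derive the identity purely algebraically: differentiate $b^\lambda_\delta a^\delta_\beta = \delta^\lambda_\beta$ with respect to $y^\epsilon$ to express $\partial b^\lambda_\alpha/\partial y^\epsilon$ in terms of $b$ and $\partial a/\partial y$, substitute into the left-hand side of the claim, and then reduce to the coordinate bracket identity displayed above for the $a$'s. The one genuine input is that the vector fields $e_\beta=a^\alpha_\beta\partial_\alpha$ close under the Lie bracket with the structure constants $c^\lambda_{\beta\gamma}$; I would justify this by noting that under the isomorphism \eqref{eq:xitopartiallocal} these are the infinitesimal generators $B_\beta^*$ of the right action read in normal coordinates — equivalently the left-invariant vector fields on $G$ — so $[B_\beta^*,B_\gamma^*]$ corresponds to $[B_\beta,B_\gamma]=c^\lambda_{\beta\gamma}B_\lambda$ by standard properties of principal bundles. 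I expect the main obstacle to be purely bookkeeping: getting the sign and the left-versus-right-invariance conventions consistent with the BCH constants $f^\alpha_{CD}$ fixed earlier, since a sign error there would flip the structure equation; once the convention $c^\lambda_{\beta\gamma}=2f^\lambda_{1_\beta1_\gamma}$ is pinned down as in Lemma \ref{lemma:curvaturecoordinates}, the rest is a short computation.
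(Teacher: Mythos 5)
Your proposal is correct, and your ``alternative'' route is in fact exactly the paper's proof: the authors take the coordinate bracket identity $a_\beta^\alpha\,\partial a_\gamma^\delta/\partial y^\alpha-a_\gamma^\alpha\,\partial a_\beta^\delta/\partial y^\alpha=c_{\beta\gamma}^\lambda a_\lambda^\delta$ for the infinitesimal generators $(B_\beta)^*=a_\beta^\alpha\partial_\alpha$, differentiate $a_\gamma^\delta b_\delta^\beta=\delta_\gamma^\beta$ to trade $\partial a/\partial y$ for $\partial b/\partial y$, substitute, and contract with $b$'s to isolate the claimed formula. Your primary route via the Maurer--Cartan structure equation ${\rm d}\theta^\lambda=-\tfrac12 c_{\beta\gamma}^\lambda\,\theta^\beta\wedge\theta^\gamma$ for the dual coframe $\theta^\lambda=b_\alpha^\lambda\,dy^\alpha$ is the same computation in dual form, and your coefficient comparison (including the relabeling $-c_{\beta\gamma}^\lambda b_\epsilon^\beta b_\alpha^\gamma=c_{\beta\gamma}^\lambda b_\alpha^\beta b_\epsilon^\gamma$) comes out with the right sign. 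What the Maurer--Cartan packaging buys is conceptual: it identifies the lemma as a standard identity rather than an ad hoc matrix manipulation. What it costs is precisely the bookkeeping you flag: one must justify that the generators of the \emph{right} action satisfy $[\xi^*,\eta^*]=[\xi,\eta]^*$ (no sign flip --- they are the left-invariant fields on the $G$-factor), which the paper's direct computation takes as its single input and never has to revisit. Either presentation is sound; the one genuine input is the same in both.
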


\begin{proof}
For each $1\leq\beta,\gamma\leq m$ and $y=(x^\mu,y^\alpha)\in P$, the infinitesimal generator is given in coordinates by $(B_\beta)_y^*=a_\beta^\alpha(y)\partial_\alpha$ and, from the equality $[B_\beta,B_\gamma]=c_{\beta\gamma}^\alpha B_\alpha$, they satisfy
\begin{equation}\label{eq:corchetescoordenadas}
c_{\beta\gamma}^\alpha a_\alpha^\delta(t)\partial_\delta=\left[a_\beta^\alpha(y)\partial_\alpha,a_\gamma^\delta(y)\partial_\delta\right]=\left(a_\beta^\alpha(y)\frac{\partial a_\gamma^\delta}{\partial y^\alpha}(y)-a_\gamma^\alpha\frac{\partial a_\beta^\delta(y^\alpha)}{\partial y^\alpha}(y)\right)\partial_\delta.
\end{equation}
Since $\left(a_\beta^\alpha(y)\right)\left(b_\beta^\alpha(y)\right)=\mathbb I$, the identity matrix, we have $a_\gamma^\delta(y)b_\delta^\beta(y)=\delta_\gamma^\beta$ and, thus,
\begin{equation*}
\frac{\partial a_\gamma^\delta}{\partial y^\alpha} (y)b_\delta^\beta(y)+a_\gamma^\delta(y)\frac{\partial b_\delta^\beta(y^\alpha)}{\partial y^\alpha}(y)=0,\qquad1\leq\alpha,\beta,\gamma\leq m.
\end{equation*}
As a result,
\begin{equation*}
\frac{\partial a_\gamma^\delta}{\partial y^\alpha}(y)=-a_\beta^\delta(y)a_\gamma^\epsilon(y)\frac{\partial b_\epsilon^\beta}{\partial y^\alpha}(y),\qquad1\leq\alpha,\gamma,\delta\leq m.
\end{equation*}
By substituting this into \eqref{eq:corchetescoordenadas} and rearranging indices, we get
\begin{equation*}
c_{\beta\gamma}^\kappa a_\kappa^\delta(y)=a_\beta^\alpha(y)a_\lambda^\delta(y)a_\gamma^\epsilon (y)\left(-\frac{\partial b_\epsilon^\lambda}{\partial y^\alpha}(y)+\frac{\partial b_\alpha^\lambda}{\partial y^\epsilon}(y)\right),\qquad1\leq\beta,\gamma,\delta\leq m.
\end{equation*}
Lastly, we multiply each side by $b_\alpha^{\beta'}(y)b_\delta^{\lambda'}(y)b_\epsilon^{\gamma'}(y)$ and we sum over $\alpha$, $\delta$ and $\epsilon$:
\begin{equation*}
c_{\beta\gamma}^\lambda b_\alpha^\beta(y)b_\epsilon^\gamma(y)=\frac{\partial b_\alpha^\lambda}{\partial y^\epsilon}(y)-\frac{\partial b_\epsilon^\lambda}{\partial y^\alpha}(y),\qquad1\leq\alpha,\epsilon,\lambda\leq m.
\end{equation*}
\end{proof}

We are ready to show the main result about the map $\Xi_k$.

\begin{theorem}\label{theorem:imageandproper}
For each $k\geq 2$, we have that:
\begin{enumerate}[(i)]
    \item $\operatorname{im}\Xi_k=\ker j^{k-2}\tilde F\subset J^{k-1}C(P)$ and,
    \item $\Xi_k$ is a proper map.
\end{enumerate}
\end{theorem}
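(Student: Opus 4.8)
The plan is to dispatch the two statements together, the engine for both being an explicit local inversion of $\Xi_k$ over its image.

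\emph{Part (i).} By Proposition~\ref{prop:Psikproperties} we already know $\operatorname{im}\Xi_k\subseteq\ker j^{k-2}\tilde F$, so only the reverse inclusion is at stake, and it is a local statement over $X$. I would fix a trivializing chart $P|_{\mathcal U}\simeq\mathcal U\times G$ with the normal bundle coordinates $(x^\mu,y^\alpha)$ used above, a point $x\in\mathcal U$, and $j_x^{k-1}\sigma_{A}=(x^\mu,{A}_\mu^\alpha;{A}_{\mu,J}^\alpha)\in\ker j^{k-2}\tilde F$, which by Lemma~\ref{lemma:curvaturecoordinates} means $F_{\mu\nu,J}^\alpha=0$ for all $|J|\leq k-2$. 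Since $\Xi_k$ lives on the quotient, it is enough to produce \emph{some} representative $j_x^k s$, and I would look for a polynomial local section $s$ of degree $k$ with $s^\alpha(x)=0$ such that, by Proposition~\ref{prop:Thetakcoordinates},
\begin{equation*}
\left.\frac{\partial^{|J|}}{\partial x^J}\left(b_\beta^\alpha(s)\,\partial_\mu s^\beta\right)\right|_x={A}_{\mu,J}^\alpha,\qquad 0\leq|J|\leq k-1,\quad 1\leq\mu\leq n,\ 1\leq\alpha\leq m.
\end{equation*}

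Expanding the left-hand side with the higher-order Leibniz rule (Lemma~\ref{lemma:higherleibniz}) and using $b^\alpha_\beta(0)=\delta^\alpha_\beta$, each such equation takes the triangular form $s^\alpha_{J+1_\mu}={A}_{\mu,J}^\alpha+(\text{a universal expression in the }s^\beta_I\text{ with }|I|\leq|J|)$. I would therefore determine the Taylor coefficients $s^\alpha_K$ recursively on $|K|=0,1,\dots,k$. The only point requiring work is consistency: for $|K|\geq 2$ and $\mu\neq\nu$ with $K=J+1_\mu=J'+1_\nu$, the two prescriptions for $s^\alpha_K$ must coincide, i.e.\ their difference $\bigl({A}_{\nu,K-1_\nu}^\alpha-{A}_{\mu,K-1_\mu}^\alpha\bigr)+(\text{lower-order terms})$ must vanish. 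By the curvature recursion of Lemma~\ref{lemma:curvaturecoordinates}, $F^\alpha_{\mu\nu,\,K-1_\mu-1_\nu}=0$ rewrites the antisymmetrized top term in terms of lower-order $A$'s, and matching what remains against the lower-order terms is exactly the identity $c^\alpha_{\beta\gamma}b^\beta_\lambda(y)b^\gamma_\epsilon(y)=\partial b^\alpha_\lambda/\partial y^\epsilon-\partial b^\alpha_\epsilon/\partial y^\lambda$ of Lemma~\ref{lemma:liebracketbs} (and its $x$-derivatives, pulled back along $s$). Thus the hypothesis $j_x^{k-2}\tilde F^A=j_x^{k-2}\hat 0$ is precisely what makes the recursion consistent, the construction goes through, and $\ker j^{k-2}\tilde F\subseteq\operatorname{im}\Xi_k$; together with Proposition~\ref{prop:Psikproperties}(ii) this gives the asserted equality.

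\emph{Part (ii).} The recursion of Part~(i) assigns, in each trivializing chart, to every $j_x^{k-1}\sigma_{A}\in\ker j^{k-2}\tilde F$ the coefficients $s^\alpha_K$, which are (polynomial, hence smooth) functions of $({A}_\mu^\alpha,{A}_{\mu,J}^\alpha)$; composing with $J^kP\to(J^kP)/G$ gives a continuous map $\ker j^{k-2}\tilde F\to(J^kP)/G$ that, by construction and by the injectivity of $\Xi_k$ (Proposition~\ref{prop:Psikproperties}(i)), is a two-sided inverse of $\Xi_k$. Hence $\Xi_k$ is a homeomorphism onto $\ker j^{k-2}\tilde F$. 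This set is closed in $J^{k-1}C(P)$, being the preimage of the (closed) image of the section $j^{k-2}\hat 0$ under the continuous map $j^{k-2}\tilde F$. A homeomorphism onto a closed subset is a proper map; and since properness is local on the target, and a bundle morphism over $X$ restricts to the trivializing charts $P|_{\mathcal U}$, which cover, $\Xi_k$ is proper. (In fact the same inverse map shows $\Xi_k$ is a diffeomorphism onto $\ker j^{k-2}\tilde F$, which is the content needed for the subsequent corollary.)

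The main obstacle is the bookkeeping in Part~(i): one must track exactly how the lower-order remainder produced by differentiating $b^\alpha_\beta(s)\,\partial_\mu s^\beta$ cancels against the curvature recursion and against the derivatives of Lemma~\ref{lemma:liebracketbs}. The order-one case is a one-line computation (it uses only Lemma~\ref{lemma:liebracketbs} at $y=0$, where it reduces to $c^\alpha_{\beta\gamma}=\partial b^\alpha_\beta/\partial y^\gamma(0)-\partial b^\alpha_\gamma/\partial y^\beta(0)$), but the general inductive step needs the multivariable Leibniz rule handled with care. A more conceptual route to Part~(i) is to first extend $j_x^{k-1}\sigma_{A}$ to the germ of a flat connection $\bar A$ near $x$ — possible exactly because its curvature jet vanishes — and then take a local $\bar A$-horizontal section $s$, for which $[j^1 s]_G=\sigma_{\bar A}$ by (the proof of) Lemma~\ref{lemma:holonomicflat}; however, verifying the flat extension amounts to essentially the same recursion, so little is gained.
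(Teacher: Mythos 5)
Your proof is correct and follows essentially the same route as the paper: part (i) is the identical recursive construction of the Taylor coefficients of a representative section, with consistency of the two prescriptions for $s^\alpha_K$ guaranteed by the vanishing curvature jet via Lemmas \ref{lemma:curvaturecoordinates} and \ref{lemma:liebracketbs}. For part (ii) the paper runs that same recursion on a sequence with converging image to extract a converging preimage (after normalizing representatives by the $G$-action), whereas you package it as a continuous inverse onto the closed set $\ker j^{k-2}\tilde F$; the two arguments are equivalent, and your phrasing has the minor advantage of directly yielding the diffeomorphism that the paper re-derives in Corollary \ref{corollary:isomorphismquotient} via the constant-rank theorem.
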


\begin{proof}
To begin with, we show \emph{(i)}. Thanks to Proposition \ref{prop:Psikproperties}\emph{(ii)}, we only need to prove that $\ker j^{k-2}\tilde F\subset\operatorname{im}\Xi_k$. Similarly, the commutativity of \eqref{eq:diagram} yields $\operatorname{im}\Xi_k=\operatorname{im}\Theta_k$.

The fact that the maps are local enables us to work locally. More specifically, we employ the bundle coordinates introduced above. Let $j_x^{k-1}\sigma_{A}=(x^\mu,{A}_\mu^\alpha;{A}_{\mu,J}^\alpha)\in\ker j^{k-2}\tilde F$. We need to find $j_x^k s=(x^\mu,y^\alpha;y_J^\alpha)\in J^k P$, such that $\Theta_k(j_x^k s)=j_x^{k-1}\sigma_{A}$. For that matter, we set $y^\alpha=0$, $1\leq\alpha\leq m$. Besides, we define $y_J^\alpha$, $1\leq |J|\leq k$, by recurrence. Namely, fix $0\leq r\leq k-1$, and assume that $y_J^\alpha$ has been defined for $1\leq\alpha\leq m$, $0\leq|J|\leq r-1$ in such a way that they satisfy
\begin{equation}\label{eq:proof0}
\left.\frac{\partial^{|J|}}{\partial x^J}\left(b_\beta^\alpha(y)y_{1_\mu}^\beta\right)\right|_x={A}_{\mu,J}^\alpha,\qquad 0\leq|J|\leq r-2,~1\leq\mu\leq n,~1\leq\alpha\leq m.
\end{equation}
Now pick $J\in\mathbb N^n$ with $|J|=r$ and consider $1\leq\mu\leq n$ such that $J_\mu>0$. Then we set
\begin{equation*}
y_J^\alpha(\mu)={A}_{\mu,J-1_\mu}^\alpha-\sum_{I<J-1_\mu}\binom{J-1_\mu}{I}\left.\frac{\partial^{|J-1_\mu-I|}b_\beta^\alpha(y)}{\partial x^{J-1_\mu-I}}\right|_x y_{I+1_\mu}^\beta,\qquad1\leq\alpha\leq m.
\end{equation*}
Observe that it makes sense, since the elements $y_J^\alpha$ considered in the RHS are only up to order $r-1$. In addition, for each $1\leq\alpha\leq m$ we have 
\begin{equation*}
\left.\frac{\partial^{|J-1_\mu|}}{\partial x^{J-1_\mu}}\left(b_\beta^\alpha(y)y_{1_\mu}^\beta\right)\right|_x=y_J^\alpha(\mu)+\sum_{I<J-1_\mu}\binom{J-1_\mu}{I}\left.\frac{\partial^{|J-1_\mu-I|}b_\beta^\alpha(y)}{\partial x^{J-1_\mu-I}}\right|_x~y_{I+1_\mu}^\beta={A}_{\mu,J-1_\mu}^\alpha,
\end{equation*}
since $b_\beta^\alpha(y^\alpha=0)=\delta_\beta^\alpha$. Hence, $y_J^\alpha(\mu)$ keeps satisfying \eqref{eq:proof0}.

Let us check that the definition do not depend on the $\mu$ chosen, so we can write $y_J^\alpha=y_J^\alpha(\mu)$. Let $1\leq\nu\leq n$ be another index such that $J_\nu>0$. This enables us to write $J=J'+1_\mu+1_\nu$ for some multi-index $J'$ with $|J'|=r-2$. Hence, $J-1_\mu=J'+1_\nu$ and $J-1_\nu=J'+1_\mu$. Therefore,
\begin{align}\label{eq:proof1}
& \sum_{I<J'+1_\nu}\binom{J'+1_\nu}{I}\left.\frac{\partial^{|J'+1_\nu-I|}b_\beta^\alpha}{\partial x^{J'+1_\nu-I}}(y)\right|_x y_{I+1_\mu}^\beta-\sum_{I<J'+1_\mu}\binom{J'+1_\mu}{I}\left.\frac{\partial^{|J'+1_\mu-I|}b_\beta^\alpha}{\partial x^{J'+1_\mu-I}}(y)\right|_x y_{I+1_\nu}^\beta\\\nonumber
& \hspace{10mm}=\left.\frac{\partial^{|J'+1_\nu|}}{\partial x^{J'+1_\nu}}\left(b_\beta^\alpha(y)y_{1_\mu}^\beta\right)\right|_x-\left.\frac{\partial^{|J'+1_\mu|}}{\partial x^{J'+1_\mu}}\left(b_\beta^\alpha(y)y_{1_\nu}^\beta\right)\right|_x\\\nonumber
& \hspace{10mm}=\frac{\partial^{|J'|}}{\partial (y)x^{J'}}\left(\left.\frac{\partial b_\beta^\alpha}{\partial y^\gamma}\right|_x(y) \, y_{1_\nu}^\gamma y_{1_\mu}^\beta+b_\beta^\alpha(y)y_{1_\mu+1_\nu}^\beta-\left.\frac{\partial b_\beta^\alpha}{\partial y^\gamma}\right|_x(y)\, y_{1_\mu}^\gamma y_{1_\nu}^\beta-b_\beta^\alpha(y)y_{1_\mu+1_\nu}^\beta\right)\\\nonumber
& \hspace{10mm}=\left.\frac{\partial^{|J'|}}{\partial x^{J'}}\left(\left(\frac{\partial b_\gamma^\alpha}{\partial y^\beta}(y)-\frac{\partial b_\beta^\alpha}{\partial y^\gamma}(y)\right)y_{1_\mu}^\gamma y_{1_\nu}^\beta\right)\right|_x
\end{align}
Similarly, by inserting the condition $j_x^{k-1}\sigma_{A}\in\ker j^{k-2}\tilde F$, i.e., $j_x^{k-2}\tilde F^A=j_x^{k-2}\hat 0$, in Lemma \ref{lemma:curvaturecoordinates}, and by using \eqref{eq:proof0}, we obtain
\begin{align}\label{eq:proof2}
{A}_{\mu,J'+1_\nu}^\alpha-{A}_{\nu,J'+1_\mu}^\alpha & =\sum_{I\leq J'}\binom{J'}{I}c_{\lambda\epsilon}^\alpha{A}_{\mu,I}^\lambda{A}_{\nu,J'-I}^\epsilon\\\nonumber
& =\sum_{I\leq J'}\binom{J'}{I}c_{\lambda\epsilon}^\alpha\left.\frac{\partial^{|I|}}{\partial x^I}\left(b_\gamma^\lambda(y)y_{1_\mu}^\gamma\right)\right|_x\left.\frac{\partial^{|J'-I|}}{\partial x^{J'-I}}\left(b_\beta^\epsilon(y)y_{1_\nu}^\beta\right)\right|_x\\\nonumber
& =\left.\frac{\partial^{|J'|}}{\partial x^{J'}}\left(c_{\lambda\epsilon}^\alpha b_\gamma^\lambda(y)b_\beta^\epsilon(y)y_{1_\mu}^\gamma y_{1_\nu}^\beta\right)\right|_x
\end{align}
for each $1\leq\alpha\leq m$, and $1\leq\mu,\nu\leq n$. By gathering \eqref{eq:proof1} and \eqref{eq:proof2}, and by making use of Lemma \ref{lemma:liebracketbs}, we conclude
\begin{align*}
y_J^\alpha(\mu)-y_J^\alpha(\nu) & ={A}_{\mu,J'+1_\nu}^\alpha-{A}_{\nu,J'+1_\mu}^\alpha-\left.\frac{\partial^{|J'|}}{\partial x^{J'}}\left(\left(\frac{\partial b_\gamma^\alpha}{\partial y^\beta}(y)-\frac{\partial b_\beta^\alpha}{\partial y^\gamma}(y)\right)y_{1_\mu}^\gamma y_{1_\nu}^\beta\right)\right|_x\\
& =\left.\frac{\partial^{|J'|}}{\partial x^{J'}}\left(c_{\lambda\epsilon}^\alpha b_\gamma^\lambda(y)b_\beta^\epsilon(y)y_{1_\mu}^\gamma y_{1_\nu}^\beta\right)\right|_x-\left.\frac{\partial^{|J'|}}{\partial x^{J'}}\left(c_{\lambda\epsilon}^\alpha b_\gamma^\lambda(y)b_\beta^\epsilon(y)y_{1_\mu}^\gamma y_{1_\nu}^\beta\right)\right|_x=0.
\end{align*}

Once we have defined $j_x^k s=(x^\mu,y^\alpha;y_J^\alpha)\in J^k P$, the last step is to check that $\Theta_k(j_x^k s)=j_x^{k-1}\sigma_{A}$, which is straightforward from \eqref{eq:proof0} and Proposition \ref{prop:Thetakcoordinates}.

Lastly, we show that $\Xi_k$ is proper. Let $\left([j_{x_n}^k s_n]_G\right)_{n=1}^\infty$ be a sequence in $\left.\left(J^k P\right)\right/G$ such that $\left(j_{x_n}^{k-1}\sigma_{{A}_n }=\Xi_k\left([j_{x_n}^k s_n]_G\right)\right)_{n=1}^\infty$ converges to some $j_x^{k-1}\sigma_{A}\in J^{k-1}C(P)$. Since $j_{x_n}^{k-2}\tilde F_{{A}_n}=j_{x_n}^{k-2}\hat 0$ for each $n\in\mathbb N$, by continuity we conclude that $j_x^{k-2}\tilde F^A=j_x^{k-2}\hat 0$, that is, $j_x^{k-1}\sigma_{A}\in\ker j^{k-2}\tilde F=\operatorname{im}\Xi_k$. Thus, there exists $j_x^k s\in J^k P$ such that $\Xi_k([j_x^k s]_G)=j_x^{k-1}\sigma_{A}$. In coordinates, we write $j_{x_n}^k s_n=(x_n^\mu,y_n^\alpha;(y_n)_J^\alpha)$, $j_{x_n}^{k-1}\sigma_{{A}_n}=(x_n^\mu,({A}_n)_\mu^\alpha;({A}_n)_{\mu,J}^\alpha)$, $j_x^k s=(x^\mu,y^\alpha;y_J^\alpha)$ and $j_x^{k-1}\sigma_{A}=(x^\mu,{A}_\mu^\alpha;{A}_{\mu,J}^\alpha)$. Without loss of generality, we may assume that $y_n^\alpha=0$ and $y^\alpha=0$, $1\leq\alpha\leq m$. This way, by recalling Proposition \ref{prop:Thetakcoordinates}, we have
\begin{equation*}
\lim_n\left.\frac{\partial^{|J|}}{\partial x^J}\left(b_\beta^\alpha(y_n)(y_n)_{1_\mu}^\beta\right)\right|_{x_n}=\lim_n({A}_n)_{\mu,J}^\alpha={A}_{\mu,J}^\alpha=\left.\frac{\partial^{|J|}}{\partial x^J}\left(b_\beta^\alpha(y)y_{1_\mu}^\beta\right)\right|_x
\end{equation*}
for each $0\leq|J|\leq k-1$, $1\leq\alpha\leq m$ and $1\leq\mu\leq n$. Subsequently, by recurrence as above, it can be checked that $\lim_n(y_n)_J^\alpha=y_J^\alpha$ for each $0\leq|J|\leq k$ and $1\leq\alpha\leq m$ and, thus, $\left(j_{x_n}^k s_n\right)_{n=1}^\infty$ converges to $j_x^k s$.
\end{proof}

By gathering the previous results, we obtain the geometry of the reduced configuration bundle. 

\begin{corollary}\label{corollary:isomorphismquotient}
For each $k\geq 2$, there is an isomorphism of fiber bundles covering the identity $\operatorname{id}_X$,
\begin{equation*}
\left(J^k P\right)/ G\simeq \ker j^{k-2}\tilde F=\left\{j_x^{k-1}\sigma_{A}\in J^{k-1}C(P)\mid j_x^{k-2}\tilde F^A=j_x^{k-2}\hat 0\right\}.
\end{equation*}
\end{corollary}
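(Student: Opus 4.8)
The plan is to assemble the corollary directly from the results already established for the maps $\Theta_k$ and $\Xi_k$. Recall from Proposition \ref{prop:welldefined} that $\Theta_k:J^kP\to J^{k-1}C(P)$ descends to $\Xi_k:(J^kP)/G\to J^{k-1}C(P)$, and from Proposition \ref{prop:Psikproperties} that $\Xi_k$ is an injective bundle morphism over $X$ with $\operatorname{im}\Xi_k\subset\ker j^{k-2}\tilde F$. Theorem \ref{theorem:imageandproper} upgrades this: $\operatorname{im}\Xi_k=\ker j^{k-2}\tilde F$ exactly, and $\Xi_k$ is proper. So as a set map, $\Xi_k$ is a continuous bijection onto $\ker j^{k-2}\tilde F$; the task is to promote it to an isomorphism of fiber bundles over $\operatorname{id}_X$.

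First I would note that the $G$-action $R^{(k)}$ on $J^kP$ is free and proper (stated just before the definition of $C(P)$), so the quotient $J^kP/G$ is a smooth manifold and $\pi_k:J^kP\to J^kP/G$ is a surjective submersion; moreover $(J^kP)/G\to X$ is a genuine fiber bundle. Likewise $\ker j^{k-2}\tilde F$ should first be checked to be a smooth submanifold (indeed a subbundle) of $J^{k-1}C(P)$: this follows because, in the adapted coordinates of the section, the defining equations $F_{\mu\nu,J}^\alpha=0$ for $0\le|J|\le k-2$ express the fiber coordinates ${A}_{\mu,J+1_\nu}^\alpha-{A}_{\mu,J+1_\mu}^\alpha$ (the ``antisymmetric part'' of the top derivatives) in terms of lower-order data via \eqref{eq:curvaturelocal}, so locally $\ker j^{k-2}\tilde F$ is cut out by equations that are, after the triangular change of variables, simply the vanishing of independent coordinates; hence it is an embedded subbundle over $X$. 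Then I would argue that $\Xi_k$ is smooth: it is induced on the quotient by the smooth map $\Theta_k=j^{k-1}\pi_1\circ(\text{inclusion }J^kP\hookrightarrow J^{k-1}(J^1P))$, and since $\pi_k$ is a submersion, $\Xi_k\circ\pi_k=\Theta_k$ smooth forces $\Xi_k$ smooth.

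Next I would establish that $\Xi_k$ is a diffeomorphism onto $\ker j^{k-2}\tilde F$. There are two clean routes. One is to invoke the closed-map/proper argument: an injective immersion that is proper is a closed embedding, and a bijective closed embedding onto a manifold is a diffeomorphism; here properness of $\Xi_k$ is Theorem \ref{theorem:imageandproper}(ii), injectivity is Proposition \ref{prop:Psikproperties}(i), and surjectivity onto $\ker j^{k-2}\tilde F$ is Theorem \ref{theorem:imageandproper}(i) — so it remains only to see $\Xi_k$ is an immersion, which can be read off the coordinate formula in Proposition \ref{prop:Thetakcoordinates}: at $y^\alpha=0$ one has $b_\beta^\alpha=\delta_\beta^\alpha$, so ${A}_\mu^\alpha=y_{1_\mu}^\alpha$ and ${A}_{\mu,J}^\alpha=y_{J+1_\mu}^\alpha+(\text{lower order})$, a triangular (hence invertible) relation between the fiber coordinates of $(J^kP)/G$ and those of $\ker j^{k-2}\tilde F$. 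Alternatively, and perhaps more transparently, I would exhibit a smooth local inverse directly: the recurrence in the proof of Theorem \ref{theorem:imageandproper}(i) that reconstructs $j_x^ks$ from $j_x^{k-1}\sigma_{A}\in\ker j^{k-2}\tilde F$ is built from arithmetic operations and the smooth functions $b_\beta^\alpha$ and their derivatives, hence defines a smooth map $\ker j^{k-2}\tilde F\to J^kP/G$, and it is a two-sided inverse of $\Xi_k$ by the computations already carried out there. Finally, since both $\Theta_k$ and the inclusion $J^{k-1}C(P)\to X$ cover $\operatorname{id}_X$, so does $\Xi_k$, and a fiberwise diffeomorphism of fiber bundles covering the identity that is a diffeomorphism of total spaces is an isomorphism of fiber bundles; this gives the claimed $(J^kP)/G\simeq\ker j^{k-2}\tilde F$.

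The main obstacle is the smoothness/regularity bookkeeping rather than any new idea: one must be careful that the coordinate formulas of Proposition \ref{prop:Thetakcoordinates} are only valid near $y^\alpha=0$, so that the local triangular structure and the explicit inverse are genuinely defined on a neighborhood, and then check that these local isomorphisms are compatible with the $G$-action so that they glue to a global bundle isomorphism (equivalently, that $\Xi_k$ is globally a diffeomorphism, which the proper-embedding argument delivers without gluing). Everything else is a direct citation of Propositions \ref{prop:welldefined}, \ref{prop:Psikproperties}, \ref{prop:Thetakcoordinates} and Theorem \ref{theorem:imageandproper}.
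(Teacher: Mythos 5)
Your proposal is correct and ends at the same place as the paper, but the decisive step --- showing that $\Xi_k$ is an immersion --- is handled by a genuinely different argument. The paper never computes a differential: it conjugates $\Theta_k$ by the jet prolongations of a principal bundle automorphism $\tau$ carrying one section to another, deduces that $\Theta_k$ (hence $\Xi_k$, since $\pi_k$ is a submersion) has constant rank, and then combines the Global Rank Theorem with injectivity to conclude that $\Xi_k$ is an immersion. You instead read the rank off the coordinate formula of Proposition \ref{prop:Thetakcoordinates}: at $y^\alpha=0$ the relation ${A}_{\mu,J}^\alpha=y_{J+1_\mu}^\alpha+(\text{lower order})$ is triangular, so $\ker d\Theta_k$ meets each fibre in a subspace of dimension at most $m$; since the free $G$-action already places the $m$-dimensional orbit direction inside that kernel, $\ker d\Theta_k$ is exactly the tangent space to the orbit and $d\Xi_k$ is injective. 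To make this airtight you should state explicitly that $\ker d\Theta_k$ is automatically vertical (because $\Theta_k$ covers $\operatorname{id}_X$) and that it suffices to check the rank at representatives with $y^\alpha=0$, which every orbit admits. Both routes then finish identically: proper injective immersion $\Rightarrow$ embedding $\Rightarrow$ the image $\ker j^{k-2}\tilde F$ is an embedded submanifold and $\Xi_k$ is a diffeomorphism onto it. The paper's argument is cleaner and manifestly coordinate-free; yours is more computational but buys an explicit smooth local inverse via the recurrence from the proof of Theorem \ref{theorem:imageandproper}. One caveat: your preliminary claim that $\ker j^{k-2}\tilde F$ is an embedded subbundle because it is ``cut out by independent equations'' is not right as stated --- the equations $F_{\mu\nu,J}^\alpha=0$ with $|J|\geq 1$ satisfy Bianchi-type relations and are therefore not independent, so a genuine argument (identifying a maximal independent subset, or a rank computation) would be needed there. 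Fortunately that claim is redundant: the embedding argument already endows the image with its submanifold structure, which is exactly how the paper proceeds.
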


\begin{proof}
Let us show that $\Xi_k$ is an immersion, i.e., that $d\Xi_k$ is everywhere injective. Since the property is local we can work in a trivialization of $\pi_{P,X}$, that is, we assume that $P=X\times G$. Given two sections $s=(\operatorname{id}_X,f),s'=(\operatorname{id}_X,f')\in\Gamma(\pi_{P,X})$, we define $\tau:P\to P$ as $\tau(x,g)=\left(x,f'(x)f(x)^{-1}g\right)$ for each $x\in X$, which is a principal bundle automorphism such that $\tau\circ s=s'$. Besides, the jet extension $j^1\tau:J^1 P\to J^1 P$ is $G$-equivariant, i.e., $j^1\tau(j^1_x s)\cdot g=j^1\tau\left((j_x^1 s)\cdot g\right)$ for each $g\in G$ and each $j_x^1 s\in J^1 P$. Hence, it induces an automorphism of the bundle of connections, $\hat\tau=[j^1\tau]_G: C(P)\to C(P)$. By taking the jet lift of these maps, we arrive at the following commutative diagram
\begin{equation*}
\begin{tikzpicture}
\matrix (m) [matrix of math nodes,row sep=3em,column sep=5em,minimum width=2em]
{	J^k P & J^{k-1}C(P)\\
	J^k P & J^{k-1}C(P)\\};
\path[-stealth]
(m-1-1) edge [] node [above] {$\Theta_k$} (m-1-2)
(m-1-1) edge [] node [left] {$j^k\tau$} (m-2-1)
(m-1-2) edge [] node [right] {$j^{k-1}\hat\tau$} (m-2-2)
(m-2-1) edge [] node [above] {$~\Theta_k$} (m-2-2);
\end{tikzpicture} 
\end{equation*}
Therefore, $\Theta_k$ has constant rank, since $\tau$ and $\hat\tau$ are isomorphisms and this construction can be done for every pair of sections $s,s'\in\Gamma(\pi_{P,X})$. By using this and the fact that $\pi_k$ is a submersion, and by recalling \eqref{eq:diagram}, we conclude that $\Xi_k$ has constant rank too. Since $\Xi_k$ is injective (recall Proposition \ref{prop:Psikproperties}\emph{(i)}), the Global Rank Theorem (see, for example, \cite[Theorem 4.14]{Le2012}) ensures that $\Xi_k$ is an immersion.

In short, $\Xi_k$ is an injective immersion and proper (by \emph{(ii)} of Theorem \ref{theorem:imageandproper}), whence it is an embedding (cf. \cite[Proposition 4.22]{Le2012}). Hence, $\operatorname{im}\Xi_k=\ker j^{k-2}\tilde F$ is a submanifold of $J^{k-1}C(P)$ and, again by the Global Rank Theorem, $\Xi_k:\left.\left(J^k P\right)\right/G\to\ker j^{k-2}\tilde F$ is a diffeomorphism.
\end{proof}

\noindent {\bf Notation.} Henceforth, we will denote
\[
C^{k-1}_0(P)=\operatorname{im}\Xi_k=\ker j^{k-2}\tilde F.
\]

\section{Higher-order Euler--Poincar\'e reduction}\label{sec:redeqs}

At this point, we are ready to develop the Euler--Poincar\'e reduction for higher-order field theories. Given a principal $G$-bundle, $\pi_{P,X}:P\to X$, over a boundaryless manifold $X$ and $k\geq 2$, we consider a $k$-th order Lagrangian density, $\mathfrak L=Lv:J^k P\to\bigwedge^n T^*X$, that is $G$-invariant, i.e.,
\begin{equation*}
L\left(R_g^{(k)}\left(j_x^k s\right)\right)=L\left(j_x^k s\right),\qquad j_x^k s\in J^k P,~g\in G.
\end{equation*}
As a result, we have a \emph{dropped} or \emph{reduced Lagrangian}, 
\begin{equation*}
\mathfrak l=lv:\left.\left(J^k P\right)\right/G\to\textstyle\bigwedge^n T^*X,\qquad\left[j_x^k s\right]_G\mapsto \mathfrak l\left(\left[j_x^k s\right]_G\right)=\mathfrak L\left(j_x^k s\right).
\end{equation*}
By means of Corollary \ref{corollary:isomorphismquotient}, we may regard the reduced Lagrangian as defined on $C^{k-1}_0(P)=\ker j^{k-2}\tilde F\subset J^{k-1}C(P)$. For an open subset $\mathcal U\subset X$ with compact closure and $s\in\Gamma\left(\mathcal U,\pi_{P,X}\right)$, the corresponding \emph{reduced section} is defined as
\begin{equation*}
\sigma_{A}=\left[j^1 s\right]_G\in\Gamma\left(\mathcal U,\pi_{C(P),X}\right).
\end{equation*}
A variation $\delta s$ of the original section induces a variation $\delta\sigma_{A}$ of the reduced section. By construction, the original and the reduced variations satisfy $\mathfrak L\left(j^k s_t\right)=\mathfrak l\left(j^{k-1}\left(\overline\sigma_{A}\right)_t\right)$ for all $t\in(-\epsilon,\epsilon)$ and, thus,
\begin{equation}\label{eq:equivalenciavariaciones}
\left.\frac{d}{dt}\right|_{t=0}\int_{\mathcal U}\mathfrak L\left(j^k s_t\right)=\left.\frac{d}{dt}\right|_{t=0}\int_{\mathcal U}\mathfrak l\left(j^{k-1}\left(\sigma _A\right)_t\right).
\end{equation}

If we choose a fixed principal connection ${A}_0\in\Omega^1(P,\mathfrak g)$ on $\pi_{P,X}$, we can identify the bundle of connections with the modelling vector bundle,
\begin{equation}\label{eq:isomorphismsCP}
C(P)\ni\sigma_{A}(x)\overset{1:1}{\longleftrightarrow}\overline\sigma_{A}(x)=\sigma_{A}(x)-\sigma_{{A}_0}(x)\in T^*X\otimes \ad(P).
\end{equation}
By taking the $(k-1)$-jet lift of this map, we can also identify the reduced configuration space with some subbundle of $J^{k-1}\left(T^*X\otimes \ad(P)\right)$, that is,
\begin{equation*}
J^{k-1}C(P)\supset C^{k-1}_0(P)\overset{\sim}{\longleftrightarrow}\overline{C^{k-1}_0(P)}\subset J^{k-1}\left(T^*X\otimes \ad(P)\right),
\end{equation*}
so that the reduced Lagrangian could be regarded as
\begin{equation*}
\mathfrak l:\overline{C^{k-1}_0(P)}\to\textstyle\bigwedge^n T^*X.
\end{equation*} 
The whole situation is summarized in the following commutative diagram:
\begin{equation*}
\begin{tikzpicture}
\matrix (m) [matrix of math nodes,row sep=3em,column sep=3em,minimum width=2em]
{	J^k P & & &\\
	 & \left.\left(J^k P\right)\right/G & C^{k-1}_0(P) & \overline{C^{k-1}_0(P)}\\
	 J^1 P & & &\\
	 P & J^1 P/G & C(P) & T^*X\otimes \ad(P)\\
	 \mathcal U\left(\subset X\right) & & &\\};
\path[-stealth]
(m-1-1) edge [] node [] {} (m-3-1)
(m-3-1) edge [] node [] {} (m-4-1)
(m-4-1) edge [] node [] {} (m-5-1)
(m-5-1) edge [bend left=35,dashed] node [left] {$s$} (m-4-1)
(m-5-1) edge [bend left=50,dashed] node [left] {$j^k s$} (m-1-1)
(m-1-1) edge [] node [above] {$\pi_k$} (m-2-2)
(m-3-1) edge [] node [above] {$\pi_1$} (m-4-2)
(m-4-2) edge [] node [] {} (m-5-1)
(m-2-2) edge [] node [above] {$\Xi_k$} (m-2-3)
(m-2-3) edge [] node [above] {$\sim$} (m-2-4)
(m-2-4) edge [] node [] {} (m-4-4)
(m-5-1) edge [bend right,dashed] node [above] {$\sigma_{A}$} (m-4-3)
(m-5-1) edge [bend right,dashed] node [above] {$\overline\sigma_{A}$} (m-4-4)
(m-2-2) edge [] node [] {} (m-2-3)
(m-2-2) edge [] node [] {} (m-4-2)
(m-2-3) edge [] node [] {} (m-4-3)
(m-4-2) edge [] node [above] {$\sim$} (m-4-3)
(m-4-3) edge [] node [above] {$\sim$} (m-4-4);
\end{tikzpicture} 
\end{equation*}

\begin{definition}
Let $\nabla^{{A}_0}:\Gamma\left(\pi_{\ad(P),X}\right)\to\Gamma\left(\pi_{T^*X\otimes \ad(P),X}\right)$ be the linear connection on the adjoint bundle, $\pi_{\ad(P),X}$, induced by ${A}_0$. The \emph{divergence of $\nabla^{{A}_0}$} is minus the adjoint of $\nabla^{{A}_0}$, i.e., the map $\textrm{\normalfont div}^{{A}_0}:\Gamma\left(\pi_{TX\otimes \ad(P)^*,X}\right)\to\Gamma\left(\pi_{\ad(P)^*,X}\right)$ implicitly defined by
\begin{equation*}
\int_X\left\langle\eta,\nabla^{{A}_0}\xi\right\rangle v=-\int_X\left\langle\textrm{\normalfont div}^{{A}_0}\eta,\xi\right\rangle v,\qquad\xi\in\Gamma\left(\pi_{\ad(P),X}\right),~\eta\in\Gamma\left(\pi_{TX\otimes \ad(P)^*,X}\right).
\end{equation*}
\end{definition}

Given $\overline\sigma_{A}\in\Gamma\left(\pi_{T^*X\otimes \ad(P),X}\right)$, we define the map $\ad_{\overline\sigma_{A}}^*:\Gamma\left(\pi_{TX\otimes \ad(P)^*,X}\right)\to\Gamma\left(\pi_{\ad(P)^*,X}\right)$ as the coadjoint representation of $\ad(P)^*$ and the dual pairing of $TX$ and $T^*X$. In the following, since $\pi_{T^*X\otimes \ad(P),X}$ is a vector bundle, we identify its vertical bundle with itself, $V\left(T^*X\otimes \ad(P)\right)\simeq T^*X\otimes \ad(P)$, and analogous for its dual.

\begin{theorem}[Higher-order Euler--Poincaré field equations]\label{theorem:reducedequations}
Let $k\geq 2$, $\pi_{P,X}:P\to X$ be a principal $G$-bundle and $\mathfrak L:J^k P\to\bigwedge^n T^*X$ be a $k$-th order, $G$-invariant Lagrangian density. Let $\mathfrak l:C^{k-1}_0(P)\to\bigwedge^n T^*X$ be the reduced Lagrangian density, where $C^{k-1}_0(P)\simeq\left.\left(J^k P\right)\right/G$ is the reduced space, and consider an extension $\hat{\mathfrak l}:J^{k-1}\left(C(P)\right)\to\bigwedge^n T^*X$ of the reduced Lagrangian $\mathfrak l$ to the whole jet.
Let ${A}_0\in\Omega^1(P,\mathfrak g)$ be a principal connection on $\pi_{P,X}$, which enables us to identify $C(P)\simeq T^*X\otimes \ad(P)$, and let $\nabla^{{A}_0}$ be the linear connection on $\pi_{\ad(P),X}$ induced by ${A}_0$. 

Given a (local) section $s\in\Gamma(\mathcal U,\pi_{P,X})$, where $\mathcal U\subset X$ is an open subset with compact closure, and the corresponding reduced section $\sigma_{A}\in\Gamma\left(\mathcal U,\pi_{C(P),X}\right)$, the following statements are equivalent:
\begin{enumerate}[(i)]
    \item The variational principle $\delta\displaystyle\int_{\mathcal U}\mathfrak L\left(j^k s\right)=0$ holds for arbitrary variations of $s$ such that $j_x^{k-1}\,\delta s=0$ for each $x\in\partial\mathcal U$.
    \item The section $s$ satisfies the Euler--Lagrange field equations, $\left(j^{2k}s\right)^*\mathcal{EL}(\mathfrak L)=0$.
    \item The variational principle $\delta\displaystyle\int_{\mathcal U}\mathfrak l\left(j^{k-1}\sigma_{A}\right)=0$ holds for variations of the form 
    \begin{equation*}
    \delta\sigma_{A}=\nabla^{{A}_0}\xi-\left[\overline\sigma_{A},\xi\right]\in\Gamma\left(\mathcal U,\pi_{T^*X\otimes \ad(P),X}\right),
    \end{equation*}
    where $\xi\in\Gamma(\mathcal U,\pi_{\ad(P),X})$ is an arbitrary section such that $j_x^{k-1}r\xi=0$ for each $x\in\partial\mathcal U$.
    \item The reduced section $\overline\sigma_{A}$ satisfies the \emph{$k$-th order Euler--Poincaré field equations}:
    \begin{equation}
    \label{HOEP}
    \left(\textrm{\normalfont div}^{{A}_0}-\ad_{\overline\sigma_{A}}^*\right)\left(\left(j^{2k-2}\sigma_{A}\right)^*\mathcal{EL}\left(\hat{\mathfrak l}\right)\right)=0,
    \end{equation}
    where $\mathcal{EL}\left(\hat{\mathfrak l}\right)\in\Omega^n\left(J^{2k-2}(C(P)),TX\otimes \ad(P)^*\right)$ is the Euler--Lagrange form of $\hat{\mathfrak l}$.
\end{enumerate}
\end{theorem}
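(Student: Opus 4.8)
The strategy is to run the cycle of equivalences $(i)\Leftrightarrow(ii)$, $(i)\Leftrightarrow(iii)$ and $(iii)\Leftrightarrow(iv)$. The first of these is immediate: it is Theorem \ref{theorem:ELequations} in the compactly supported form of Remark \ref{remark:noncompact}, applied to the density $\mathfrak L$ on $J^kP$ over the open set $\mathcal U$, the admissible variations being exactly those with $j^{k-1}_x\delta s=0$ on $\partial\mathcal U$.

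For $(i)\Leftrightarrow(iii)$ the essential ingredient is identity \eqref{eq:equivalenciavariaciones}, which says that the unreduced and the reduced actions agree along corresponding one-parameter families; hence $\delta\int_{\mathcal U}\mathfrak L(j^ks)=0$ for a vertical variation $\delta s$ if and only if $\delta\int_{\mathcal U}\mathfrak l(j^{k-1}\sigma_{A})=0$ for the induced $\delta\sigma_{A}$. What remains is to check that, as $\delta s$ runs over the vertical variations with $j^{k-1}_x\delta s=0$ on $\partial\mathcal U$, the induced $\delta\sigma_{A}$ runs over exactly the family $\{\nabla^{{A}_0}\xi-[\overline\sigma_{A},\xi]\mid\xi\in\Gamma(\mathcal U,\pi_{\ad(P),X}),\ j^{k-1}_x\xi=0\text{ on }\partial\mathcal U\}$. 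This is the higher-order counterpart of the variation formula of first-order Euler--Poincaré field theory \cite{CaGaRa2001}: via the identification \eqref{eq:xitopartiallocal}, a vertical variation $\delta s$ corresponds bijectively to a section $\xi$ of $\ad(P)$, and since the first variation depends only on $\delta s=V\circ s$ we may take $V$ to be the $G$-invariant vertical vector field determined by $\xi$, whose flow consists of gauge transformations and therefore descends to $C(P)$; computing $\left.\tfrac{d}{dt}\right|_{0}[j^1 s_t]_G$ then identifies $\delta\sigma_{A}$ with the covariant differential of $\xi$ with respect to ${A}$, which in terms of the reference connection ${A}_0$ reads $\nabla^{{A}_0}\xi-[\overline\sigma_{A},\xi]$, and the converse passage is realized by the variation generated by that flow. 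The two jet-boundary conditions correspond because in a local trivialization $\delta s$ and $\xi$ are related by the pointwise invertible matrix $(a^\alpha_\beta)$ of \eqref{eq:xitopartiallocal}, so their $(k-1)$-jets vanish at a point simultaneously.

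For $(iii)\Leftrightarrow(iv)$ I would compute the reduced first variation explicitly. Since $C(P)\simeq T^*X\otimes \ad(P)$ is a vector bundle and the varied reduced sections $(\sigma_{A})_t=[j^1 s_t]_G$ have their $(k-1)$-jets inside $C^{k-1}_0(P)$ (Corollary \ref{corollary:isomorphismquotient}), on which $\mathfrak l=\hat{\mathfrak l}$, Proposition \ref{prop:actionvariation} applied to the extension $\hat{\mathfrak l}$ yields
\begin{equation*}
\delta\int_{\mathcal U}\mathfrak l\left(j^{k-1}\sigma_{A}\right)=\int_{\mathcal U}\left\langle\left(j^{2k-2}\sigma_{A}\right)^*\mathcal{EL}\left(\hat{\mathfrak l}\right),\ \delta\sigma_{A}\right\rangle.
\end{equation*}
Substituting $\delta\sigma_{A}=\nabla^{{A}_0}\xi-[\overline\sigma_{A},\xi]$ and abbreviating $\mathcal E=\left(j^{2k-2}\sigma_{A}\right)^*\mathcal{EL}(\hat{\mathfrak l})\in\Gamma(\mathcal U,\pi_{TX\otimes \ad(P)^*,X})$, the term with $\nabla^{{A}_0}\xi$ is integrated by parts — without boundary contribution, since $j^{k-1}_x\xi=0$ on $\partial\mathcal U$ makes the relevant jets of $\delta\sigma_{A}$ vanish there — and becomes $-\int_{\mathcal U}\langle\textrm{div}^{{A}_0}\mathcal E,\xi\rangle v$ by the definition of $\textrm{div}^{{A}_0}$, while the bracket term becomes $\int_{\mathcal U}\langle\ad^*_{\overline\sigma_{A}}\mathcal E,\xi\rangle v$ by the definition of the coadjoint map. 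Thus
\begin{equation*}
\delta\int_{\mathcal U}\mathfrak l\left(j^{k-1}\sigma_{A}\right)=-\int_{\mathcal U}\left\langle\left(\textrm{div}^{{A}_0}-\ad^*_{\overline\sigma_{A}}\right)\mathcal E,\ \xi\right\rangle v,
\end{equation*}
and since $\xi$ ranges over all sections of $\ad(P)$ over $\mathcal U$ that vanish to order $k-1$ on $\partial\mathcal U$, the fundamental lemma of the calculus of variations shows that the left-hand side vanishes for every admissible $\xi$ if and only if $\left(\textrm{div}^{{A}_0}-\ad^*_{\overline\sigma_{A}}\right)\mathcal E=0$, i.e. if and only if \eqref{HOEP} holds. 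As a byproduct, the chain of equivalences shows that \eqref{HOEP} does not depend on the chosen extension $\hat{\mathfrak l}$.

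The main obstacle is the verification inside $(i)\Leftrightarrow(iii)$: matching the induced reduced variations with the constrained family $\nabla^{{A}_0}\xi-[\overline\sigma_{A},\xi]$, which requires careful bookkeeping of the bracket and coadjoint sign conventions and of the compatibility between the jet-boundary constraints on $\delta s$ and on $\xi$. Everything else is a direct appeal to earlier results — Theorem \ref{theorem:ELequations}, Corollary \ref{corollary:isomorphismquotient} and Proposition \ref{prop:actionvariation} — combined with the usual integration-by-parts and fundamental-lemma arguments.
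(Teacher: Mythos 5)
Your proposal is correct and follows essentially the same route as the paper: $(i)\Leftrightarrow(ii)$ via Theorem \ref{theorem:ELequations}, $(i)\Leftrightarrow(iii)$ via the induced reduced variations of \cite{CaGaRa2001} together with \eqref{eq:equivalenciavariaciones}, and $(iii)\Leftrightarrow(iv)$ by applying Proposition \ref{prop:actionvariation} to the extension $\hat{\mathfrak l}$ and integrating by parts against the definitions of $\textrm{\normalfont div}^{{A}_0}$ and $\ad^*_{\overline\sigma_{A}}$. The only difference is that you spell out the correspondence between $\delta s$ and $\xi$ that the paper simply delegates to the cited reference, which is a harmless (and welcome) elaboration.
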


\begin{proof}
The equivalence between $(i)$ and $(ii)$ is established in Theorem \ref{theorem:ELequations}. In addition, the induced variations of the reduced section were computed in \cite[Theorem 1]{CaGaRa2001}. Hence, the equivalence between $(i)$ and $(iii)$ is straightforward from this fact and equation \eqref{eq:equivalenciavariaciones}. To conclude, let us show the equivalence between $(iii)$ and $(iv)$. Firstly, note that given a variation of the original section, together with the induced variation of the reduced section, we have $j_x^{k-1}\left(\sigma_{A}\right)_t\in C^{k-1}_0(P)$ for all $t\in(-\epsilon,\epsilon)$ and $x\in\mathcal U$. Hence, $\delta\left(j^{k-1}\sigma_{A}\right)(x)=\left(\delta\sigma_{A}\right)^{(k-1)}(x)\in T_{j_x^{k-1}\sigma_{A}}\left(C^{k-1}_0(P)\right)$. By applying the explicit expression of the reduced variations and Proposition \ref{prop:actionvariation}, we obtain
\begin{align*}
\delta\int_{\mathcal U}\hat{\mathfrak l}\left(j^{k-1}\sigma_{A}\right) & =\int_{\mathcal U}\left\langle\left(j^{2k-2}\sigma_{A}\right)^*\mathcal{EL}\left(\hat{\mathfrak l}\right),\delta\sigma_{A}\right\rangle\\
& =\int_{\mathcal U}\left\langle\left(j^{2k-2}\sigma_{A}\right)^*\mathcal{EL}\left(\hat{\mathfrak l}\right),\nabla^{{A}_0}\xi-\left[\overline\sigma_{A},\xi\right]\right\rangle\\
& =-\int_{\mathcal U}\left\langle\textrm{\normalfont div}^{{A}_0}\left(\left(j^{2k-2}\sigma_{A}\right)^*\mathcal{EL}\left(\hat{\mathfrak l}\right)\right)-\ad_{\overline\sigma_{A}}^*\left(\left(j^{2k-2}\sigma_{A}\right)^*\mathcal{EL}\left(\hat{\mathfrak l}\right)\right),\xi\right\rangle=0.
\end{align*}
Since this holds for every $\xi\in\Gamma\left(\mathcal U,\pi_{\ad(P),X}\right)$ vanishing at the boundary, we conclude.
\end{proof}

\begin{remark}
Since (the $(k-1)$-th jet lift of) the reduced section lies in $C_0^{k-1}(P)$ and the infinitesimal reduced variations are tangent to this space, the solutions of the reduced equations do not depend on the choice of the extended Lagrangian $\hat{\mathfrak l}:J^{k-1}C(P)\to\bigwedge^n T^*X$. That is, if we have two such extensions $\hat{\mathfrak l}_1,\hat{\mathfrak l}_2$ with $\hat{\mathfrak l}_1|_{C^{k-1}_0(P)}=\hat{\mathfrak l}_2|_{C^{k-1}_0(P)}$, then a reduced section is critical with respect for $\hat{\mathfrak l}_1$ if and only if it is critical for $\hat{\mathfrak l}_2$. In other words, $C_0^{k-1}(P)\subset J^{k-1}C(P)$ may be regarded as a kind of holonomic constraint.
\end{remark}

\begin{remark}
    If we choose the connection $A_0 = A$ itself, that is, the connection defined by the reduced section $\sigma _A =[j^1s]_G$, the reduced equation \eqref{HOEP} has the simpler form
    \[
    \textrm{\normalfont div}^{A}\left(\left(j^{2k-2}\sigma_{A}\right)^*\mathcal{EL}\left(\hat{\mathfrak l}\right)\right)=0.
    \]
\end{remark}
Let us find the local expression of the reduced equations. As in Section \ref{sec:reducedspace}, let $\{B_\alpha\mid 1\leq\alpha\leq m\}$ be a basis of $\mathfrak g$, $(x^\mu,y^\alpha)$ be normal bundle coordinates for $\pi_{Y,X}$ and $(x^\mu,{A}_\mu^\alpha)$ be bundle coordinates for $\pi_{C(P),X}$. Similarly, consider the adapted coordinates $(x^\mu,y^\alpha;y_J^\alpha)$ and $(x^\mu,{A}_\mu^\alpha;{A}_{\mu,J}^\alpha)$ for $J^k Y$ and $J^{k-1}C(P)$, respectively. For the sake of simplicity, we fix the flat connection on $\pi_{Y,X}$ under these coordinates, i.e., ${A}_0=dy^\alpha\otimes B_\alpha$. This way, $(x^\mu,{A}_\mu^\alpha)$ may also be regarded as bundle coordinates on $\pi_{T^*X\otimes \ad(P),X}$, and we denote by $(x^\mu,v_\alpha^\mu)$ the bundle coordinates on the dual bundle, $\pi_{TX\otimes \ad(P)^*,X}$.

\begin{lemma}\label{lemma:divergencecoadjoint}
With the above bundle coordinates, let $\eta=\eta_\alpha^\mu\,\partial_\mu\otimes B^\alpha\in\Gamma\left(\pi_{TX\otimes \ad(P)^*,X}\right)$ and $\overline\sigma_{A}=\sigma_\mu^\alpha\,dx^\mu\otimes B_\alpha\in\Gamma\left(\pi_{T^*X\otimes \ad(P),X}\right)$. Then we have 
\begin{equation*}
\textrm{\normalfont div}^{{A}_0}(\eta)=\partial_\mu\eta_\alpha^\mu\,B^\alpha,\qquad \ad_{\overline\sigma_{A}}^*(\eta)=-c_{\beta\gamma}^\alpha\,\eta_\alpha^\mu\,\sigma_\mu^\beta\,B^\gamma.
\end{equation*}
\end{lemma}

\begin{proof}
Let $\xi=\xi^\alpha B_\alpha\in\Gamma\left(\pi_{\ad(P),X}\right)$ and note that $\nabla^{{A}_0}\xi=\partial_\mu\xi^\alpha dx^\mu\otimes B_\alpha$. By definition of divergence and the integration by parts formula \cite[Lemma 4.5]{CaLeMa2010}, we finish
\begin{align*}
\int_X\left\langle\textrm{\normalfont div}^{{A}_0}(\eta),\xi\right\rangle v & =-\int_X\left\langle\eta,\nabla^{{A}_0}\xi\right\rangle v\\
& =-\int_X\left\langle\eta_\alpha^\mu\partial_\mu\otimes B^\alpha,\partial_\mu\xi^\alpha dx^\mu\otimes B_\alpha\right\rangle v\\
& =-\int_X\left(\eta_\alpha^\mu\partial_\mu\xi^\alpha\right)v=\int_X\left(\partial_\mu\eta_\alpha^\mu\xi^\alpha\right)v.
\end{align*}
For the second part, we have
\begin{align*}
\left\langle \ad_{\overline\sigma_{A}}^*(\eta),\xi\right\rangle & =-\left\langle \eta,\ad_{\overline\sigma_{A}}(\xi)\right\rangle=-\left\langle \eta_\alpha^\mu\,\partial_\mu\otimes B^\alpha,\left[\sigma_\mu^\alpha\,dx^\mu\otimes B_\alpha,\xi^\alpha B_\alpha\right]\right\rangle\\
& =-\left\langle \eta_\alpha^\mu\,\partial_\mu\otimes B^\alpha,c_{\beta\gamma}^\alpha\,\sigma_\mu^\beta\,\xi^\gamma\,dx^\mu\otimes B_\alpha\right\rangle=-c_{\beta\gamma}^\alpha\,\eta_\alpha^\mu\,\sigma_\mu^\beta\,\xi^\gamma.
\end{align*}
\end{proof}

\begin{corollary}[Local equations]\label{corollary:localequations}
With the above coordinates, we write
\begin{equation*}
\overline\sigma_{A}=\sigma_\mu^\alpha dx^\mu\otimes B_\alpha\in\Gamma\left(\mathcal U,\pi_{T^*X\otimes \ad(P),X}\right)    
\end{equation*}
for the reduced section. Then the local expression of the $k$-th order Euler--Poincaré field equations is given by
\begin{equation*}
\sum_{|J|=0}^{k-1}(-1)^{|J|}\left(\frac{\partial^{|J+1_\mu|}}{\partial x^{J+1_\mu}}\left(\frac{\partial\hat l}{\partial{A}_{\mu,J}^\alpha}\left(j^{k-1}\overline\sigma_{A}\right)\right)-c_{\beta\alpha}^\gamma\sigma_\mu^\beta\frac{\partial^{|J|}}{\partial x^J}\left(\frac{\partial\hat l}{\partial{A}_{\mu,J}^\gamma}\left(j^{k-1}\overline\sigma_{A}\right)\right)\right)=0,\qquad1\leq\alpha\leq m.
\end{equation*}
\end{corollary}

\begin{proof}
From \eqref{eq:ELformlocal} and \eqref{eq:totalderivativesection}, it is clear that
\begin{equation*}
\left(j^{2k-2}\overline\sigma_{A}\right)^*\mathcal{EL}\left(\hat{\mathfrak l}\right)=\sum_{|J|=0}^{k-1}(-1)^{|J|}\frac{\partial^{|J|}}{\partial x^J}\left(\frac{\partial\hat l}{\partial{A}_{\mu,J}^\alpha}\left(j^{k-1}\overline\sigma_{A}\right)\right)v\otimes d{A}_\mu^\alpha.
\end{equation*}
Recall that we are identifying $V^*\left(T^*X\otimes \ad(P)\right)\simeq TX\otimes \ad(P)^*$, i.e., $d{A}_\mu^\alpha\simeq\partial_\mu\otimes B^\alpha$. The result is now an straightforward consequence of $(iv)$ of Theorem \ref{theorem:reducedequations} and Lemma \ref{lemma:divergencecoadjoint}.
\end{proof}

At last, note that a solution of the reduced equations lying in the reduced space corresponds to a flat principal connection and, thus, the reconstruction condition is automatically satisfied.

\begin{theorem}[Reconstruction]\label{theorem:reconstruction}
In the conditions of Theorem \ref{theorem:reducedequations}, let $\sigma_{A}\in\Gamma\left(\mathcal U,\pi_{C(P),X}\right)$ be a solution of the $k$-th order Euler--Poincaré field equations \eqref{HOEP} such that $j^{k-1}\sigma_{A}\in\Gamma\left(\mathcal U,\pi_{C^{k-1}_0(P),X}\right)$ and ${A}\in\Omega^1(P|_\mathcal{U},\mathfrak g)$ has trivial holonomy on a domain. Then there exists a section $s\in\Gamma\left(\mathcal U,\pi_{P,X}\right)$ that is critial for the original variational problem defined by $\mathfrak L$ and such that $\sigma_{A}=\left[j^1 s\right]_G$. Furthermore, any critical section on $\mathcal{U}$ of $\mathfrak L$ is of the form $s\cdot g$ for certain $g\in G$.
\end{theorem}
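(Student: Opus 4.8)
The plan is to convert the hypotheses into an $A$-horizontal section $s$ of $\pi_{P,X}$ over $\mathcal{U}$ and then let the equivalences of Theorem \ref{theorem:reducedequations} supply criticality; the only genuinely geometric input is the construction of $s$. First I would extract flatness. Since by definition $C_0^{k-1}(P)=\ker j^{k-2}\tilde F$, the assumption $j^{k-1}\sigma_A\in\Gamma(\mathcal U,\pi_{C_0^{k-1}(P),X})$ says exactly that $j_x^{k-2}\tilde F^A=j_x^{k-2}\hat 0$ for every $x\in\mathcal U$; reading off the zeroth-order component already forces $\tilde F^A\equiv 0$ on $\mathcal U$, so $A$ is a flat principal connection on $P|_{\mathcal U}$ and its horizontal distribution is integrable.

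Next I would build $s$. Working on a domain on which $A$ has trivial holonomy --- which exists by hypothesis and which, since $A$ is flat, may always be taken to contain any prescribed simply connected neighbourhood, so that the reconstruction is at least local --- I fix $x_0$ and $y_0\in P_{x_0}$ and let $s(x)$ be the $A$-parallel transport of $y_0$ to $x$. Triviality of the holonomy makes this independent of the chosen path, smoothness of $A$ makes $s\in\Gamma(\mathcal U,\pi_{P,X})$ smooth, and $\operatorname{im}s$ is the leaf of the horizontal foliation through $y_0$, so that $ds_x(T_xX)=Hor_{s(x)}^A(T_xX)$ for all $x$. Comparing with \eqref{eq:horizontalliftsecion} taken at $g=e$, which describes the horizontal lift of the connection determined by $[j^1 s]_G$, the connections $A$ and $[j^1 s]_G$ have the same horizontal subspace at every point of $\operatorname{im}s$; by $G$-equivariance of horizontal lifts and the fact that $\operatorname{im}s$ meets every fibre of $P|_{\mathcal U}$, they coincide, i.e.\ $\sigma_A=[j^1 s]_G$. (Equivalently, $s$ is a flat section of $A$, which is the converse of Lemma \ref{lemma:holonomicflat}.)

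Criticality is then immediate: as $[j^1 s]_G=\sigma_A$ and, by hypothesis, $\sigma_A$ solves the $k$-th order Euler--Poincar\'e field equations \eqref{HOEP}, and as $j^{k-1}\sigma_A$ does lie in $C_0^{k-1}(P)$ so that the reduced variational argument applies to this $s$, the equivalence $(ii)\Leftrightarrow(iv)$ of Theorem \ref{theorem:reducedequations} yields $(j^{2k}s)^*\mathcal{EL}(\mathfrak L)=0$; this is precisely the assertion that the reconstruction condition is automatically satisfied. For the last clause, let $s'\in\Gamma(\mathcal U,\pi_{P,X})$ be any critical section reconstructing the same reduced solution, i.e.\ with $[j^1 s']_G=\sigma_A=[j^1 s]_G$. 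Pointwise this gives $j^1_x s'=(j^1_x s)\cdot g(x)$ for a unique $g(x)\in G$, hence $s'=s\cdot g$ for a smooth $g\colon\mathcal U\to G$; but the identity $j^1_x(s\cdot g)=(j^1_x s)\cdot g(x)=j^1_x\big(s\cdot g(x)\big)$ holds for all $x$ only if $dg_x=0$, so $g$ is locally constant and, $\mathcal U$ being connected, equal to a fixed $g_0\in G$, giving $s'=s\cdot g_0$.

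The main obstacle is the construction of $s$: producing a globally defined, smooth $A$-horizontal section on a suitable domain and checking that ``trivial holonomy on a domain'' is exactly what makes $A$-parallel transport well-defined there, together with the observation that the flatness of $A$ forced by the first hypothesis guarantees that such domains always exist locally --- which is why reconstruction is asserted only locally. Once this is in place, everything else is either a direct reading of the definition of $C_0^{k-1}(P)$ or a citation of the equivalences already established in Theorem \ref{theorem:reducedequations}.
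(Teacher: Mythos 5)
Your proposal is correct and follows essentially the same route as the paper: flatness of $A$ is read off from $j^{k-1}\sigma_A$ taking values in $C_0^{k-1}(P)$, the section $s$ is obtained from the horizontal foliation (your parallel-transport description of the leaf through $y_0$ is the same object, with trivial holonomy guaranteeing it is a graph over $\mathcal U$), criticality follows from the equivalences of Theorem \ref{theorem:reducedequations}, and uniqueness up to a constant $g\in G$ comes from the fact that any other critical section projecting onto $\sigma_A$ is $A$-horizontal. Your gauge-function argument ($dg_x=0$, hence $g$ locally constant) is just a coordinate rephrasing of the paper's observation that the remaining leaves are $s(\mathcal U)\cdot g$, so there is no substantive difference.
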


\begin{proof}
From the condition $j^{k-1}\sigma_{A}\in\Gamma\left(\mathcal U,\pi_{C^{k-1}_0(P),X}\right)$ we know that ${A}\in\Omega^1(P|_{\mathcal U},\mathfrak g)$ is a flat connection. Therefore, there exists a foliation of $P|_{\mathcal U}$ given by the integral leaves of ${A}$. The trivial holonomy ensures that each integral leaf intersects once and only once each fiber of $P|_{\mathcal U}$. Subsequently, given a integral leaf of ${A}$, it defines a section $s\in\Gamma(\mathcal U,\pi_{P,X})$ projecting onto $\sigma_{A}$. This section is critical thanks to theorem \ref{theorem:reducedequations}. Moreover, the remaining integral leaves of ${A}$ are obtained from $s$ as follows, 
\begin{equation*}
\mathcal F_g=s(\mathcal U)\cdot g\subset P|_{\mathcal U},\qquad g\in G.
\end{equation*}
Hence, given another critical section, $\tilde s\in\Gamma(\mathcal U,\pi_{P,X})$, projecting onto $\sigma_{A}$, we conclude that $\tilde s=s\cdot g$ for some $g\in G$, since $\tilde s(\mathcal U)$ must belong to the previous family.
\end{proof}

If $X$ is a simply connected manifold, then every flat connection has trivial holonomy and we have the following equivalence
\begin{equation*}
\left(j^{2k}s\right)^*\mathcal{EL}(\mathfrak L)=0\qquad\Longleftrightarrow\qquad\left\{\begin{array}{l}
\displaystyle\left(\textrm{\normalfont div}^{{A}_0}-\ad_{\overline\sigma_{A}}^*\right)\left(\left(j^{2k-2}\overline\sigma_{A}\right)^*\mathcal{EL}\left(\hat{\mathfrak l}\right)\right)=0,\\
\displaystyle\tilde F^A=0.
\end{array}\right.
\end{equation*}
In an arbitrary manifold the equivalence above is valid locally only.

\begin{remark}
    That fact that the reconstruction process requires the flatness of a connection is a characteristic trait in Field Theories that does not show up in Mechanics, i.e., when $\dim X=1$ (see for example \cite{CaGaRa2001,ElGaHoRa2011} for comments on this situation). However, there is a significant difference from first order to higher-order reduction. Indeed, the flatness condition is a compatibility equation that must be incorporated by hand for $k=1$ theories, whereas this condition comes directly from the geometry of the reduced phase space for $k>1$. In other words, for higher-order Euler--Poincaré reduction the reconstruction is not inserted ad hoc, but it is intrinsic in the nature of the reduced sections.
\end{remark}

\section{Noether theorem}\label{sec:noether}

The well-known Noether theorem establishes that infinitesimal symmetries of a Lagrangian density lead to conserved quantities for the dynamics of the system. As in the previous section, let $\mathfrak L=Lv:J^k P\to\bigwedge^n T^*X$ be a $G$-invariant $k$-th order Lagrangian on a principal $G$-bundle, $\pi_{P,X}:P\to X$. The aim of this section is to prove that the conservation laws of $\mathfrak L$ arising from its $G$-invariance are equivalent to the higher-order Euler--Poincaré field equations. 

We start by recalling the definition of an infinitesimal symmetry and the Noether theorem for higher-order Lagrangian densities. (cf. \cite[\S 10]{Ma1985}).

\begin{definition}
An \emph{infinitesimal symmetry} of $\mathfrak L$ is a projectable vector field $U\in\mathfrak X(P)$ such that $\pounds_{U^{(k)}}\mathfrak L=0$, where $\pounds$ denotes the Lie derivative and $U^{(k)}\in\mathfrak X(J^k P)$ is the $k$-th order prolongation of $U$.
\end{definition}

For a vertical vector field $V\in\mathfrak X(P)$, being an infinitesimal symmetry is equivalent to $\pounds_{V^{(2k-1)}}\Theta_{\mathfrak L}=0$, for any covariant Cartan form, $\Theta_{\mathfrak L}\in\Omega^n(J^{2k-1} P)$, of $\mathfrak L$.

\begin{theorem}[Noether theorem]
Let $s\in\Gamma(\pi_{P,X})$ and $U\in\mathfrak X(U)$ be a critical section and an infinitesimal symmetry of $\mathfrak L$, respectively. Then
\begin{equation*}
{\rm d}\left(\left(j^{2k-1}s\right)^*\iota_{U^{(2k-1)}}\Theta_{\mathfrak L}\right)=0,
\end{equation*}
for any covariant Cartan form, $\Theta_{\mathfrak L}$, of $\mathfrak L$.
\end{theorem}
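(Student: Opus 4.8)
The plan is to run the classical Cartan-form proof of Noether's theorem in the higher-order setting, using the covariant Cartan form $\Theta_{\mathfrak L}\in\Omega^n(J^{2k-1}P)$ as the Poincaré--Cartan $n$-form. Two properties of $\Theta_{\mathfrak L}$ are needed, both standard (cf.\ \cite[\S 6.5]{Sa1989}) and implicit in the discussion above: \emph{(a)} it restricts to the Lagrangian along holonomic sections, i.e.\ $(j^{2k-1}\sigma)^*\Theta_{\mathfrak L}=\mathfrak L(j^k\sigma)$ for every $\sigma\in\Gamma(\pi_{P,X})$; equivalently, $\Theta_{\mathfrak L}=\pi_{2k-1,k}^*\mathfrak L+\Theta^{c}$ with $\Theta^{c}$ a contact form (one vanishing on all holonomic sections). \emph{(b)} ${\rm d}\Theta_{\mathfrak L}$ encodes the Euler--Lagrange form: reading the computation behind Proposition \ref{prop:actionvariation} through \emph{(a)} gives $\int_{\mathcal U}(j^{2k-1}s)^*\iota_{V^{(2k-1)}}{\rm d}\Theta_{\mathfrak L}=\int_{\mathcal U}\langle(j^{2k}s)^*\mathcal{EL}(\mathfrak L),V\circ s\rangle$ for compactly supported vertical $V\in\mathfrak X(P)$, and more generally $(j^{2k-1}s)^*\iota_{W}{\rm d}\Theta_{\mathfrak L}=0$ for every $W\in\mathfrak X(J^{2k-1}P)$ whenever $s$ is critical (the Hamilton--Cartan equations).

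I would then apply Cartan's magic formula to $U^{(2k-1)}\in\mathfrak X(J^{2k-1}P)$ and pull back along $j^{2k-1}s$; since ${\rm d}$ commutes with pullbacks,
\[
{\rm d}\!\left(\left(j^{2k-1}s\right)^{*}\iota_{U^{(2k-1)}}\Theta_{\mathfrak L}\right)=\left(j^{2k-1}s\right)^{*}\pounds_{U^{(2k-1)}}\Theta_{\mathfrak L}-\left(j^{2k-1}s\right)^{*}\iota_{U^{(2k-1)}}{\rm d}\Theta_{\mathfrak L}.
\]
It remains to kill the two terms on the right. For the first, writing $\Theta_{\mathfrak L}=\pi_{2k-1,k}^{*}\mathfrak L+\Theta^{c}$ and using that $U^{(2k-1)}$ is $\pi_{2k-1,k}$-related to $U^{(k)}$ (its flow is $\{j^{2k-1}\phi_{t}\}$ by Proposition \ref{prop:prolongationflow}, compatible via \eqref{eq:commutativityjetlift} with the flow $\{j^{k}\phi_{t}\}$ of $U^{(k)}$) together with the symmetry hypothesis $\pounds_{U^{(k)}}\mathfrak L=0$, one gets $\pounds_{U^{(2k-1)}}\Theta_{\mathfrak L}=\pi_{2k-1,k}^{*}\pounds_{U^{(k)}}\mathfrak L+\pounds_{U^{(2k-1)}}\Theta^{c}=\pounds_{U^{(2k-1)}}\Theta^{c}$, which is again a contact form because the flow $\{j^{2k-1}\phi_{t}\}$ maps holonomic sections to holonomic sections and hence preserves the contact ideal; therefore it pulls back to zero along $j^{2k-1}s$ (equivalently, one may invoke the equivalence between being an infinitesimal symmetry and $\pounds_{U^{(2k-1)}}\Theta_{\mathfrak L}=0$ recorded above). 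For the second term, $s$ is critical, so $(j^{2k}s)^{*}\mathcal{EL}(\mathfrak L)=0$ by Theorem \ref{theorem:ELequations}, and property \emph{(b)} gives $(j^{2k-1}s)^{*}\iota_{U^{(2k-1)}}{\rm d}\Theta_{\mathfrak L}=0$. Combining, ${\rm d}\big((j^{2k-1}s)^{*}\iota_{U^{(2k-1)}}\Theta_{\mathfrak L}\big)=0$, as claimed; the assertion "for any covariant Cartan form" follows because the only inputs used were \emph{(a)} and \emph{(b)}, which hold for every admissible choice of $\Theta_{\mathfrak L}$.

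I expect the main obstacle to be bookkeeping around the non-canonical nature of the higher-order Cartan form. In the second step, when $U$ is not vertical one must control the holonomic component of $U^{(2k-1)}$ along $j^{2k-1}s$; this is handled by splitting ${\rm d}\Theta_{\mathfrak L}$ into the genuinely well-defined source (Euler--Lagrange) form---whose contraction and holonomic pullback reproduces $\langle(j^{2k}s)^*\mathcal{EL}(\mathfrak L),\cdot\rangle$ and hence vanishes on a critical section---plus a remainder carrying at least two contact factors, which dies after a single interior product followed by a holonomic pullback, irrespective of the ambiguity in $\Theta_{\mathfrak L}$. Everything else---Cartan's formula, the flow description of $U^{(2k-1)}$, and the invariance of the contact ideal under prolonged flows---is routine.
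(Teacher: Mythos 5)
The paper does not actually prove this theorem: it is stated as a recalled classical result with a citation to [Ma1985, \S 10], so there is no in-paper argument to compare against. Your proof is the standard Cartan-form derivation of the Noether current conservation law, and it is correct: Cartan's magic formula splits ${\rm d}\,\iota_{U^{(2k-1)}}\Theta_{\mathfrak L}$ into $\pounds_{U^{(2k-1)}}\Theta_{\mathfrak L}-\iota_{U^{(2k-1)}}{\rm d}\Theta_{\mathfrak L}$; the first term dies on holonomic pullback because $\Theta_{\mathfrak L}-\pi_{2k-1,k}^*\mathfrak L$ lies in the contact ideal (visible from \eqref{eq:Cartanformlocal}), prolonged flows preserve that ideal, and $\pounds_{U^{(k)}}\mathfrak L=0$; the second dies because $p_0({\rm d}\Theta_{\mathfrak L})=0$ automatically (a horizontal $(n+1)$-form over an $n$-dimensional base vanishes), $p_1({\rm d}\Theta_{\mathfrak L})=\mathcal{EL}(\mathfrak L)$ for any Lepage equivalent, the at-least-$2$-contact remainder survives one interior product as a contact form, and $s$ is critical. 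Two small caveats: the property $p_1({\rm d}\Theta_{\mathfrak L})=\mathcal{EL}(\mathfrak L)$ is exactly the Lepage condition and is what the phrase ``any covariant Cartan form'' must be taken to mean (it holds for the forms built as in \eqref{eq:Cartanformlocal} and is implicitly what Proposition \ref{prop:actionvariation} encodes), so it should be stated as a hypothesis rather than derived; and your parenthetical appeal to the equivalence ``infinitesimal symmetry $\Leftrightarrow\pounds_{U^{(2k-1)}}\Theta_{\mathfrak L}=0$'' is recorded in the paper only for \emph{vertical} $U$, so for a general projectable symmetry you should rely on your main contact-ideal argument, which does cover that case.
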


Henceforth, we focus on the covariant Cartan form given locally by \eqref{eq:Cartanformlocal}. As our Lagrangian is $G$-invariant, each infinitesimal generator $\xi^*\in\mathfrak X(Y)$ of an element $\xi\in\mathfrak g$ is an infinitesimal symmetry of $\mathfrak L$. We define the \emph{Noether current} as the $\mathfrak g^*$-valued form $\mathcal J\in\Omega^{n-1}\left(J^{2k-1}P,\mathfrak g^*\right)$ given by
\begin{equation*}
\left\langle\mathcal J\left(j_x^{2k-1}s\right),\xi\right\rangle=\iota_{(\xi^*)^{(2k-1)}}\Theta_{\mathfrak L}\left(j_x^{2k-1}s\right),\qquad j_x^{2k-1}s\in J^{2k-1}P,~\xi\in\mathfrak g,
\end{equation*}
where $\langle\cdot,\cdot\rangle$ denotes the dual pairing, as usual. Observe that the Noether theorem ensures that
\begin{equation}\label{eq:noether}
{\rm d}\left( \left(j^{2k-1}s\right)^*\mathcal J\right)=0
\end{equation}
for every critical section $s\in\Gamma(\pi_{Y,X})$. We are ready to show the main theorem of this section.

\begin{theorem}
Let $\pi_{P,X}:P\to X$ be a principal $G$-bundle, $\mathfrak L=Lv:J^k P\to\bigwedge^n T^*X$ be a $G$-invariant Lagrangian, $\mathfrak l=lv:C^{k-1}_0(P)\to\bigwedge^n T^*X$ be the reduced Lagrangian, and $\mathcal J\in\Omega^{n-1}\left(J^{2k-1}P,\mathfrak g^*\right)$ be the Noether current given by the Cartan form $\Theta_{\mathfrak L}\in\Omega^n(J^{2k-1} P)$ locally defined by \eqref{eq:Cartanformlocal}. Consider a section, $s\in\Gamma(\pi_{Y,X})$ and the corresponding reduced section, $\sigma_{A}\in\Gamma\left(\pi_{C(P),X}\right)$. Then the Noether equation \eqref{eq:noether} holds if and only if the $k$-th order Euler--Poincaré field equations hold for the principal connection ${A}\in\Omega^1(P,\mathfrak g)$, i.e.,
\begin{equation*}
\textrm{\normalfont div}^{A}\left(\left(j^{2k-2}\sigma _A\right)^*\mathcal{EL}\left(\hat l\right)\right)=0,
\end{equation*}
where $\hat l:J^{k-1}(C(P))\to\bigwedge^n T^*X$ is an extension of the reduced Lagrangian.
\end{theorem}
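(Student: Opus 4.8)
The strategy is to prove, for an \emph{arbitrary} section $s\in\Gamma(\pi_{P,X})$ (not necessarily critical), that the Noether equation \eqref{eq:noether} is equivalent to the full Euler--Lagrange field equations $\left(j^{2k}s\right)^*\mathcal{EL}(\mathfrak L)=0$, and then to read off the Euler--Poincaré form of those equations from Theorem~\ref{theorem:reducedequations}. The bridge is the first variation formula in the form that retains the boundary term encoded by the covariant Cartan form: for every $\pi_{P,X}$-vertical, $\pi_{P,X}$-projectable $V\in\mathfrak X(P)$ and every $s\in\Gamma(\pi_{P,X})$,
\begin{equation*}
\left(j^k s\right)^*\pounds_{V^{(k)}}\mathfrak L=\left\langle\left(j^{2k}s\right)^*\mathcal{EL}(\mathfrak L),V\circ s\right\rangle+{\rm d}\left(\left(j^{2k-1}s\right)^*\iota_{V^{(2k-1)}}\Theta_{\mathfrak L}\right).
\end{equation*}
This is the pointwise identity underlying Proposition~\ref{prop:actionvariation} and the Noether theorem above (cf. \cite[\S 6.5]{Sa1989}, \cite[\S 10]{Ma1985}): integrating over any domain $\mathcal U$ with compact closure turns the last term into a boundary integral by Stokes, and since the resulting equality of integrals holds for every such $\mathcal U$, the integrands coincide. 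First I would record this identity for the Cartan form \eqref{eq:Cartanformlocal}; the particular choice is immaterial, since $\mathcal{EL}(\mathfrak L)$ is canonical.

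Applying the identity to $V=\xi^*$, the infinitesimal generator of $\xi\in\mathfrak g$ (which is $\pi_{P,X}$-vertical and $\pi_{P,X}$-projectable): the flow of $(\xi^*)^{(k)}$ is $\{j^k R_{\exp(t\xi)}\}$ by Proposition~\ref{prop:prolongationflow}, so $G$-invariance of $\mathfrak L$ yields $\pounds_{(\xi^*)^{(k)}}\mathfrak L=0$, while $\iota_{(\xi^*)^{(2k-1)}}\Theta_{\mathfrak L}=\langle\mathcal J,\xi\rangle$ by definition of the Noether current. Hence
\begin{equation*}
\left\langle{\rm d}\left(\left(j^{2k-1}s\right)^*\mathcal J\right),\xi\right\rangle=-\left\langle\left(j^{2k}s\right)^*\mathcal{EL}(\mathfrak L),\xi^*\circ s\right\rangle,\qquad\xi\in\mathfrak g.
\end{equation*}
Since $G$ acts freely, $\xi\mapsto\xi^*_{s(x)}$ is a linear isomorphism $\mathfrak g\to V_{s(x)}P$ for each $x$; therefore the right-hand side vanishes for all $\xi\in\mathfrak g$ exactly when $\left(j^{2k}s\right)^*\mathcal{EL}(\mathfrak L)=0$. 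This gives the desired equivalence between the Noether equation \eqref{eq:noether} and the Euler--Lagrange field equations.

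To conclude, I would invoke the equivalence $(ii)\Leftrightarrow(iv)$ of Theorem~\ref{theorem:reducedequations} with the reference connection taken to be $A_0=A$ itself, which is legitimate: $j^{k-1}\sigma_A$ lies in $C^{k-1}_0(P)$ automatically (being the reduction of a genuine section of $P$), and, by the remark following Theorem~\ref{theorem:reducedequations}, the reduced equations are independent of the chosen extension $\hat l$ of the reduced Lagrangian. For $A_0=A$ one has $\overline\sigma_A=0$, so $(iv)$ collapses to $\textrm{\normalfont div}^A\!\left(\left(j^{2k-2}\sigma_A\right)^*\mathcal{EL}(\hat l)\right)=0$, which is exactly the asserted equation. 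Chaining the two equivalences proves the theorem.

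The main obstacle is the first step: establishing the intrinsic first variation identity, with the boundary contribution written through the covariant Cartan form and valid for every (not only critical) section. This relies on the defining property of the higher-order covariant Cartan form and on careful bookkeeping of the jet orders $k$, $2k-1$, $2k$; it is also where the choice of Cartan form would need to be handled, although the conclusion is choice-independent. One could sidestep it by a direct local computation in the normal bundle coordinates of Section~\ref{sec:reducedspace}: contract $(\xi^*)^{(2k-1)}=a_\beta^\alpha(y)\xi^\beta\partial_\alpha+\frac{d^{|J|}}{dx^J}\!\left(a_\beta^\alpha(y)\xi^\beta\right)\partial_\alpha^J$ against \eqref{eq:Cartanformlocal}, rewrite $\partial L/\partial y^\alpha_J$ in terms of $\partial\hat l/\partial A^\beta_{\mu,J}$ via Proposition~\ref{prop:Thetakcoordinates} and the higher-order Leibniz rule (Lemma~\ref{lemma:higherleibniz}), and match ${\rm d}\!\left(\left(j^{2k-1}s\right)^*\mathcal J\right)$ with the local Euler--Poincaré equations of Corollary~\ref{corollary:localequations}; this is routine but lengthy.
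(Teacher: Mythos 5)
Your proposal is correct, but it takes a genuinely different route from the paper. The paper proves the equivalence by a direct computation in normal bundle coordinates: it contracts $(\xi^*)^{(2k-1)}$ against the local Cartan form \eqref{eq:Cartanformlocal}, converts the derivatives $\partial L/\partial y^\alpha_{I+J+1_\mu}$ into derivatives $\partial\hat l/\partial A^\beta_{\mu,I+J+K}$ via Proposition \ref{prop:Thetakcoordinates} and the higher-order Leibniz rule (Lemma \ref{lemma:higherleibniz}), simplifies using the relation between the matrices $(a^\alpha_\beta)$ and $(b^\alpha_\beta)$, and recognizes ${\rm d}\left(\left(j^{2k-1}s\right)^*\mathcal J\right)=0$ as the local Euler--Poincaré equations of Corollary \ref{corollary:localequations} evaluated at $\overline\sigma_A=\hat 0$, i.e.\ with $A_0=A$ --- exactly the ``routine but lengthy'' alternative you sketch in your final paragraph. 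Your primary route --- the pointwise first variation identity with its boundary term, $G$-invariance killing the Lie-derivative term, freeness of the action turning ``vanishing against all $\xi^*$'' into the full Euler--Lagrange equations, and then $(ii)\Leftrightarrow(iv)$ of Theorem \ref{theorem:reducedequations} with $A_0=A$ as licensed by the remark following it --- is shorter, intrinsic, and makes the reason for the equivalence transparent: on a principal bundle the infinitesimal generators span the whole vertical bundle, so the Noether currents detect all of $\mathcal{EL}(\mathfrak L)$. What it costs you is the pointwise first variation identity for the specific Cartan form \eqref{eq:Cartanformlocal}; your justification of it (``the equality of integrals holds for every $\mathcal U$, hence the integrands coincide'') is circular as phrased, because the integral identity with the correct boundary term is itself obtained by tracking the boundary contributions in the pointwise integration by parts --- precisely the computation that identifies the boundary term with $\iota_{V^{(2k-1)}}\Theta_{\mathfrak L}$. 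Since that identity is standard (it is what proves the Noether theorem the paper quotes from \cite{Ma1985}) and you correctly single it out as the main obstacle, this is a presentational weakness rather than a gap. One thing the paper's computation buys that your argument does not is the explicit local formula for the reduced Noether current $\left(j^{2k-1}s\right)^*\mathcal J$ in terms of $\hat l$, which is of independent interest.
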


\begin{proof}
Let $(x^\mu,y^\alpha;y_J^\alpha)$ be adapted coordinates for $J^{2k}P$. Some conditions will be imposed on these coordinates along the proof. Firstly, suppose that $(y^\alpha)$ are normal coordinates on some neighborhood of the identity element. Hence, given a basis $\{B_\alpha\mid1\leq\alpha\leq m\}$ of $\mathfrak g$, we may use equation \eqref{eq:xitopartiallocal} and Proposition \ref{prop:jetprolongationvector} (recall that $\xi^*\in\mathfrak X(P)$ is a vertical vector field) to obtain
\begin{equation*}
(\xi^*)_y^{(2k-1)}=\xi^\beta a_\beta^\alpha(y)\partial_\alpha+\xi^\beta\frac{d^{|J|}a_\beta^\alpha(y)}{dx^J}\partial_\alpha^J,\qquad y=(x^\mu,y^\alpha)\in P.
\end{equation*}
If the coordinates are chosen so that the volume form is given by $v=d^n x=dx^1\wedge\dots\wedge dx^n$, the left interior product of the covariant Cartan form \eqref{eq:Cartanformlocal} by $(\xi^*)^{(2k-1)}$ reads
\begin{equation*}
\iota_{(\xi^*)^{(2k-1)}}\Theta_{\mathfrak L}=\sum_{|I|=0}^{k-1}\sum_{|J|=0}^{k-|I|-1}(-1)^{|J|} \xi^\beta\frac{d^{|J|}}{dx^J}\left(\frac{\partial L}{\partial y_{I+J+1_\mu}^\alpha}\right)\frac{d^{|I|}a_\beta^\alpha(y)}{dx^I}d^{n-1}x_\mu,
\end{equation*}
where $d^{n-1}x_\mu=\iota_{\partial_\mu}d^n x$. In coordinates, the critical section is written as $s(x^\mu)=(x^\mu,s^\alpha(x^\mu))$, for some local functions $s^\alpha\in C^\infty(X)$, $1\leq\alpha\leq m$. From the previous expression and \eqref{eq:totalderivativesection}, we get
\begin{equation}\label{eq:proofnoether0}
 \left(j^{2k-1}s\right)^*\mathcal J=\sum_{|I|=0}^{k-1}\sum_{|J|=0}^{k-|I|-1}(-1)^{|J|} \xi^\beta\frac{\partial^{|J|}}{\partial x^J}\left(\frac{\partial L}{\partial y_{I+J+1_\mu}^\alpha}\left(j^k s\right)\right)\frac{\partial^{|I|}a_\beta^\alpha(s^\alpha)}{\partial x^I}d^{n-1}x_\mu.
\end{equation}

On the other hand, let $(x^\mu,{A}_\mu^\alpha;{A}_{\mu,J}^\alpha)$ be adapted coordinates for $J^{2k-1}C(P)$. Observe that locally $C(P)\simeq T^*X\otimes \ad(P)$, so we will use these coordinates for both spaces indifferently. For the reduced section, we write $\sigma_{A}(x^\mu)=\overline\sigma_{A}(x^\mu)=(x^\mu,\sigma_\mu^\alpha(x^\mu))$ for some local functions $\sigma_\mu^\alpha\in C^\infty(X)$, $1\leq\mu\leq n$, $1\leq\alpha\leq m$. Since $\sigma_{A}=\left[j^1 s\right]_G$, Proposition \ref{prop:Thetakcoordinates} yields
\begin{equation*}
\sigma_\mu^\alpha=b_\beta^\alpha(s^\alpha)\partial_\mu s^\beta,\qquad 1\leq\mu\leq n,~1\leq\alpha\leq m.
\end{equation*}
From this and Lemma \ref{lemma:higherleibniz}, we get
\begin{align}\label{eq:proofnoether1}
& \frac{\partial}{\partial y_{I+J+1_\mu}^\alpha}\left(\frac{\partial^{|I+J+K|}}{\partial x^{I+J+K}}\left(b_\gamma^\beta(s^\alpha)\partial_\mu s^\gamma\right)\right)\\\nonumber
& \hspace{15mm}=\frac{\partial}{\partial y_{I+J+1_\mu}^\alpha}\left(\sum_{I^{(1)}+I^{(2)}=I+J+K}\frac{(I+J+K)!}{I^{(1)}!I^{(2)}!}\frac{\partial^{|I^{(1)}|}b_\gamma^\beta(s^\alpha)}{\partial x^{I^{(1)}}}\frac{\partial^{|I^{(2)}|}\partial_\mu s^\gamma}{\partial x^{I^{(2)}}}\right)\\\nonumber
& \hspace{15mm}=\frac{(I+J+K)!}{K!(I+J)!}\frac{\partial^{|K|} b_\alpha^\beta(s^\alpha)}{\partial x^K},
\end{align}
for each $1\leq\mu\leq n$, $1\leq\alpha,\beta\leq m$, $0\leq|I|\leq k-1$, $0\leq|J|\leq k-|I|-1$ and $0\leq|K|\leq k-|I|-|J|-1$. Moreover, by definition the extension of the reduced Lagrangian satisfies
\begin{equation*}
\hat l(x^\mu,\sigma_\mu^\alpha(x^\mu);\partial_J\sigma_\mu^\alpha(x^\mu))=\hat l\left(j_x^{k-1}\overline\sigma_{A}\right)=L\left(j_x^k s\right)=L(x^\mu,s^\alpha(x^\mu);\partial_J s^\alpha(x^\mu)),
\end{equation*}
where $\partial_J=\partial/\partial x^J$. By using this relation and equation \eqref{eq:proofnoether1}, we obtain
\begin{equation*}
\frac{\partial L}{\partial y_{I+J+1_\mu}^\alpha}\left(j^k s\right)=\sum_{|K|=0}^{k-|I|-|J|-1}\frac{(I+J+K)!}{K!(I+J)!}\frac{\partial\hat l}{\partial{A}_{\mu,I+J+K}^\beta}\left(j^{k-1}\sigma_{A}\right)\frac{\partial^{|K|} b_\alpha^\beta(s^\alpha)}{\partial x^K},
\end{equation*}
for each $1\leq\mu\leq n$, $0\leq|I|\leq k-1$ and $0\leq|J|\leq k-|I|-1$. For brevity, we have denoted ${A}_{\mu,J_0}^\alpha={A}_\mu^\alpha$ for $J_0=(0,\dots,0)$. By introducing this in \eqref{eq:proofnoether0} and by using Lemma \ref{lemma:higherleibniz}, we obtain
\begin{align}\label{eq:proofnoether2}
 \left(j^{2k-1}s\right)^*\mathcal J & =\sum_{|I|=0}^{k-1}\sum_{|J|=0}^{k-|I|-1}\sum_{|K|=0}^{k-|I|-|J|-1}(-1)^{|J|}\frac{(I+J+K)!}{K!(I+J)!}\xi^\beta\\\nonumber
& \hspace{10mm}\frac{\partial^{|J|}}{\partial x^J}\left(\frac{\partial\hat l}{\partial{A}_{\mu,I+J+K}^\beta}\left(j^{k-1}\overline\sigma_{A}\right)\frac{\partial^{|K|} b_\alpha^\beta(s^\alpha)}{\partial x^K}\right)\frac{\partial^{|I|}a_\beta^\alpha(s^\alpha)}{\partial x^I}d^{n-1}x_\mu\\\nonumber
& =\sum_{|I|=0}^{k-1}\sum_{|J|=0}^{k-|I|-1}\sum_{|K|=0}^{k-|I|-|J|-1}\sum_{|L|=0}^J (-1)^{|J|}\frac{(I+J+K)!J!}{K!(I+J)!L!(J-L)!}\xi^\beta\\\nonumber
& \hspace{10mm}\frac{\partial^{|L|}}{\partial x^L}\left(\frac{\partial\hat l}{\partial{A}_{\mu,I+J+K}^\beta}\left(j^{k-1}\sigma_{A}\right)\right)\frac{\partial^{|J+K-L|} b_\alpha^\beta(s^\alpha)}{\partial x^{J+K-L}}\frac{\partial^{|I|}a_\beta^\alpha(s^\alpha)}{\partial x^I}d^{n-1}x_\mu.
\end{align}
In addition, we have $b_\alpha^\beta(s^\alpha)a_\beta^\alpha(s^\alpha)=m$. Hence, from Lemma \eqref{lemma:higherleibniz} we get
\begin{equation*}
\frac{\partial^{|M|}}{\partial x^M}\left(b_\alpha^\beta(s^\alpha)a_\beta^\alpha(s^\alpha)\right)=\sum_{M^{(1)}+M^{(2)}=M}\frac{M!}{M^{(1)}!M^{(2)}!}\frac{\partial^{|M^{(1)}|}b_\alpha^\beta(s^\alpha)}{\partial x^{M^{(1)}}}\frac{\partial^{|M^{(2)}|}a_\beta^\gamma(s^\alpha)}{\partial x^{M^{(2)}}}=0,\qquad|M|>0.
\end{equation*}
By introducing this in \eqref{eq:proofnoether2}, we arrive at
\begin{equation*}
 \left(j^{2k-1}s\right)^*\mathcal J=m\sum_{|J|=0}^{k-1}(-1)^{|J|} \xi^\alpha\frac{\partial^{|J|}}{\partial x^J}\left(\frac{\partial\hat l}{\partial{A}_{\mu,J}^\alpha}\left(j^{k-1}\overline\sigma_{A}\right)\right)d^{n-1}x_\mu.
\end{equation*}
Thence, the Noether conservation law, ${\rm d}\left( \left(j^{2k-1}s\right)^*\mathcal J\right)=0$, locally reads
\begin{equation}\label{eq:noetherlocal}
\sum_{|J|=0}^{k-1}(-1)^{|J|}\frac{\partial^{|J+1_\mu|}}{\partial x^{J+1_\mu}}\left(\frac{\partial\hat l}{\partial {A}_{\mu,J}^\alpha}\left(j^{k-1} \sigma_{A}\right)\right)=0,\qquad 1\leq\alpha\leq m,
\end{equation}
where we have used that $\xi=\xi^\alpha B_\alpha\in\mathfrak g$ is arbitrary. 

Lastly, fix $x_0=(x_0^\mu)\in X$ and assume that our adapted coordinates are chosen so that ${A}$ is flat at $y=s(x_0)$, i.e., ${A}_{s(x_0)}=(dy^\alpha)_{s(x_0)}\otimes B_\alpha$. It is now clear that \eqref{eq:noetherlocal} are exactly the local equations computed in Corollary \ref{corollary:localequations} for $\overline\sigma_{A}=\hat 0$, i.e., $\sigma_\mu^\alpha=0$ for $1\leq\mu\leq n$ and $1\leq\alpha\leq m$.
\end{proof}

\section{Multivariate \texorpdfstring{$k$}{k}-splines on Lie groups}\label{sec:splines}

Variational splines are piecewise polynomial functions that allow for interpolating while minimizing some cost functional. For this reason, they have many applications in different areas such as optimal control theory, medical imaging or robotics (cf., for instance, \cite{La2009}). Here we focus on higher-order splines, i.e., those in which the cost functional depends on higher-order derivatives. Namely, the reduction theory for 1-dimensional $k$-splines on Lie groups developed in \cite[\S 3.2]{GaHoMeRaVi2012} (see also \cite{MaSiKr2010}) is extended to multivariate functions on Lie groups (cf. \cite{ChiOzMa2004}).

Let $G$ be a Lie group endowed with a right-invariant, (pseudo-)Riemannian metric, $\mathbf g$. The inner product induced on the Lie algebra, $\mathfrak g$, is denoted by the same symbol, $\mathbf g:\mathfrak g\times\mathfrak g\to\mathbb R$, and the induced norms on $TG$ and $\mathfrak g$ are denoted by $\left\|\cdot\right\|_\mathbf g:TG\to\mathbb R$ and $\left\|\cdot\right\|_{\mathfrak g}:\mathfrak g\to\mathbb R$, respectively. Similarly, the adjoint of $\ad_\xi$ induced by $\mathbf g$ is denoted by $\ad_\xi^{\normalfont\dagger}:\mathfrak g\to\mathfrak g$, i.e.,
\begin{equation*}
\mathbf g(\ad_\xi(\eta),\zeta)=\mathbf g(\eta,\ad_\xi^{\normalfont\dagger}(\zeta)),\qquad\xi,\eta,\zeta\in\mathfrak g.    
\end{equation*} 
On the other hand, let $X=\mathbb R^n$ (recall Remark \ref{remark:noncompact}) with the standard (global) coordinates and volume form given by $(x^\mu)=(x^1,\dots,x^n)$ and $d^n x=dx^1\wedge\dots\wedge dx^n$, respectively. The configuration bundle is given by
\begin{equation*}
\pi_{P,X}:P=X\times G\to X,\quad(x,g)\mapsto x.
\end{equation*}
The bundle being trivial yields $J^kP=J^k(X,G)$, the family of $k$-jets of functions $\varsigma:X\to G$, whose elements are denoted by $j_x^k\varsigma$. Let $\nabla:\mathfrak X(G)\times\mathfrak X(G)\to\mathfrak X(G)$ be the Levi--Civita connection of $\mathbf g$. Its pull-back by a (local) function $\varsigma:X\to G$, which is a linear connection on $\pi_{\varsigma^*TG,X}:\varsigma^*TG\to X$, is denoted by $\varsigma^*\nabla$ and, given $1\leq\mu\leq n$, the corresponding covariant derivative is denoted by
\begin{equation*}
\frac{\nabla\ }{\partial x^\mu}=\varsigma^*\nabla_{\partial_\mu}:\Gamma(\pi_{\varsigma^*TG,X})\to\Gamma(\pi_{\varsigma^*TG,X}).
\end{equation*}

\begin{definition}
The Lagrangian for \emph{multivariate $k$-splines} on $G$ is given by
\begin{equation}\label{eq:lagrangianksplines}
L_s\left(j_x^k\varsigma\right)=\frac{1}{2}\kappa^\mu\left\|\left.\frac{\nabla^{k-1}\ }{\partial(x^\mu)^{k-1}}\right|_x(\partial_\mu\varsigma)\right\|_\mathbf g^2,\qquad j_x^k\varsigma\in J^k(X,G),
\end{equation}
for certain $\kappa^\mu\in\mathbb R$, $0\leq\mu\leq n$.
\end{definition}

Given a (local) function $\varsigma:X\to G$, we set
\begin{equation*}
\sigma=dR_{\varsigma^{-1}}\circ d\varsigma=\left(dR_{\varsigma^{-1}}\circ\partial_\mu\varsigma\right)dx^\mu\in\Gamma(\pi_{T^*X\otimes\mathfrak g,X})=\Omega^1(X,\mathfrak g),
\end{equation*}
where $R_g:G\to G$ denotes the right multiplication by $g\in G$. This induces the following isomorphism,
\begin{equation*}
J^1(X,G)\simeq G\ltimes(T^*X\otimes\mathfrak g),\quad j_x^1\varsigma\mapsto\left(\varsigma(x),\sigma(x)\right),
\end{equation*}
where $\ltimes$ denotes the fibered semidirect product given by the adjoint representation (cf. \cite[\S 4.3.1]{CaRo2023}). Hence, the isomorphism $\Xi_k$ given in \eqref{eq:diagram} becomes
\begin{equation*}
J^k(X,G)/G\ni j_x^k\varsigma\mapsto j_x^{k-1}\sigma\in C^{k-1}_0(P)
\end{equation*}
Observe that for the canonical principal connection on the trivial bundle $\pi_{P,X}$, the isomorphism \eqref{eq:isomorphismsCP} reduces to the identity and, thus, $\overline{C^{k-1}_0(P)}=C^{k-1}_0(P)$.

\begin{proposition}
The Lagrangian \eqref{eq:lagrangianksplines} is right invariant. Furthermore, a  (natural) extended reduced Lagrangian is
\begin{equation}\label{eq:reducedlagrangianksplines}
\hat\ell_s\left(j_x^{k-1}\sigma\right)=\frac{1}{2}\kappa^\mu\left\|\xi_\mu^{k-1}(x)\right\|_{\mathfrak g}^2,\qquad j_x^{k-1}\sigma\in J^{k-1}C(P),
\end{equation}
where $\sigma=\sigma_\mu dx^\mu$ and $\xi_\mu^j:X\to\mathfrak g$ are defined recursively as follows,
\begin{equation*}
\xi_\mu^0=\sigma_\mu,\qquad\xi_\mu^j=\partial_\mu\xi_\mu^{j-1}+\frac{1}{2}\left(\ad_{\sigma_\mu}^{\normalfont\dagger}\left(\xi_\mu^{j-1}\right)+\ad_{\xi_\mu^{j-1}}^{\normalfont\dagger}\left(\sigma_\mu\right)-\ad_{\sigma_\mu}\left(\xi_\mu^{j-1}\right)\right),~1\leq j\leq k-1.
\end{equation*}
\end{proposition}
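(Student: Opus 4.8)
The proof splits according to the two assertions. For \emph{right-invariance}, the key is that for fixed $g\in G$ the right translation $R_g:G\to G$ is an isometry of $(G,\mathbf g)$ and therefore preserves the Levi--Civita connection $\nabla$. First I would observe that $\partial_\mu(\varsigma\cdot g)=dR_g\circ(\partial_\mu\varsigma)$, so the fibrewise isomorphism $dR_g$ intertwines the pullback bundles $\varsigma^*TG$ and $(\varsigma\cdot g)^*TG$ over $X$. Since the pullback covariant derivative is characterised by $(\varsigma^*\nabla)_{\partial_\mu}(\tilde Z\circ\varsigma)=(\nabla_{d\varsigma(\partial_\mu)}\tilde Z)\circ\varsigma$ for $\tilde Z\in\mathfrak X(G)$, and $R_g$ preserves $\nabla$, it follows that $dR_g$ commutes with $\nabla/\partial x^\mu$; iterating $k-1$ times and using once more that $R_g$ is an isometry yields $L_s(j_x^k(\varsigma\cdot g))=L_s(j_x^k\varsigma)$.

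For the \emph{reduced Lagrangian} I would trivialise $\varsigma^*TG\simeq X\times\mathfrak g$ by the right logarithmic derivative $v\mapsto dR_{\varsigma^{-1}}(v)$; equivalently, with a basis $\{B_\alpha\}$ of $\mathfrak g$ and associated right-invariant vector fields $\bar B_\alpha$, one identifies $\bar B_\alpha\circ\varsigma$ with $B_\alpha$, under which $\partial_\mu\varsigma$ corresponds to $\sigma_\mu$. The computational core is the identity: if a section $Z$ of $\varsigma^*TG$ corresponds to a map $\zeta:X\to\mathfrak g$, then $\nabla Z/\partial x^\mu$ corresponds to
\begin{equation*}
\partial_\mu\zeta+\tfrac12\left(\ad_{\sigma_\mu}^{\dagger}(\zeta)+\ad_\zeta^{\dagger}(\sigma_\mu)-\ad_{\sigma_\mu}(\zeta)\right).
\end{equation*}
By the Leibniz rule for $\varsigma^*\nabla$ this reduces to the value at $\varsigma(x)$ of $\nabla_{\bar B_\beta}\bar B_\alpha$ (which is again right-invariant, since $R_g$ is an isometry), and I would obtain this value from the Koszul formula: for a right-invariant metric the Levi--Civita connection restricted to right-invariant vector fields corresponds, under $\bar B_\alpha\circ\varsigma\leftrightarrow B_\alpha$, to the bilinear map $(\xi,\eta)\mapsto\tfrac12\bigl(-\ad_\xi\eta+\ad_\xi^{\dagger}\eta+\ad_\eta^{\dagger}\xi\bigr)$ on $\mathfrak g$, the signs arising from $[\bar\xi,\bar\eta]=-\overline{[\xi,\eta]}$. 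This is the direction-by-direction, multivariable analogue of the computation in \cite[\S 3.2]{GaHoMeRaVi2012}.

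Granting this identity, an induction on $j$ starting from $\xi_\mu^0=\sigma_\mu$ (which corresponds to $\partial_\mu\varsigma$) shows $dR_{\varsigma^{-1}}\bigl(\nabla^{j}(\partial_\mu\varsigma)/\partial(x^\mu)^{j}\bigr)=\xi_\mu^{j}$, each step being precisely the recursion in the statement. Since $R_{\varsigma^{-1}}$ is an isometry, $\bigl\|\nabla^{k-1}(\partial_\mu\varsigma)/\partial(x^\mu)^{k-1}\bigr\|_{\mathbf g}^2=\|\xi_\mu^{k-1}\|_{\mathfrak g}^2$, so $L_s(j_x^k\varsigma)=\tfrac12\kappa^\mu\|\xi_\mu^{k-1}(x)\|_{\mathfrak g}^2=\hat\ell_s(j_x^{k-1}\sigma)$, i.e.\ $L_s$ descends through $\Xi_k$ to $\hat\ell_s|_{C^{k-1}_0(P)}$. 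Moreover, because the recursion only differentiates in the $x^\mu$-direction, $\xi_\mu^{k-1}(x)$ depends only on $\sigma_\mu(x),\partial_\mu\sigma_\mu(x),\dots,(\partial_\mu)^{k-1}\sigma_\mu(x)$, hence only on $j_x^{k-1}\sigma$, so $\hat\ell_s$ is a well-defined smooth function on all of $J^{k-1}C(P)$, as required of a natural extension.

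The main obstacle is establishing the displayed formula for the trivialised pullback connection; the one genuinely delicate point there is the sign bookkeeping in passing between right-invariant vector fields and $\mathfrak g$ (the Lie bracket flips sign, and consequently so does the metric adjoint of $\ad$). Once that formula is pinned down, the rest is a Leibniz computation and a routine induction.
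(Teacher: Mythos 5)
Your proposal is correct and follows essentially the same route as the paper: both rest on the identity expressing the right-trivialised pullback Levi--Civita connection as $\zeta\mapsto\partial_\mu\zeta+\tfrac12\bigl(\ad_{\sigma_\mu}^{\dagger}(\zeta)+\ad_{\zeta}^{\dagger}(\sigma_\mu)-\ad_{\sigma_\mu}(\zeta)\bigr)$, followed by induction on $j$ to get $dR_{\varsigma^{-1}}\bigl(\nabla^{j}(\partial_\mu\varsigma)/\partial(x^\mu)^{j}\bigr)=\xi_\mu^{j}$ and right-invariance of $\mathbf g$ to conclude. The only cosmetic differences are that you derive that identity from the Koszul formula (with the correct sign bookkeeping for right-invariant fields) where the paper cites it from the literature, and you argue $G$-invariance via $R_g$ being a connection-preserving isometry where the paper deduces it from the fact that the $\xi_\mu^{k-1}$ depend only on the logarithmic derivatives.
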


\begin{proof}
Given a (local) function $\varsigma:X\to G$, suppose that 
\begin{equation}\label{proof:1}
\frac{\nabla^{k-1}}{\partial(x^\mu)^{k-1}}(\partial_\mu\varsigma)=dR_\varsigma\circ\xi_\mu^{k-1},\qquad1\leq\mu\leq n.
\end{equation}
In such case, the Lagrangian \eqref{eq:lagrangianksplines} may be written as
\begin{equation*}
L_s\left(j_x^k\varsigma\right)=\frac{1}{2}\kappa^\mu\left\|\left(dR_{\varsigma(x)}\right)_e\left(\xi_\mu^{k-1}(x)\right)\right\|_\mathbf g^2=\frac{1}{2}\kappa^\mu\left\|\xi_\mu^{k-1}(x)\right\|_{\mathfrak g}^2,
\end{equation*}
where right invariance of $\mathbf g$ has been used. It is now clear that $L$ is right invariant, since the maps $\xi_\mu^{k-1}$ do not depend on the value of $\varsigma$, but only on its derivatives (regarded as elements of the Lie algebra). By recalling that $\sigma=dR_{\varsigma^{-1}}\circ d\varsigma$, i.e., $\sigma_\mu=dR_{\varsigma^{-1}}\circ\partial_\mu\varsigma$, the reduced Lagrangian reads
\begin{equation*}
\ell_s\left(j_x^{k-1}\sigma\right)=L_s\left(j_x^k\varsigma\right)=\frac{1}{2}\kappa^\mu\left\|\xi_\mu^{k-1}(x)\right\|_{\mathfrak g}^2.
\end{equation*}
Although the reduced Lagrangian is computed for elements $j_x^{k-1}\sigma\in C^{k-1}_0(P)$, it can extended straightforwardly to the whole jet, $\hat\ell_s:J^{k-1}C(P)\to\mathbb R$, by taking arbitrary functions $\sigma_\mu:X\to\mathfrak g$, $1\leq\mu\leq n$.

Lastly, let us check \eqref{proof:1} by induction. For $k=1$ it is straightforward. Now let $k>1$. The formula for the Levi--Civita covariant derivative of the right-invariant metric $\mathbf g$ (cf. \cite[Eq. (3.13)]{GaHoMeRaVi2012} or \cite[\S 46.5]{KrMi1997}), together with the definition of pull-back connection, yield
\begin{equation*}
\varsigma^*\nabla_{\partial_\mu}\varsigma^*U=\varsigma^*(\nabla_{d\varsigma(\partial_\mu)}U)=dR_\varsigma\circ\left(d\phi_U\circ\varsigma_\mu+\frac{1}{2}\ad_{\sigma_\mu}^{\normalfont\dagger}(\phi_U)+\frac{1}{2}\ad_{\phi_U}^{\normalfont\dagger}(\sigma_\mu)-\frac{1}{2}\ad_{\sigma_\mu}(\phi_U)\right)\circ\varsigma
\end{equation*}
for each $U\in\mathfrak X(G)$, where $\phi_U\in C^\infty(G,\mathfrak g)$ is given by $\phi_U(g)=(dR_{g^{-1}})_g(U(g))$, $g\in G$. By using this and the induction hypothesis, we finish:
\begin{align*}
\frac{\nabla^{k-1}\ }{\partial(x^\mu)^{k-1}}(\partial_\mu\varsigma) & =\varsigma^*\nabla_{\partial_\mu}\left(\frac{\nabla^{k-2}\ }{\partial(x^\mu)^{k-2}}(\partial_\mu\varsigma)\right)=\varsigma^*\nabla_{\partial_\mu}\left(dR_\varsigma\circ\xi_\mu^{k-2}\right)\\
& =dR_\varsigma\circ\left(\partial_\mu\xi_\mu^{k-2}+\frac{1}{2}\ad_{\sigma_\mu}^{\normalfont\dagger}\left(\xi_\mu^{k-2}\right)+\frac{1}{2}\ad_{\xi_\mu^{k-2}}^{\normalfont\dagger}\left(\sigma_\mu\right)-\frac{1}{2}\ad_{\sigma_\mu}\left(\xi_\mu^{k-2}\right)\right)=dR_\varsigma\circ\xi_\mu^{k-1}.
\end{align*}
\end{proof}

Let us compute explicitly the reduced equations for $k=2$. This corresponds to the minimization of the accelerations, which in specific applications is directly related with the fuel expenditure of trajectories (cf. \cite{HuBl2004}). In such case, for each $\sigma=\sigma_\mu dx^\mu\in\Omega^1(X,\mathfrak g)$, it is easy to check that $\xi_\mu^1=\partial_\mu\sigma_\mu+\ad_{\sigma_\mu}^{\normalfont\dagger}\left(\sigma_\mu\right)$, $1\leq\mu\leq n$.

\begin{theorem}
The Euler--Poincaré field equations for \eqref{eq:reducedlagrangianksplines} with $k=2$, i.e.,
\begin{equation*}
\hat\ell_s\left(j_x^1\sigma\right)=\frac{1}{2}\kappa^\mu\left\|\partial_\mu\sigma_\mu+\ad_{\sigma_\mu}^{\normalfont\dagger}\left(\sigma_\mu\right)\right\|_{\mathfrak g}^2,\qquad j_x^1\sigma\in J^1C(P),
\end{equation*}
are given by
\begin{equation}\label{eq:EPksplines}
\kappa^\mu\left(\partial_\mu-\ad_{\sigma_\mu}^{\normalfont\dagger}\right)\left(\ad_{\eta_\mu }^{\normalfont\dagger}(\sigma_\mu)+\ad_{\eta_\mu }(\sigma_\mu)+\partial_\mu\eta_\mu \right)=0,
\end{equation}
where $\eta_\mu=\partial_\mu\sigma_\mu+\ad_{\sigma_\mu}^{\normalfont\dagger}(\sigma_\mu)$ for each $1\leq\mu\leq n$.
\end{theorem}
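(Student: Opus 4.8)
The plan is to reduce the problem to Corollary~\ref{corollary:localequations} applied with $k=2$ and with ${A}_0$ the canonical flat connection on the trivial bundle $\pi_{P,X}$; for this choice $\overline{C^1_0(P)}=C^1_0(P)$, the induced linear connection is the trivial one, and $\textrm{\normalfont div}^{{A}_0}$ is the coordinate divergence of Lemma~\ref{lemma:divergencecoadjoint}, so that the whole argument comes down to computing the two fiber derivatives of the extended reduced Lagrangian. Writing $\sigma=\sigma_\mu\,dx^\mu$ with $\sigma_\mu=\sigma_\mu^\alpha B_\alpha$, and recalling that for $k=2$ the extension \eqref{eq:reducedlagrangianksplines} equals $\hat\ell_s(j^1_x\sigma)=\tfrac{1}{2}\kappa^\mu\,\mathbf g(\eta_\mu,\eta_\mu)$ with $\eta_\mu=\xi^1_\mu=\partial_\mu\sigma_\mu+\ad_{\sigma_\mu}^{\normalfont\dagger}(\sigma_\mu)$, I note that in the adapted coordinates the summand indexed by $\mu$ depends only on $A_\mu^\alpha\leftrightarrow\sigma_\mu^\alpha$ and on $A_{\mu,1_\mu}^\alpha\leftrightarrow\partial_\mu\sigma_\mu^\alpha$.

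First I would record the ``velocity'' derivative: since the first jet coordinates enter $\hat\ell_s$ only through $\eta_\mu$, which contains no first derivative other than $\partial_\mu\sigma_\mu$, one has $\partial\hat\ell_s/\partial A_{\mu,1_\nu}^\alpha=0$ for $\nu\neq\mu$, while $\partial\hat\ell_s/\partial A_{\mu,1_\mu}^\alpha=\kappa^\mu\,\mathbf g(\eta_\mu,B_\alpha)$ (no sum on $\mu$). For the ``position'' derivative I would differentiate the quadratic term $\ad_{\sigma_\mu}^{\normalfont\dagger}(\sigma_\mu)$ in $\eta_\mu$ and then use repeatedly the defining identity $\mathbf g(\ad_\zeta\theta,\chi)=\mathbf g(\theta,\ad_\zeta^{\normalfont\dagger}\chi)$ together with the antisymmetry of the bracket (in particular $\ad_{\sigma_\mu}\sigma_\mu=0$) to collapse the several contributions into $\partial\hat\ell_s/\partial A_\mu^\alpha=-\kappa^\mu\,\mathbf g\bigl(\ad_{\eta_\mu}^{\normalfont\dagger}(\sigma_\mu)+\ad_{\eta_\mu}(\sigma_\mu),\,B_\alpha\bigr)$ (no sum on $\mu$).

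Then I would substitute these two formulas into Corollary~\ref{corollary:localequations}. Only the terms $|J|=0$ and $J=1_\mu$ contribute, so the $(\mu,\alpha)$-component of $(j^{2}\sigma)^*\mathcal{EL}(\hat{\mathfrak l})$ equals $E_\mu^\alpha=\kappa^\mu\,\mathbf g(p_\mu,B_\alpha)$ with $p_\mu=-\bigl(\ad_{\eta_\mu}^{\normalfont\dagger}(\sigma_\mu)+\ad_{\eta_\mu}(\sigma_\mu)+\partial_\mu\eta_\mu\bigr)$, and the reduced equations read $\partial_\mu E_\mu^\alpha-c_{\beta\alpha}^\gamma\sigma_\mu^\beta E_\mu^\gamma=0$. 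Contracting with an arbitrary $\xi=\xi^\alpha B_\alpha$, using $c_{\beta\alpha}^\gamma\sigma_\mu^\beta\xi^\alpha B_\gamma=[\sigma_\mu,\xi]$ and then $\mathbf g(p_\mu,[\sigma_\mu,\xi])=\mathbf g(\ad_{\sigma_\mu}^{\normalfont\dagger}p_\mu,\xi)$, the equation becomes $\kappa^\mu\,\mathbf g\bigl((\partial_\mu-\ad_{\sigma_\mu}^{\normalfont\dagger})p_\mu,\,\xi\bigr)=0$ for every $\xi$; by nondegeneracy of $\mathbf g$ and the definition of $p_\mu$ this is exactly \eqref{eq:EPksplines}.

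The obstacle is essentially bookkeeping rather than conceptual: carrying out the two fiber-derivative computations carefully, tracking which (multi-)indices are summed --- especially the fact that the splines Lagrangian depends on the first jet only ``diagonally'' ($\nu=\mu$), so no cross terms $\partial_\nu\sigma_\mu$ appear --- and repeatedly applying the metric-adjoint identity and $\ad_\zeta\zeta=0$ to compress the $\ad$ and $\ad^{\normalfont\dagger}$ terms into the compact combination $\ad_{\eta_\mu}^{\normalfont\dagger}(\sigma_\mu)+\ad_{\eta_\mu}(\sigma_\mu)$. One must also keep in mind that the canonical connection identifies $C(P)\simeq T^*X\otimes\ad(P)$ and $\mathbf g$ identifies $\mathfrak g\simeq\mathfrak g^*$, so that the coadjoint operator appearing in Theorem~\ref{theorem:reducedequations}(iv) becomes $-\ad^{\normalfont\dagger}$; this is the origin of the minus sign in front of $\ad_{\sigma_\mu}^{\normalfont\dagger}$ in \eqref{eq:EPksplines}.
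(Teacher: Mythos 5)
Your proposal is correct and follows essentially the same route as the paper: compute the two fiber derivatives of $\hat\ell_s$ (obtaining $\delta^{\mu\nu}\kappa^\mu\eta_\mu^\flat$ for the jet variables and $-\kappa^\mu\bigl(\ad_{\eta_\mu}^*(\sigma_\mu^\flat)+\ad_{\eta_\mu}(\sigma_\mu)^\flat\bigr)$ for the fiber variables via the identity $\ad_\xi^{\dagger}(\zeta)=\ad_\xi^*(\zeta^\flat)^\sharp$), assemble the Euler--Lagrange form, and apply $\textrm{\normalfont div}^{{A}_0}-\ad_{\overline\sigma_{A}}^*$ for the canonical connection. The only cosmetic difference is that you contract against an arbitrary test element $\xi$ and work with $\ad^{\dagger}$ throughout instead of passing explicitly through $\flat$ and $\sharp$ as the paper does.
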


\begin{proof}
Let $\sigma=\sigma_\mu dx^\mu\in\Omega^1(X,\mathfrak g)$. The musical isomorphisms defined by $\mathbf g$ are denoted by $\sharp:\mathfrak g^*\to\mathfrak g$ and $\flat:\mathfrak g\to\mathfrak g^*$. For each $1\leq\mu,\nu\leq n$, it can be easily checked that 
\begin{equation}\label{proofksplines1}
\frac{\partial\hat\ell_s}{\partial(\partial_\nu\sigma_\mu)}\left(j^1\sigma\right)=\delta^{\mu\nu}\kappa^\mu\eta_\mu ^\flat,\qquad\partial_\nu\left(\frac{\partial\hat\ell_s}{\partial(\partial_\nu\sigma_\mu)}\left(j^1\sigma\right)\right)=\kappa^\mu\partial_\mu\eta_\mu ^\flat.
\end{equation}
As straightforward computation shows that $\ad_\xi^{\normalfont\dagger}(\zeta)=\ad_\xi^*(\zeta^\flat)^\sharp$ for each $\xi,\zeta\in\mathfrak g$. Thus, for each $1\leq\mu\leq n$ we have
\begin{align}\label{proofksplines2}
\frac{\partial\hat\ell_s}{\partial\sigma_\mu}\left(j^1\sigma\right) & =\kappa^\mu\mathbf g\left(\eta_\mu ,\ad^{\normalfont\dagger}(\sigma_\mu)+\ad_{\sigma_\mu}^{\normalfont\dagger}(\cdot)\right)\\\nonumber
& =\kappa^\mu\left(\mathbf g\left(\ad(\eta_\mu ),\sigma_\mu\right)+\mathbf g\left(\ad_{\sigma_\mu}(\eta_\mu ),\cdot\right)\right)=\kappa^\mu\left(-\mathbf g\left(\ad_{\eta_\mu }(\cdot),\sigma_\mu\right)-\mathbf g\left(\ad_{\eta_\mu}(\sigma_\mu ),\cdot\right)\right)\\\nonumber
& =-\kappa^\mu\mathbf g\left(\ad_{\eta_\mu }^{\normalfont\dagger}(\sigma_\mu)+\ad_{\eta_\mu}(\sigma_\mu ),\cdot\right)=-\kappa^\mu\left(\ad_{\eta_\mu }^*(\sigma_\mu^\flat)+\ad_{\eta_\mu }(\sigma_\mu)^\flat\right).
\end{align}
By gathering \eqref{proofksplines1} and \eqref{proofksplines2}, the Euler--Lagrange form of $\hat{\mathfrak l}_s=\hat\ell_s\,d^n x:J^1 C(P)\to\bigwedge^n T^*X$ is readily seen to be
\begin{align}\label{proofksplines3}
\left(j^2\sigma\right)^*\mathcal{EL}\left(\hat{\mathfrak l}_s\right) & =\left(\frac{\partial\hat\ell_s}{\partial\sigma_\mu}\left(j^1\sigma\right)-\partial_\nu\left(\frac{\partial\hat\ell_s}{\partial(\partial_\nu\sigma_\mu)}\left(j^1\sigma\right)\right)\right)d^n x\otimes\partial_\mu\\\nonumber
& =-\kappa^\mu\left(\ad_{\eta_\mu }^*(\sigma_\mu^\flat)+\ad_{\eta_\mu }(\sigma_\mu)^\flat+\partial_\mu\eta_\mu ^\flat\right)d^n x\otimes\partial_\mu\in\Omega^n(X,TX\otimes\mathfrak g^*).
\end{align}

On the other hand, since we have chosen ${A}_0$ as the canonical connection, we obtain the standard divergence, that is,
\begin{equation*}
\operatorname{div}^{{A}_0}:\Gamma(\pi_{TX\otimes\mathfrak g^*,X})\to C^\infty(X,\mathfrak g),\quad\eta_\alpha^\mu\,\partial_\mu\otimes B^\alpha\mapsto(\partial_\mu\eta_\alpha^\mu)B^\alpha,
\end{equation*}
where $\{B_\alpha\in\mathfrak g\mid1\leq\alpha\leq m\}$ is a basis of $\mathfrak g$ and $\{B^\alpha\in\mathfrak g^*\mid1\leq\alpha\leq m\}$ is its dual basis. In the same fashion, we have
\begin{equation*}
\ad_\sigma^*:\Gamma\left(\pi_{TX\otimes\mathfrak g^*,X}\right)\to C^\infty(X,\mathfrak g^*),\quad\eta_\alpha^\mu\partial_\mu\otimes B^\alpha\mapsto\eta_\alpha^\mu\,\ad_{\sigma_\mu}^*(B^\alpha).
\end{equation*}
From this and \eqref{proofksplines3}, we obtain the Euler--Poincaré field equations for $\hat\ell_s$,
\begin{align*}
0 & =\kappa^\mu\left(\operatorname{div}^{{A}_0}-\ad_\sigma^*\right)\left(\ad_{\eta_\mu }^*(\sigma_\mu^\flat)+\ad_{\eta_\mu }(\sigma_\mu)^\flat+\partial_\mu\eta_\mu ^\flat\right)d^n x\otimes\partial_\mu\\
& =\kappa^\mu\left(\partial_\mu-\ad_{\sigma_\mu}^*\right)\left(\ad_{\eta_\mu }^*(\sigma_\mu^\flat)+\ad_{\eta_\mu }(\sigma_\mu)^\flat+\partial_\mu\eta_\mu ^\flat\right)d^n x.
\end{align*}
By applying $\sharp$, we conclude.
\end{proof}

\begin{corollary}
If the metric $\mathbf g$ is bi-invariant, then the Euler--Poincaré field equations for $2$-splines \eqref{eq:EPksplines} become
\begin{equation*}
\kappa^\mu\left(\partial_\mu^3\sigma_\mu+\left[\sigma_\mu,\partial_\mu^2\sigma_\mu\right]\right)=0.
\end{equation*}
\end{corollary}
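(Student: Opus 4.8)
The plan is to feed the general Euler--Poincaré equations \eqref{eq:EPksplines} the single structural consequence of bi-invariance and watch almost every term collapse. Recall that a right-invariant metric $\mathbf g$ on $G$ is bi-invariant if and only if it is also left-invariant, which is equivalent to $\ad_\xi$ being skew-symmetric with respect to $\mathbf g$ for every $\xi\in\mathfrak g$; in terms of the adjoint introduced before the statement this reads $\ad_\xi^{\normalfont\dagger}=-\ad_\xi$. I would open the proof by recording this identity, citing the standard fact (e.g. \cite{KrMi1997}) or deriving it in one line from differentiating $\mathbf g(\Ad_g\eta,\Ad_g\zeta)=\mathbf g(\eta,\zeta)$.

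Next I would substitute $\ad^{\normalfont\dagger}=-\ad$ into the data of the theorem. First, $\eta_\mu=\partial_\mu\sigma_\mu+\ad_{\sigma_\mu}^{\normalfont\dagger}(\sigma_\mu)=\partial_\mu\sigma_\mu-[\sigma_\mu,\sigma_\mu]=\partial_\mu\sigma_\mu$, since $[\sigma_\mu,\sigma_\mu]=0$. Second, the inner expression appearing in \eqref{eq:EPksplines} simplifies as $\ad_{\eta_\mu}^{\normalfont\dagger}(\sigma_\mu)+\ad_{\eta_\mu}(\sigma_\mu)=-\ad_{\eta_\mu}(\sigma_\mu)+\ad_{\eta_\mu}(\sigma_\mu)=0$, so the bracketed quantity reduces to $\partial_\mu\eta_\mu=\partial_\mu^2\sigma_\mu$. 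Third, the outer operator becomes $\partial_\mu-\ad_{\sigma_\mu}^{\normalfont\dagger}=\partial_\mu+\ad_{\sigma_\mu}$, so \eqref{eq:EPksplines} turns into $\kappa^\mu\bigl(\partial_\mu(\partial_\mu^2\sigma_\mu)+\ad_{\sigma_\mu}(\partial_\mu^2\sigma_\mu)\bigr)=\kappa^\mu\bigl(\partial_\mu^3\sigma_\mu+[\sigma_\mu,\partial_\mu^2\sigma_\mu]\bigr)=0$, which is exactly the claimed form.

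There is essentially no obstacle here: the corollary is a direct specialization, and the only point that deserves a sentence is the equivalence "bi-invariant $\iff$ $\ad$ is $\mathbf g$-skew," together with the trivial vanishing of $[\sigma_\mu,\sigma_\mu]$. I would keep the proof to three or four lines, just enough to make the substitutions explicit and to note that it serves as a consistency check on Theorem~\ref{theorem:reducedequations} and its spline corollary.
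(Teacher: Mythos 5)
Your proof is correct and follows exactly the paper's route: both reduce everything to the identity $\ad_\xi^{\normalfont\dagger}=-\ad_\xi$ for a bi-invariant metric and then substitute into \eqref{eq:EPksplines}. The paper states only that $\eta_\mu=\partial_\mu\sigma_\mu$ and "we finish," while you spell out the cancellation $\ad_{\eta_\mu}^{\normalfont\dagger}(\sigma_\mu)+\ad_{\eta_\mu}(\sigma_\mu)=0$ and the sign flip in the outer operator explicitly, which is the same argument written in full.
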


\begin{proof}
For a bi-invariant metric, we have $\ad_\xi^\dagger(\zeta)=-\ad_\xi(\zeta)$ for each $\xi,\eta\in\mathfrak g$. Therefore, $\eta_\mu=\partial_\mu\sigma_\mu$ for each $1\leq\mu\leq n$ and we finish.
\end{proof}

\begin{remark}[Elastica]
The incorporation of elastic terms to the Lagrangian for multivariate $k$-splines is motivated by optimal control problems where the velocities (first order derivatives) have to be minimized together with their accelerations \cite{NoHePa1989,CaSiCr1995,HuBl2004}. In such case, the Lagrangian is given by
\begin{equation}\label{eq:lagrangianelastica}
L_e\left(j_x^k\varsigma\right)=L_s\left(j_x^k\varsigma\right)+\frac{1}{2}\tau^\mu\left\|\partial_\mu\varsigma\right\|_{\mathbf g}^2,\qquad j_x^k\varsigma\in J^k(X,G),
\end{equation}
for certain $\tau^\mu\in\mathbb R$, $1\leq\mu\leq n$, with $L_s:J^k(X,G)\to\mathbb R$ as in \eqref{eq:lagrangianksplines}. The reduced Lagrangian, as well as the reduced equations, are computed by adding the extra term in the above results. Namely, it is clear that
\begin{equation*}
\hat\ell_e\left(j_x^{k-1}\sigma\right)=\hat\ell_s\left(j_x^{k-1}\sigma\right)+\frac{1}{2}\tau^\mu\left\|\sigma_\mu\right\|_{\mathfrak g}^2,\qquad j_x^{k-1}\sigma\in J^{k-1}C(P),
\end{equation*}
with $\hat\ell_s:J^{k-1}C(P)\to\mathbb R$ as in \eqref{eq:reducedlagrangianksplines}. The corresponding Euler--Poincaré field equations for $k=2$ are given by
\begin{equation*}\label{eq:EPelastica}
\kappa^\mu\left(\partial_\mu-\ad_{\sigma_\mu}^{\normalfont\dagger}\right)\left(\ad_{\eta_\mu }^{\normalfont\dagger}(\sigma_\mu)+\ad_{\eta_\mu }(\sigma_\mu)+\partial_\mu\eta_\mu \right)=\tau^\mu\left(\partial_\mu-\ad_{\sigma_\mu}^\dagger\right)\sigma_\mu.
\end{equation*}
Lastly, when the metric $\mathbf g$ is bi-invariant, the previous equations boil down to
\begin{equation}\label{eq:EPelasticabi}
\kappa^\mu\left(\partial_\mu^3\sigma_\mu+\left[\sigma_\mu,\partial_\mu^2\sigma_\mu\right]\right)=\tau^\mu\partial_\mu\sigma_\mu.
\end{equation}
\end{remark}

\subsection*{Funding}
MCL and ARA have been partially supported by Ministerio de Ciencia e Innovación (Spain) under grants PGC2018-098321-B-I00 and PID2021-126124NB-I00. ARA has been supported by Ministerio de Universidades (Spain) under an FPU grant.

\bibliographystyle{plainnat}
\bibliography{biblio.bib}

\end{document}